 \def\graycolor{gray!60}
 \newcommand\bH{\mathbf{H}}
 \renewcommand\ss{{\rm ss}}
 \newcommand\hol{\mathfrak{hol}}
 \newcommand\bi{{\mathbf i}}
 \newcommand\bj{{\mathbf j}}
 \newcommand\bk{{\mathbf k}}
 \newenvironment{psm}
  {\left(\begin{smallmatrix}}
  {\end{smallmatrix}\right)}
 \newcommand\sym{\operatorname{sym}}
 \newcommand\ev{\operatorname{ev}}
 \newcommand\ffr{\mathfrak{f}_\bbR}
 \newcommand\CD{\widetilde{D}}
 \newcommand\CE{\widetilde{E}}
 \newcommand\CF{\widetilde{F}}
 \newcommand\inp[2]{(#1,#2)}
 \newcommand\Jac{\operatorname{Jac}}
 \newcommand\Gtwo[2]{\scalebox{0.75}{\Gdd{#1}{#2}}}
 \renewcommand\Aut{\operatorname{Aut}}
 \newcommand\bR{{\mathbf R}}
 \newcommand\GS{G_2^{\mbox{\tiny\rm split}}}
\newtheorem{theorem}{Theorem}[section]
\newtheorem{lemma}[theorem]{Lemma}
\newtheorem{cor}[theorem]{Corollary}
\newtheorem{prop}[theorem]{Proposition}
\theoremstyle{defn}
\newtheorem{defn}[theorem]{Definition}
\newtheorem{example}[theorem]{Example}
\theoremstyle{remark}
\numberwithin{equation}{section}
\title[A Cartan-theoretic classification of multiply-transitive $(2,3,5)$-distributions]
      {A Cartan-theoretic classification of\\ multiply-transitive $(2,3,5)$-distributions}
\author[D.~The]{Dennis~The}
\date{\today}
\address{Department of Mathematics \& Statistics, UiT The Arctic University of Norway, N-9037, Troms\o, Norway}
\email{dennis.the@uit.no}
\subjclass[2000]{Primary: 53Bxx, 58A30, 58J70; Secondary: 17B10, 22E46}
\keywords{Symmetry, $(2,3,5)$-distribution, homogeneous, Cartan connection}
\begin{document}
\maketitle


 \begin{abstract}
 In his 1910 paper, \'Elie Cartan gave a tour-de-force solution to the (local) equivalence problem for generic rank 2 distributions on 5-manifolds, i.e.\ $(2,3,5)$-distributions.  From a modern perspective, these structures admit equivalent descriptions as (regular, normal) parabolic geometries modelled on a quotient of $G_2$, but this is not transparent from his article: indeed, the Cartan ``connection'' of 1910 is {\em not} a ``Cartan connection'' in the modern sense.  We revisit the classification of multiply-transitive $(2,3,5)$-distributions from a modern Cartan-geometric viewpoint, incorporating $G_2$ structure theory throughout, obtaining: (i) the complete (local) classifications in the complex and real settings, phrased ``Cartan-theoretically'', and (ii) the full curvature and infinitesimal holonomy of these models.  Moreover, we Cartan-theoretically prove exceptionality of the $3:1$ ratio for two 2-spheres rolling on each other without twisting or slipping, yielding a $(2,3,5)$-distribution with symmetry  the Lie algebra of the split real form of $G_2$.
 \end{abstract}
 

 \section{Introduction}
 \label{S:intro}
 
 Our aim in this article is to present the symmetry classification of a classical geometric structure from a modern {\em Cartan-geometric} perspective.  The geometry of {\sl $(2,3,5)$-distributions} is an ideal setting for illustrating the utility of such techniques, owing to its complexity yet relative low-dimensionality, and its relevance because of connections to control theory, sub-Riemannian geometry, conformal geometry, special holonomy, and the geometry of PDE \cite{Yam1999, Nur2005, Zel2006, HS2009, Willse2014, SW2017a}.  It is a prototype geometry among the much broader class of {\sl parabolic geometries}, and the reader should bear in mind that the techniques illustrated here can be similarly applied to other geometries in this class, e.g.\ projective and conformal geometries.

 On a 5-manifold $M$, a {\sl $(2,3,5)$-distribution} is a subbundle $\cD \subset T M$  satisfying the following genericity conditions with respect to the Lie bracket: the derived distribution $[\cD,\cD]$ has rank 3, and then $[\cD,[\cD,\cD]]$ has rank 5.  Given two {\sl $(2,3,5)$-geometries} $(M,\cD)$ and $(\tilde{M},\tilde\cD)$, they are (locally) equivalent if there exists a (local) diffeomorphism $\phi : M \to \tilde{M}$ with $\phi_* \cD = \tilde\cD$, and an (infinitesimal) symmetry of $(M,\cD)$ is a vector field $X \in \fX(M)$ such that $\cL_X \cD \subset \cD$.  Letting $\sym(\cD)$ be the symmetry algebra, $(M,\cD)$ is {\sl transitive} (or {\sl homogeneous}) if the evaluation map $\ev_o : \sym(\cD) \to T_o M$ is surjective at all points $o \in M$.  In this case, it is {\sl multiply-transitive} if $\dim \sym(\cD) > 5$, and {\sl simply-transitive} if $\dim \sym(\cD) = 5$.  Our study is entirely local in nature, and we will focus on the multiply-transitive case in this article.
 
 The study of $(2,3,5)$-distributions has a long history, originating in 1910 with \'Elie Cartan's celebrated ``5-variables'' paper \cite{Car1910}.  Indeed, Cartan was studying certain classes of nonlinear second order PDE in the plane and arrived at $(2,3,5)$-distributions through remarkable correspondence theorems.  (We will not discuss these PDE aspects further here -- see \cite{Str2009, Yam1999, The2018} for recent reviews and extensions of this story.)  He then studied the local equivalence problem for $(2,3,5)$-geometries $(M,\cD)$ through his masterful use of what is now known as the {\sl Cartan equivalence method}.  Main results include:
 \begin{enumerate}
 \item He constructed a canonical coframing $\omega$ (``absolute parallelism'' / ``connection'') on a 14-dimensional bundle $\cG \to M$.  Thus, any $(2,3,5)$-geometry has at most 14-dimensional symmetry.
 \item There exists a fundamental curvature tensor for any $(2,3,5)$-geometry, which takes the form of a binary quartic, called the {\sl Cartan quartic}, and its root type is a pointwise algebraic invariant.  Label these by $\sfO$ (trivial), $\sfN$ (quadruple root), $\sfD$ (pair of double roots), $\mathsf{III}$ (triple root), $\mathsf{II}$ (double root and two simple roots), $\mathsf{I}$ (generic).  Type $\sfO$ is precisely the case of maximal symmetry.
 \item Augmenting the above types by their symmetry dimensions, Cartan classified (over $\bbC$) the $\sfO.14$, $\sfN.7$ and $\sfD.6$ branches, which was claimed to cover all multiply-transitive cases.  However, models in the $\sfN.6$ branch were discovered by Strazzullo \cite[Model 6.7.2]{Str2009} and Doubrov--Govorov \cite{DG2013}, and these are equivalent \cite[Remark 1]{DG2013}.  Although \cite[Remark 3]{DG2013} claimed completeness of the above list, to our knowledge no argument has previously appeared in print.
 \end{enumerate} 
 
 From a modern viewpoint, $(2,3,5)$-distributions are prototypical examples of {\sl parabolic geometries} \cite{CS2009}.  More precisely, there is a ``fundamental theorem of $(2,3,5)$-geometry'' asserting a categorical equivalence between $(2,3,5)$-distributions on $M$ and {\sl regular, normal} parabolic geometries $(\cG \to M, \omega)$ of type $(G_2,P)$, where $G_2$ is the 14-dimensional exceptional simple Lie group (take the split real form $\GS$ over $\bbR$), and $P \leq G_2$ is a certain (9-dimensional) parabolic subgroup.  Here, $\cG \to M$ is a $P$-principal bundle and $\omega$ is a {\sl Cartan connection}, i.e.\ a coframing that is in particular {\em equivariant} with respect to the structure group $P$.  However, a striking fact that we learned in late-2013 (and paraphrase here) is that:
 \begin{align} \label{E:Cartan}
 \framebox{{\bf The Cartan ``connection'' (of \cite{Car1910}) is {\em not} a ``Cartan connection''.}}
 \end{align}
 Indeed, Cartan's coframing is {\em not} fully $P$-equivariant, but this is easily missed by a modern reader of \cite{Car1910} since the full coframe structure equations were not stated.  (See further discussion in \cite[\S 14]{McK2022}.)  To our knowledge, \eqref{E:Cartan} was first observed by Pawel Nurowski, but even this is slightly obscured in the literature, so we now attempt to clarify.  Well-aware of Tanaka's work and modern developments on Cartan geometries, Nurowski found a (normalized) Cartan connection for $(2,3,5)$-geometries in \cite[Thm.8]{Nur2005}.  Nurowski (generously) attributed the structure equations to Cartan, but a comparison shows that although \cite[(50)]{Nur2005} and \cite[(5) in \S31]{Car1910} agree, \cite[(51)]{Nur2005} and \cite[(8) in \S32]{Car1910} are different.
 
 In 2013, we were present at a lunch discussion at the Australian National University, where Nurowski posed the question to Robert Bryant about modifying Cartan's coframing to obtain a Cartan connection.  Bryant subsequently found this modification and communicated it in an illuminating email \cite{Bry2013}, from which the following is an extract:\\

\setlength{\leftskip}{0.5cm}
\setlength{\rightskip}{0.5cm}

``{\em My conclusion is that Cartan, at the time that he wrote his 5-variables paper, was
more interested in practical calculation and exposition than he was in a theoretical
development of what later became Cartan connections, and he chose his coframing
solving his equivalence problem so that the formulae that he needed to write down to
explain his results would be as simple as possible.''}\\
 
 \setlength{\leftskip}{0pt}
 \setlength{\rightskip}{0pt}

 The article \cite{Car1910} is full of interesting ideas and has strongly influenced methods of studying equivalence and symmetry in the 20th century.  Nevertheless, when a Cartan connection exists, our contention is that there are strong technical benefits for using it.  Historically, the general theory of Cartan connections came much later\footnote{To our knowledge, the earliest appearance of the modern Cartan connection definition appears to be \cite[p.218]{KN1964}.}, and Cartan geometry has developed significantly since its inception, so we feel it is important to draw upon current tools to complete and communicate aspects of the $(2,3,5)$ story for a modern audience.
 
 Beyond this technical prelude, let us further clarify motivations for our study.  It is important to emphasize three {\em equivalent} perspectives on homogeneous $(2,3,5)$-structures:
 \begin{align}
 \framebox{Coordinate} \,\,\longleftrightarrow\,\,
 \framebox{Lie-theoretic} \,\,\longleftrightarrow\,\,
 \framebox{Cartan-theoretic} 
 \end{align}
 {\bf Coordinate models} are the most concrete and familiar manner of presenting a given structure.  It is well-known that any $(2,3,5)$-geometry $(M,\cD)$ can always be put in {\sl Monge form}, i.e.\ there exist coordinates $(x,y,p,q,z)$ on $M$ and some {\sl Monge function} $F = F(x,y,p,q,z)$ with $F_{qq} \neq 0$ everywhere such that
 \begin{align}
 \cD = \cD_F := \langle \partial_x + p\partial_y + q\partial_p + F \partial_z, \, \partial_q \rangle.
 \end{align}
 The local classification of complex multiply-transitive models \cite{Car1910, DG2013, DK2014, Willse2019} is then given in Table \ref{F:MongeModels}.  We also refer the reader to the recent classifications in \cite{Zhi2021} (not in Monge form).

 \begin{table}[h]
 \[
 \begin{array}{|c|c|c|c|} \hline
 \mbox{Type} & \mbox{Monge function $F$} & \mbox{Parameter range} & \mbox{Classifying invariant}\\ \hline\hline
 \sfO & q^2 & \cdot & \cdot\\ \hline
 \sfN.7 & q^{(k+1)/2}, \quad \log(q) & k \in \bbC \backslash \{ \pm \frac{1}{3}, \pm 1, \pm 3\} &{\bf I}^2 = \frac{(k^2+1)^2}{(k^2-9)(\frac{1}{9} - k^2)} \\ \hline
 \sfN.6 & q^{1/3} + y & \cdot & \cdot\\ \hline
 \sfD.6 & y^{(3\ell-1)/2} q^{(\ell+1)/2},\quad \frac{q^2}{p^2},\quad p^2 + q\log(q) & \ell \in \bbC \backslash \{ 0, \pm \frac{1}{3}, \pm 1,  \pm 3 \} & \ell^2\\ \hline
 \end{array}
 \]
 \caption{Local classification of complex multiply-transitive $(2,3,5)$-distributions}
 \label{F:MongeModels}
 \end{table}

 More abstract, but still familiar, is the {\bf Lie-theoretic} description, namely as a filtered Lie algebra $\ff = \ff^{-3} \supset ... \supset \ff^0 \supset ... \supset \ff^3$ with $\ff / \ff^0$ endowed with an $\ff^0$-invariant filtration of $(2,3,5)$-type\footnote{For non-flat homogeneous $(2,3,5)$-models, the filtration is shorter, since $\ff^1 = \ff^2 = \ff^3 = 0$ is forced (Corollary \ref{C:PR}).}:
 \begin{align} \label{E:LieTh}
 \ff / \ff^0 = \ff^{-3} / \ff^0 \supset \ff^{-2} / \ff^0 \supset \ff^{-1} / \ff^0.
 \end{align}
The passage between these descriptions is familiar: (i) given a coordinate model, find the symmetry algebra (which is filtered via a choice of generic basepoint); (ii) given a Lie-theoretic model, locally integrate structure equations to find a coordinate model.

 However, beyond model classifications, one is often also interested in further structural questions, e.g.\
 \begin{enumerate}
 \item[(a)] What is the {\em full} Cartan curvature of the model?  (The Cartan quartic is only the fundamental (harmonic) part.)
 \item[(b)] What is the (infinitesimal) holonomy of the model?
 \item[(c)] From \cite{Nur2005}, any $(2,3,5)$-geometry $(M,\cD)$ admits a canonically associated {\sl Nurowski conformal structure} $\sfc_\cD$ on $M$. Does $\sfc_\cD$ contain an Einstein metric?
 \end{enumerate}
 Answering such questions for a model presented in a coordinate or Lie-theoretic fashion is generally highly non-trivial.  (Indeed, we are not aware of full Cartan curvature previously stated for any non-flat $(2,3,5)$-models.)  Substantial work has been done on questions (b) and (c) in \cite{Willse2014, SW2017a, SW2017b, Willse2018}.  In particular, complete results were established there for the type $\sfN$ case, and partial results were given in the $\sfD.6$ case.
 
 We will advocate for the yet more abstract, but much less known {\bf Cartan-theoretic} descriptions $(\ff;\fg,\fp)$ (Definition \ref{D:alg-model}), from which a Lie-theoretic description is immediate, for which the full Cartan curvature is inherent to the description, and the infinitesimal holonomy is quickly computed.  Moreover, we show how to systematically classify such Cartan-theoretic descriptions.  (We also exhibit the passage from Lie-theoretic to Cartan-theoretic descriptions in numerous examples.)
 
 Let us contrast our $(2,3,5)$ study with that of previous classification studies (in particular, \cite{Car1910,Willse2019,Zhi2021}):
 \begin{itemize}
 \item We take Theorem \ref{T:fundthm} as our starting point, and {\em use $G_2$ structure theory throughout our study}.  In \cite{Car1910}, $G_2$ only gets brief mentions, and is linked only with maximally symmetric models.
 \item The curvature module (Table \ref{F:CurvMod}) for arbitrary $(2,3,5)$-geometries is derived (as a $P$-module).  Corresponding Cartan connection structure equations are given in Appendix \ref{S:curvature}.
 \item For all {\em non-flat} (complex or real) multiply-transitive $(2,3,5)$-structures:
 \begin{itemize}
 \item We give Cartan-theoretic descriptions and establish completeness of the list.
 \item Full Cartan curvature and infinitesimal holonomy is obtained.
 \item Those $(M,\cD)$ for which $\sfc_\cD$ contains an Einstein metric are identified.
 \end{itemize}
 \item We Cartan-theoretically prove exceptionality of the $3:1$ ratio for 2-spheres rolling on each other without twisting/slipping, yielding a $(2,3,5)$-distribution with $\Lie(\GS)$ symmetry (Theorem \ref{T:rolling}).
 \end{itemize}
 We hope that this article serves as a thorough illustration of Cartan-theoretic approaches to homogeneous classification, and that these methods also find applicability well-beyond the $(2,3,5)$-setting.
 
 \section{Preliminaries on $(G_2,P_1)$}
 \label{S:G2data}
 \subsection{The Lie algebra $\fg = \Lie(G_2)$}
 Consider the 14-dimensional {\em complex} simple Lie algebra $\fg = \Lie(G_2)$.  This has the following matrix realization\footnote{We conjugated \cite[eq.\ (25)]{HS2009} by $\diag\left(-1,1,1,1,\begin{psmallmatrix} 0 & -1\\ -1 & 0 \end{psmallmatrix}, 1\right)$ and relabelled the diagonal to obtain \eqref{E:g2rep}.}, corresponding to the standard representation of $\fg$ on $\bbV = \bbC^7$:
 \begin{align} \label{E:g2rep}
 \begin{footnotesize}
 \begin{array}{c}
 \begin{pmatrix}
 2z_1 + z_2 & b_{10} & b_{11} & \sqrt{2} \,b_{21} & b_{31} & b_{32} & 0\\
 a_{10} & z_1 + z_2 & b_{01} & \sqrt{2}\, b_{11} & -b_{21} & 0 & -b_{32}\\
 a_{11} & a_{01} & z_1 & -\sqrt{2}\, b_{10} & 0 & b_{21} & -b_{31}\\
 \sqrt{2} \, a_{21} & \sqrt{2}\, a_{11} & -\sqrt{2}\, a_{10} & 0 & \sqrt{2}\, b_{10} & -\sqrt{2}\, b_{11} & -\sqrt{2}\,b_{21}\\
 a_{31} & -a_{21} & 0 & \sqrt{2}\, a_{10} & -z_1 & -b_{01} & -b_{11}\\
 a_{32} & 0 & a_{21} & -\sqrt{2}\, a_{11} & -a_{01} & -z_1 - z_2 & -b_{10}\\
 0 & -a_{32} & -a_{31} & -\sqrt{2}\, a_{21} & -a_{11} & -a_{10} & -2z_1 - z_2
 \end{pmatrix}
 \end{array}.
 \end{footnotesize}
 \end{align}
Let $[\cdot,\cdot]$ be the Lie bracket on $\fg$, given by restriction of the matrix commutator.
Denoting the standard basis of $\bbV$ by $\{ v_i \}$, and $\{ v^i \}$ its dual basis, one has the following $\fg$-invariant tensors\footnote{Our convention for symmetric tensor products is $vw := \half(v\otimes w + w \otimes v)$, and similarly for wedge products.}\,:
 \begin{align}
 g &= 2 (v^1 v^7 + v^2 v^6 + v^3 v^5) + v^4 v^4, \\
 \Psi &= v^1 \wedge v^4 \wedge v^7 - v^2 \wedge v^4 \wedge v^6 - v^3 \wedge v^4 \wedge v^5 + \sqrt{2} (v^1 \wedge v^5 \wedge v^6 + v^2 \wedge v^3 \wedge v^7).
 \end{align}

Differentiate \eqref{E:g2rep} with respect to $z_i, a_{st}, b_{st}$ to respectively obtain $\sfZ_i, f_{st}, e_{st}$, with brackets in Table \ref{F:G2br}.  Taking Cartan subalgebra $\fh = \langle \sfZ_1, \sfZ_2 \rangle$, and any $\alpha \in \fh^*$, let $\fg_\alpha := \{ x \in \fg : [h,x] = \alpha(h) x, \, \forall h \in \fh \}$.  The root system is $\Delta := \{ \alpha \in \fh^* \backslash \{ 0 \} : \fg_\alpha \neq 0 \}$ and the root space decomposition is $\fg = \fh \op \bigoplus_{\alpha \in \Delta} \fg_\alpha$.  Let $\{ \sfZ_1, \sfZ_2 \}$ have dual basis $\{ \alpha_1, \alpha_2 \}$, which is a simple root system, with Borel subalgebra and positive system 
 \begin{align}
 \fb = \fh \op \bigoplus_{\alpha \in \Delta^+} \fg_\alpha, \qquad 
 \Delta^+ := \{  \alpha_1, \,\, \alpha_2, \,\, \alpha_1 + \alpha_2, \,\, 2\alpha_1 + \alpha_2, \,\, 3\alpha_1 + \alpha_2, \,\, 3\alpha_1 + 2\alpha_2 \}.
 \end{align}
 We have $\Delta = \Delta^+ \cup (-\Delta^+)$.  For each $\alpha = s\alpha_1 + t\alpha_2 \in \Delta^+$, we have standard $\fsl_2$-triples $\{ f_{st}, h_{st}, e_{st} \}$ given by root vectors $e_\alpha := e_{st}$ and $e_{-\alpha} := f_{st}$, and $h_{st} := [e_{st},f_{st}]$.
 \begin{footnotesize}
 \begin{table}[h]
 \[
 \begin{array}{c|cccccccccccccc}
 [\cdot,\cdot] & \sfZ_1 & \sfZ_2 & e_{01} & e_{10} & e_{11} & e_{21} & e_{31} & e_{32} & f_{01} & f_{10} & f_{11} & f_{21} & f_{31} & f_{32} \\ \hline
 \sfZ_1 & \cdot & \cdot & \cdot & e_{10} & e_{11} & 2 e_{21} & 3 e_{31} & 3 e_{32} & \cdot & -f_{10} & -f_{11} & -2 f_{21} & -3 f_{31} & -3 f_{32} \\
 \sfZ_2 & & \cdot & e_{01} & \cdot & e_{11} & e_{21} & e_{31} & 2 e_{32} & -f_{01} & \cdot & -f_{11} & -f_{21} & -f_{31} & -2 f_{32}\\
 e_{01} &&& \cdot & -e_{11} & \cdot & \cdot & e_{32} & \cdot & -\sfZ_1 + 2\sfZ_2 & \cdot & f_{10} & \cdot & \cdot & -f_{31}\\
 e_{10} &&&&\cdot & 2 e_{21} & -3 e_{31} & \cdot & \cdot & \cdot & 2\sfZ_1 - 3\sfZ_2 & -3 f_{01} & -2 f_{11} & f_{21} & \cdot\\
 e_{11} &&&&&\cdot & 3 e_{32} & \cdot & \cdot & e_{10} & -3 e_{01} & -\sfZ_1 + 3\sfZ_2 & 2 f_{10} & \cdot & -f_{21}\\
 e_{21} &&&&&&\cdot & \cdot & \cdot & \cdot & -2 e_{11} & 2 e_{10} & \sfZ_1 & -f_{10} & f_{11}\\
 e_{31} &&&&&&&\cdot & \cdot & \cdot & e_{21} & \cdot & -e_{10} & \sfZ_1 - \sfZ_2 & f_{01}\\
 e_{32} &&&&&&&&\cdot & -e_{31} & \cdot & -e_{21} & e_{11} & e_{01} & \sfZ_2\\
 f_{01} &&&&&&&&&\cdot & f_{11} & \cdot & \cdot & -f_{32} & \cdot\\
 f_{10} &&&&&&&&&&\cdot & -2 f_{21} & 3 f_{31} & \cdot & \cdot\\
 f_{11} &&&&&&&&&&&\cdot & -3f_{32} & \cdot & \cdot\\
 f_{21} &&&&&&&&&&&&\cdot & \cdot & \cdot\\
 f_{31} &&&&&&&&&&&&&\cdot & \cdot\\
 f_{32} &&&&&&&&&&&&&&\cdot \\
 \end{array}
 \]
 \caption{Bracket relations for $\fg = \Lie(G_2)$}
 \label{F:G2br}
 \end{table}
 \end{footnotesize}
 
 The Killing form $B(x,y) = \tr(\ad_x \circ \ad_y)$ is the symmetric bilinear pairing on $\fg$ given by
  \begin{align} \label{E:KillingForm}
 B = 16\left(3 \sfZ_1^*\sfZ_1^* + 3 \sfZ_1^* \sfZ_2^* + \sfZ_2^* \sfZ_2^* + e_{01}^* f_{01}^* + 3 e_{10}^* f_{10}^* + 3 e_{11}^* f_{11}^* + 3 e_{21}^* f_{21}^* + e_{31}^* f_{31}^* + e_{32}^* f_{32}^*\right),
 \end{align}
 with $B|_\fh$ is non-degenerate, and let $\inp{\cdot}{\cdot}$ be the induced pairing on $\fh^*$.  Explicitly, $B|_\fh$ is represented by $8\begin{psmallmatrix} 6 & 3 \\ 3 & 2 \end{psmallmatrix}$ in the basis $\{ \sfZ_1, \sfZ_2 \}$, and $\inp{\cdot}{\cdot}$ is represented by $\tfrac{1}{24}\begin{psmallmatrix} 2 & -3 \\ -3 & 6 \end{psmallmatrix}$ in the basis $\{ \alpha_1, \alpha_2 \}$.  The latter restricts to an inner product on $\fh^*_\bbR := \tspan_\bbR \Delta$, inducing the root diagram, cf.\ Figure \ref{F:G2gr}.  Letting $\alpha_j^\vee = \frac{2\alpha_j}{\inp{\alpha_j}{\alpha_j}}$, the Cartan matrix is
 $(c_{ij}) = (\inp{\alpha_i}{\alpha_j^\vee\!}) = \begin{psm} 2 & -1\\ -3 & 2 \end{psm}$, with corresponding Dynkin diagram $\Gdd{ww}{}$.  The fundamental weights $\{ \lambda_1, \lambda_2 \}$ are defined via $\inp{\lambda_i}{\alpha_j^\vee\!} = \delta_{ij}$.  The Cartan matrix provides a change of basis: $\alpha_i = \sum_j c_{ij} \lambda_j$ and $\lambda_i = \sum_j c^{ij} \alpha_j$, where $(c^{ij})$ is the inverse of $(c_{ij})$, so we have $\lambda_1 = 2\alpha_1 + \alpha_2$ and $\lambda_2 = 3\alpha_1 + 2\alpha_2$.

 \subsection{A parabolic subalgebra}
 \label{S:par}
 Given a complex semisimple Lie algebra $\fg$, a subalgebra $\fp$ is {\sl parabolic} if it contains a Borel subalgebra $\fb$.  Having fixed one such $\fb$, a parabolic $\fp \supset \fb$ is completely determined by which negative simple root spaces are used to augment $\fb$.  These are conveniently encoded by crosses on Dynkin diagrams: the $\alpha_i$ node is crossed iff $\fg_{-\alpha_i} \not\subset \fp$.  Alternatively, $\fp$ can be introduced via gradings: if $I_\fp$ is the crossed index set, define the {\sl grading element} $\sfZ := \sum_{i \in I_\fp} \sfZ_i$, induce the grading $\fg = \bigoplus_{k \in \bbZ} \fg_k$ via $\fg_k = \{ x \in \fg : [\sfZ,x] = kx \}$ (with $[\fg_i,\fg_j] \subset \fg_{i+j}$, $\forall i,j \in \bbZ$), and then let $\fp := \fg_{\geq 0}$ and $\fp_+ := \fg_+$.

Although the grading is convenient, it is only $\fg_0$-invariant.  Instead, it is better to focus on the $\fp$-invariant (decreasing) filtration on $\fg$ given by $\fg^i := \bigoplus_{j \geq i} \fg_j$, for which $\fg$ becomes a filtered Lie algebra, i.e.\ $[\fg^i,\fg^j] \subset \fg^{i+j}$, $\forall i,j \in \bbZ$.  This has associated-graded $\tgr(\fg) := \bigoplus_{k \in \bbZ} \tgr_k(\fg)$, where $\tgr_k(\fg) := \fg^k / \fg^{k+1}$.  As graded Lie algebras, $\tgr(\fg) \cong \fg$ and in this identification, we let $\tgr_k : \fg^k \to \fg_k$ denote the {\sl leading part}, i.e.\ if $x \in \fg^k$ and $x = x_k + x_{k+1} + ... $, where $x_j \in \fg_j$, then we define $\tgr_k(x) = x_k$. 
 
 Let us define some relevant Lie groups.  Let $\Aut(\fg)$ denote the automorphisms of $\fg$, which has Lie algebra  the space of derivations $\fder(\fg)$.  Let $P,P_+,G_0 \subset \Aut(\fg)$ be the connected Lie subgroups corresponding to the inclusions $\fp,\fp_+,\fg_0 \inj \fder(\fg)$.  We have $P_+ = \exp(\ad\, \fp_+)$, and $P \cong G_0 \ltimes P_+$.  The filtration defined above is $P$-invariant, while the grading is $G_0$-invariant.

 \framebox{$(2,3,5)$}: Consider only the $\Gdd{xw}{}$ case, so that $\sfZ := \sfZ_1$ and $\fp := \fb \op \fg_{-\alpha_2}$.  (The corresponding group is denoted $P = P_1$, with the subscript corresponding to the position of the cross.)  This yields the grading $\fg = \fg_{-3} \op ... \op \fg_3$ (Figure \ref{F:G2gr}) with $\fg_0 = \fz(\fg_0) \times \fg_0^{\ss} \cong \fgl(2,\bbC)$, where $\fz(\fg_0) = \langle \sfZ \rangle$ and $\fg_0^{\ss} = \langle f_{01}, h_{01}, e_{01} \rangle \cong \fsl(2,\bbC)$.  The filtration on $\fg$ descends to the quotient $\fg / \fp$ as
  \begin{align} \label{E:gp-filtration}
 \fg / \fp = \fg^{-3} / \fp \supset \fg^{-2} / \fp \supset \fg^{-1} / \fp \supset 0,
 \end{align}
 which is a {\sl $(2,3,5)$-filtration}, i.e.\ its associated-graded is $\tgr(\fg/\fp) \cong \fg_-$ as nilpotent graded Lie algebras.
 
 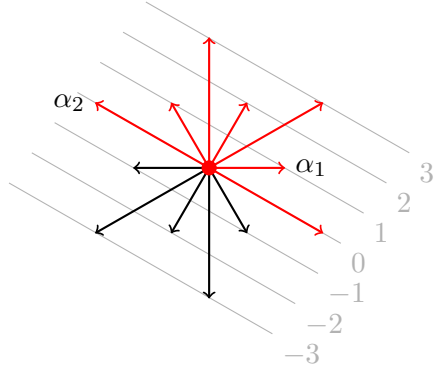
\begin{figure}[h]
 \begin{tabular}{c}
 \begin{tikzpicture}[scale=1]
 \foreach \i in {-3,...,3}
	\draw[\graycolor, xshift=3*\i mm, yshift=4*\i mm]  (150:2) to (150:-2) node[below right]{${\i}$};
 \foreach \i in {180,240,300}
	\draw[thick,->] (0,0) to (\i:1);
 \foreach \i in {210, 270}
	\draw[thick,->] (0,0) to (\i:{sqrt(3)});
 \foreach \i in {0,60,120}
	\draw[thick,->, red] (0,0) to (\i:1);
 \foreach \i in {-30,30,...,150}
	\draw[thick,->, red] (0,0) to (\i:{sqrt(3)});
 \fill[red] (0,0) circle (0.1);
 \node[right] at (1,0) {$\alpha_1$};
 \node[left] at (150:{sqrt(3)}) {$\alpha_2$};
 \end{tikzpicture}
 \end{tabular}
 \caption{Grading of $\fg = \Lie(G_2)$ relevant for our $(2,3,5)$-study}
 \label{F:G2gr}
 \end{figure}
 \subsection{Kostant's theorem}
 \label{S:Kostant}
 Consider $C_k(\fp_+,\fg) := \bigwedge^k \fp_+ \otimes \fg$, with induced $P$-module structure, and the chain complex $(C_\bullet(\fp_+,\fg),\partial^*)$ with $\partial^*$ the standard Lie algebra homology differential.  By $P$-equivariancy of $\partial^*$, the homology groups $H_k(\fp_+,\fg) := \frac{\ker(\partial^*)}{\im(\partial^*)}$ are also $P$-modules, which are in fact {\sl completely reducible}, i.e.\ $P_+$ acts trivially, so only the $G_0$-action is relevant.  The Killing form induces $\fp_+ \cong (\fg/\fp)^*$ as $P$-modules and $\fp_+ \cong (\fg_-)^*$ as $G_0$-modules, so  $C_k(\fp_+,\fg) \cong C^k(\fg_-,\fg) := \bigwedge^k (\fg_-)^* \otimes \fg$ as $G_0$-modules.  The latter form a cochain complex $(C^\bullet(\fg_-,\fg),\partial)$ with respect to the standard Lie algebra cohomology differential $\partial$, which is $G_0$-equivariant.  Moreover, we have a $G_0$-invariant algebraic Hodge decomposition:
 \begin{align} \label{E:Hodge}
 C^k(\fg_-,\fg) \cong \im(\partial) \op \ker(\Box) \op \im(\partial^*),
 \end{align}
 where $\Box = \partial \partial^* + \partial^* \partial$ is an algebraic Laplacian, with $\ker(\Box) = \ker(\partial) \cap \ker(\partial^*)$.   Then 
 \begin{align}
 H_k(\fp_+,\fg) = \frac{\ker(\partial^*)}{\im(\partial^*)} \cong \ker(\Box) \cong \frac{\ker(\partial)}{\im(\partial)} \cong H^k(\fg_-,\fg).
 \end{align}
 Its $G_0$-module structure is efficiently computed via Kostant's theorem \cite{Kos1961}: if $\lambda$ is the highest (= minus lowest) weight of $\fg$, then as a $\fg_0$-module,
 \begin{align}
 H_k(\fp_+,\fg) \cong H^k(\fg_-,\fg) \cong \bigoplus_{w \in W^\fp(k)} \bbU_{-w \bullet \lambda},
 \end{align}
 where:
 \begin{enumerate}
 \item[(a)] $W^\fp(k)$ consists of the length $k$ elements of the Hasse diagram $W^\fp$, which is a distinguished subset of the Weyl group of $\fg$; see below for $k=2$.
 \item[(b)] the affine Weyl group action is used, i.e.\ $w\bullet \lambda = w(\lambda + \rho) - \rho$, where $\rho$ is the lowest form, i.e.\ the sum of the fundamental weights;
 \item[(c)] $\bbU_\mu$ is the (unique up to isomorphism) $\fg_0$-irrep with {\em lowest} weight $\mu$.
 \end{enumerate}
 Moreover, Kostant gave an explicit lowest weight vector (lwv) $\phi_0$, realized as a harmonic (co-)chain.

 The $k=1$ case underpins parabolic geometries, e.g.\ for $(2,3,5)$, see the discussion above Theorem \ref{T:fundthm}.
 
 The $k=2$ case is most relevant to our current study, and $\ker(\partial^*)$ (``{\sl normal} elements'') will henceforth refer to the kernel of $\partial^* : C_2(\fp_+,\fg) \to C_1(\fp_+,\fg)$ given on decomposable elements by
 \begin{align}
 \partial^*(X \wedge Y \otimes v) = -Y \otimes [X,v] + X \otimes [Y,v] - [X,Y] \otimes v.
 \end{align}
 This subspace inherits a $P$-invariant filtration, and one of the filtrands is $\ker(\partial^*)^1$ (``{\sl regular, normal} elements''), which is the subspace on which the grading element $\sfZ$ acts with positive eigenvalues.  We have the corresponding subspace $H_2(\fp_+,\fg)^1 \cong_{\fg_0} H^2_+(\fg_-,\fg)$.  Written as a produced of simple root reflections, the relevant words are $w = (jk) = \sigma_j \circ \sigma_k \in W^\fp(2)$, where $j$ corresponds to a crossed node, and $k\neq j$ is either crossed or adjacent to $j$ (in the Dynkin diagram).  In terms of root vectors $e_\alpha$ for $\alpha \in \Delta$, we have
 \begin{align}
 \phi_0 = e_{\alpha_j} \wedge e_{\sigma_j(\alpha_k)} \otimes e_{-w(\lambda)}.
 \end{align}
 
 \framebox{$(2,3,5)$}: The Killing form \eqref{E:KillingForm} induces $\fp_+ \cong (\fg/\fp)^*$ as $P$-modules and $\fp_+ \cong (\fg_-)^*$ as $G_0$-modules via:
 \begin{align} \label{E:KFconvert}
  (e_{10},e_{11}, e_{21}, e_{31}, e_{32}) \mapsto (24 f_{10}^*,24 f_{11}^*,24 f_{21}^*,8 f_{31}^*,8 f_{32}^*).
 \end{align}
 We have $W^\fp(2) = \{ (12) \}$, $\lambda = \lambda_2$, so a simple calculation \cite[Example B.1]{KT2017} yields:
 \begin{align} \label{E:lwv}
 H_2(\fp_+,\fg) \cong \bbU_\mu, \qquad
 \mu = 8\lambda_1 - 4\lambda_2 = 4\alpha_1, \qquad
 \phi_0 = e_{10} \wedge e_{31} \otimes f_{01} \equiv 192 f_{10}^* \wedge f_{31}^* \otimes f_{01}.
 \end{align}
 From this, $H_2(\fp_+,\fg) \cong S^4 \fg_1 \cong S^4 (\fg_{-1})^*$ as modules for $\fg_0 \cong \fgl_2$, since the standard $\fg_0$-rep $\fg_1$ has lowest weight $\alpha_1 = 2\lambda_1 - \lambda_2$.  Thus, $H_2(\fp_+,\fg) = H_2(\fp_+,\fg)^1$ is the space of {\em binary quartics}. Repeatedly applying the raising operator $e_{01} \in \fg_0$ (weight $\alpha_2$) to $\phi_0$ (weight $4\alpha_1$) gives all weights of $H_2(\fp_+,\fg)$:
 \begin{align} \label{E:sl2basis}
 H_2(\fp_+,\fg): \quad
 \begin{array}{|c||c|c|c|c|c|} \hline
 \mbox{Weight} & 4\alpha_1 & 4\alpha_1 + \alpha_2 & 4\alpha_1 + 2\alpha_2 & 4\alpha_1 + 3\alpha_2 & 4\alpha_1 + 4\alpha_2\\ \hline
 \mbox{Abstract weight basis} & \sfy^4 & \sfx\sfy^3 & \sfx^2 \sfy^2 & \sfx^3 \sfy & \sfx^4\\ \hline
 \end{array}
 \end{align}
 Here, take the basis $(\sfx,\sfy) = (-e_{11}, e_{10})$ for $\fg_1$, with respect to which we identify
 \begin{align}
 (\sfZ,h_{01},e_{01}, f_{01}) \quad\longleftrightarrow\quad (\sfx \partial_{\sfx} + \sfy \partial_{\sfy}, \sfx \partial_{\sfx} - \sfy \partial_{\sfy}, \sfx \partial_{\sfy}, \sfy \partial_{\sfx}),
 \end{align}
 and the $\fg_0$-action on $H_2(\fp_+,\fg) \cong S^4 \fg_1$ is induced.

 \subsection{Tanaka prolongation}
 \begin{defn} Fix $(\fg,\fp)$ as in \S\ref{S:par} with induced grading on $\fg$.  Let $\bbU$ be a $\fg_0$-rep, and $\phi \in \bbU$.  The {\sl extrinsic Tanaka prolongation algebra} $\fa^\phi \subset \fg$ is the graded Lie subalgebra defined by:
 \begin{align}
 \begin{split}
 \fa^\phi_- &:= \fg_-, \quad \fa^\phi_0 := \fann(\phi) = \{ x \in \fg_0 : x \cdot \phi = 0 \},\\
 \fa_k^\phi  &:= \{ x \in \fg_k : [x,\fg_{-1}] \subset \fa_{k-1}^\phi \}, \quad \forall k > 0.
 \end{split}
 \end{align}
 \end{defn}

 We will be concerned with the case $0 \neq \phi \in H_2(\fp_+,\fg)^1$.  Sometimes $(G,P)$ is {\sl prolongation-rigid}, i.e.\ $\fa^\phi_+ = 0$ whenever $0 \neq \phi \in H_2(\fp_+,\fg)^1$, and efficient criteria for assessing this were given in \cite{KT2017}.  Prolongation-rigidity of $(G_2,P_1)$ follows from this, but we also confirm it below in a direct manner.\\
 
 \framebox{$(2,3,5)$}: Any (complex) binary quartic can be written as a product of linear factors, and the multiplicities of these factors indicates the {\sl root type}.  Aside from the trivial root type $\sfO$, all non-trivial root types are shown in Table \ref{F:RootType} with representative element $\phi$ normalized using the $G_0$-action.
  
  \begin{table}[h]
 \[
 \begin{array}{|c|c|c|c|c|} \hline
 \mbox{Root type} & \mbox{Normal form $\phi$} & \mbox{Basis for } \fann(\phi) & \dim(\fa^\phi) \\ \hline\hline
 \sfN & \sfy^4 & \sfZ_2, \,\, f_{01} & 7 \\ \hline
 \mathsf{III} & \sfx\sfy^3 & \sfZ_1 - 4\sfZ_2 & 6 \\ \hline
 \sfD & \sfx^2 \sfy^2 & \sfZ_1 - 2\sfZ_2 & 6 \\ \hline
 \mathsf{II} & \sfx^2\sfy(\sfx-\sfy) & 0 & 5 \\ \hline
 \mathsf{I} & \sfx\sfy(\sfx-\sfy)(\sfx-k\sfy) & 0 & 5 \\ \hline
 \end{array}
 \]
 \caption{Non-trivial root types for $\phi \in H_2(\fp_+,\fg)^1$}
 \label{F:RootType}
 \end{table}
 
 The final column of Table \ref{F:RootType} immediately follows from:

 \begin{lemma} \label{L:PR} $(G_2,P_1)$ is prolongation-rigid.
 \end{lemma}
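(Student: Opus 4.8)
The plan is to reduce the entire statement to a single short computation in $S^4\fg_1 \cong H_2(\fp_+,\fg)$, exploiting $G_0$-equivariance of the construction $\phi \mapsto \fa^\phi_\bullet$. Throughout I will use that, with $\sfZ = \sfZ_1$, the grading reads $\fg_1 = \langle e_{10}, e_{11}\rangle$, $\fg_2 = \langle e_{21}\rangle$, $\fg_3 = \langle e_{31}, e_{32}\rangle$, and that $\fg_0 \cong \fgl_2$ acts on $H_2(\fp_+,\fg) \cong S^4\fg_1$ as binary quartics via the differential-operator realization recorded after \eqref{E:sl2basis}. Since $\sfZ$ acts as the scalar $4$ on $S^4\fg_1$, the action of any $u \in \fg_0$ on a quartic $\phi$ is governed by its $\fsl_2$-part together with this scalar.

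First I would reduce $\fa^\phi_+ = 0$ to the single assertion $\fa^\phi_1 = 0$. Indeed, if $\fa^\phi_1 = 0$, then by definition $\fa^\phi_2 = \{x \in \fg_2 : [x,\fg_{-1}] = 0\}$ and $\fa^\phi_3 = \{x \in \fg_3 : [x,\fg_{-1}] = 0\}$. Because $\fg_-$ is generated by $\fg_{-1}$, Table \ref{F:G2br} gives $\fg_{-2} = [\fg_{-1},\fg_{-1}]$ and $\fg_{-3} = [\fg_{-1},\fg_{-2}]$; then invariance of the Killing form $B$ shows that any $x \in \fg_k$ with $[x,\fg_{-1}] = 0$ satisfies $B(x,\fg_{-k}) = 0$, so nondegeneracy of $B : \fg_k \times \fg_{-k} \to \bbC$ forces $x = 0$. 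Hence $\fa^\phi_2 = \fa^\phi_3 = 0$.

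Next, I would record that $\phi \mapsto \fa^\phi_\bullet$ is $G_0$-equivariant: one checks $\fann(g\cdot\phi) = g\,\fann(\phi)\,g^{-1}$, and, since $G_0$ preserves the grading, $\fa^{g\cdot\phi}_1 = g\cdot\fa^\phi_1$ for all $g \in G_0$. As $\fg_0$ acts on $\fg_1$ as all of $\fgl(\fg_1)$, the connected group $G_0$ surjects onto $\mathrm{GL}(\fg_1) \cong \mathrm{GL}_2(\bbC)$, which is transitive on $\fg_1 \setminus \{0\}$. Consequently, to prove $\fa^\phi_1 = 0$ for all $0 \neq \phi$, it suffices to rule out a single normalized vector: if some $0 \neq x \in \fa^\phi_1$, translate by $g \in G_0$ with $g\cdot e_{10} = x$ to obtain $e_{10} \in \fa^{g^{-1}\cdot\phi}_1$ with $g^{-1}\cdot\phi \neq 0$. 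So it is enough to show that $e_{10} \in \fa^\phi_1$ forces $\phi = 0$.

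The core computation is then to impose $e_{10} \in \fa^\phi_1$, i.e.\ $[e_{10},f_{10}]\cdot\phi = 0$ and $[e_{10},f_{11}]\cdot\phi = 0$. From Table \ref{F:G2br}, $[e_{10},f_{11}] = -3f_{01}$ and $[e_{10},f_{10}] = 2\sfZ_1 - 3\sfZ_2$; rewriting the latter via $h_{01} = -\sfZ_1 + 2\sfZ_2$ and evaluating $\sfZ = 4$ on quartics, the two conditions become $f_{01}\cdot\phi = 0$ and $(2 - \tfrac{3}{2}h_{01})\phi = 0$. The first, with $f_{01} = \sfy\partial_{\sfx}$, forces $\phi \in \langle \sfy^4\rangle$; the second then yields $8\phi = 0$, so $\phi = 0$, the desired contradiction. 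This establishes $\fa^\phi_1 = 0$ and, with the reduction above, prolongation-rigidity. I expect the only genuine subtlety to be verifying the equivariance/transitivity reduction (legitimizing the normalization $x = e_{10}$) and the generation identities $\fg_{-2} = [\fg_{-1},\fg_{-1}]$, $\fg_{-3} = [\fg_{-1},\fg_{-2}]$ underpinning the vanishing of $\fa^\phi_2, \fa^\phi_3$; both are immediate from Table \ref{F:G2br}, so the ``hard part'' is really just bookkeeping. As a bonus, once $\fa^\phi_+ = 0$, the last column of Table \ref{F:RootType} follows from $\dim \fa^\phi = \dim \fg_- + \dim \fann(\phi) = 5 + \dim\fann(\phi)$.
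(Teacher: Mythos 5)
Your argument is correct, and it is a genuine (if mild) variant of the paper's proof. Both proofs ultimately hinge on the same two brackets, $[e_{10},f_{10}]=2\sfZ_1-3\sfZ_2$ and $[e_{10},f_{11}]=-3f_{01}$, but the normalizations are dual: the paper uses the $G_0$-action to put $\phi$ into one of the five normal forms of Table \ref{F:RootType} and then checks, case by case against the listed annihilators, that a general $X=ae_{10}+be_{11}\in\fa_1^\phi$ must vanish; you instead use transitivity of $G_0$ on $\fg_1\setminus\{0\}$ (legitimate, since $\fg_0\to\fgl(\fg_1)$ is an isomorphism and $G_0$ is connected, so its image is all of $\mathrm{GL}(\fg_1)$) to normalize the putative prolongation element to $e_{10}$, reducing everything to the single computation $f_{01}\cdot\phi=0$ and $(2-\tfrac{3}{2}h_{01})\cdot\phi=0$, which forces $\phi\in\langle\sfy^4\rangle$ and then $8\phi=0$. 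Your route avoids the root-type case analysis entirely and does not need the annihilator column of Table \ref{F:RootType} as input (though that table is needed elsewhere, and the paper's case-by-case argument produces the final column of the table as a byproduct in exactly the form used later). You also supply the detail, asserted without proof in the paper, that $\fa_1^\phi=0$ forces $\fa_2^\phi=\fa_3^\phi=0$, via generation of $\fg_-$ by $\fg_{-1}$ and nondegeneracy of the Killing pairing $\fg_k\times\fg_{-k}\to\bbC$. All steps check against Table \ref{F:G2br} and the identification after \eqref{E:sl2basis}; I see no gap.
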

 
 \begin{proof} Using the $G_0$-action, we may assume that $0 \neq \phi \in H_2(\fp_+,\fg)^1$ is one of the normal forms in Table \ref{F:RootType}.  Let $X = a e_{10} +  b e_{11} \in \fa_1^\phi \subset \fg_1$, so $[X,\fg_{-1}] \subset \fann(\phi)$, i.e. the following lie in $\fann(\phi)$:
 \begin{align}
 [X,f_{10}] = a(2\sfZ_1 - 3\sfZ_2) - 3b e_{01}, \quad
 [X,f_{11}] = b(- \sfZ_1 + 3 \sfZ_2) -3 a f_{01}.
 \end{align}
 From Table \ref{F:RootType}, this forces $X=0$. Thus, $\fa_1^\phi = 0$ and hence $\fa_2^\phi = \fa_3^\phi = 0$, so $\fa_+^\phi = 0$. 
 \end{proof}
 
 \section{Cartan geometries and Cartan-theoretic descriptions of homogeneous models}
 \subsection{Cartan geometries}
  Let $G$ be a Lie group and $P$ a closed subgroup.  A {\sl Cartan geometry} of type $(G,P)$ is a (right) $P$-principal bundle $\cG \to M$ equipped with a {\sl Cartan connection} $\omega \in \Omega^1(\cG;\fg)$:
 \begin{enumerate}
 \item $\omega_u : T_u \cG \to \fg$ is a linear isomorphism $\forall u \in \cG$;
 \item $R_p^* \omega = \Ad_{p^{-1}} \omega$, $\forall p \in P$;
 \item $\omega(X^\dagger) = X$, $\forall X \in \fp$, in terms of  fundamental vertical vector fields: $(X^\dagger)_u = \left.\frac{d}{dt}\right|_{t=0} R_{\exp(tX)} u$.
 \end{enumerate}
 The {\sl curvature} is $K = d\omega + \frac{1}{2} [\omega,\omega] \in \Omega^2(\cG;\fg)$, which is horizontal, i.e.\ $K(X^\dagger,\cdot) = 0$, $\forall X \in \fp$.  Letting $\kappa(x,y) = K(\omega^{-1}(x), \omega^{-1}(y))$, we induce the {\sl curvature function} $\kappa : \cG \to \bigwedge^2(\fg/\fp)^* \otimes \fg$.  Flatness ($\kappa=0$) characterizes local equivalence to $(G \to G/P, \omega_G)$, where $\omega_G$ is the (left) Maurer--Cartan form on $G$.
 
 Parabolic geometries are Cartan geometries of type $(G,P)$, where $G$ is semisimple and $P$ is parabolic.  The geometry is {\sl regular and normal} if $\kappa$ is valued in $\ker(\partial^*)^1$.  In this case, the {\sl harmonic curvature} is $\kappa_H := \kappa \,\mod \im(\partial^*) \in H_2(\fp_+,\fg)^1$, which completely obstructs flatness, i.e.\ $\kappa_H = 0$ iff $\kappa = 0$.
 
 There is a well-established categorical equivalence between regular, normal parabolic geometries of type $(G,P)$ and underlying geometric structures.  Specializing to $(2,3,5)$, we use Kostant's theorem with $W^\fp(1) = \{ (1) \}$ to calculate that $H^1(\fg_-,\fg)$ is a 3-dimensional $\fg_0$-irrep with lowest $\fg_0$-weight $2\lambda_1 - 2\lambda_2 = -2\alpha_1 - 2\alpha_2$, so $\sfZ = \sfZ_1$ acts on it with eigenvalue $-2$.  From \cite[Thm.3.1.14 \& Rmk.3.3.7]{CS2009}, we obtain:
 
 \begin{theorem}[Fundamental theorem of $(2,3,5)$-geometry] \label{T:fundthm}
 There is an equivalence of categories between $(2,3,5)$-geometries $(M,\cD)$ and regular, normal Cartan geometries $(\cG \to M, \omega)$ of type $(G,P) = (G_2,P_1)$.
 \end{theorem}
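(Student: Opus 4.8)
The whole statement is an instance of the general machinery for parabolic geometries, so my plan is to reduce it to \cite[Thm.~3.1.14]{CS2009} and then check, by hand for $(G_2,P_1)$, that the ``underlying structure'' produced by that theorem is nothing more than a $(2,3,5)$-distribution. \'Cap--Slov\'ak establish, for any $(G,P)$ with $\fg$ graded as in \S\ref{S:par}, a functorial equivalence of categories between regular, normal parabolic geometries of type $(G,P)$ and \emph{regular infinitesimal flag structures} of type $(G,P)$; the latter consist of a filtered manifold whose symbol is pointwise isomorphic to $\fg_-$, together with a reduction of the associated-graded frame bundle to the structure group $G_0$ (acting via $\Ad$). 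The deep content -- existence and uniqueness of the canonical regular, normal Cartan connection attached to such an underlying structure -- is exactly this theorem, which I would invoke rather than reprove. What remains is purely the identification of the underlying data.

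\textbf{Identifying the symbol and regularity.} First I would observe that $\fg_- = \fg_{-1}\op\fg_{-2}\op\fg_{-3}$, with $\dim\fg_{-i} = 2,1,2$, is the free step-$3$ nilpotent Lie algebra on the two generators $\fg_{-1}$. Consequently a filtered manifold with this symbol type is exactly a rank-$2$ subbundle $\cD = T^{-1}M \subset T^{-2}M \subset T^{-3}M = TM$ whose higher filtrands are \emph{forced} to be $T^{-2}M = \cD + [\cD,\cD]$ and $T^{-3}M = TM$; the requirement that the symbol be isomorphic to $\fg_-$ at every point (regularity) is precisely the genericity condition that $[\cD,\cD]$ have rank $3$ and $[\cD,[\cD,\cD]]$ rank $5$. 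Thus the filtered-manifold part of a regular infinitesimal flag structure is the same data as a $(2,3,5)$-distribution, and morphisms match: a local diffeomorphism preserving $\cD$ automatically preserves the bracket-generated filtration.

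\textbf{Redundancy of the $G_0$-reduction.} The only remaining datum is the $G_0$-reduction, and here the key computation is that $\Ad\colon G_0 \to \Aut_{\rm gr}(\fg_-)$ is onto the identity component; infinitesimally, $\fg_0 = \fder_0(\fg_-)$. This follows again from freeness: a degree-$0$ derivation of $\fg_-$ is determined by, and may be prescribed arbitrarily on, the generators $\fg_{-1}$, so $\fder_0(\fg_-) = \fgl(\fg_{-1}) \cong \fgl(2,\bbC) \cong \fg_0$. Hence every filtered manifold of the right symbol type admits a $G_0$-reduction, uniquely, and this reduction carries no information beyond $\cD$. The complementary cohomological input is the Kostant computation recorded above: $H^1(\fg_-,\fg) \cong \bbU_{2\lambda_1 - 2\lambda_2}$ is concentrated in a single homogeneity on which $\sfZ$ acts by $-2$, so it has no component in homogeneity $\geq 0$. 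This is exactly the hypothesis under which \cite[Rmk.~3.3.7]{CS2009} specializes the general equivalence to the statement that the underlying structure is the bare distribution, with harmonic curvature living in $H^2(\fp_+,\fg)^1$ -- the binary quartics of \eqref{E:lwv}.

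\textbf{Main obstacle.} The genuinely hard algebraic and analytic work -- prolonging the structure group, absorbing torsion, and normalizing to obtain a canonical Cartan connection together with its uniqueness -- is entirely contained in \cite[Thm.~3.1.14]{CS2009}, so I would not reprove it; the point is simply that its hypotheses hold for $(G_2,P_1)$. Within the verification, the step demanding genuine care is the coincidence $\fg_0 = \fder_0(\fg_-)$ together with the vanishing of $H^1$ in homogeneity $\geq 0$: these are precisely what collapse the a priori extra $G_0$-reduction onto the filtration, so that ``regular infinitesimal flag structure of type $(G_2,P_1)$'' and ``$(2,3,5)$-distribution'' name the same geometry. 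Everything else is bookkeeping with the data already assembled in \S\ref{S:G2data}.
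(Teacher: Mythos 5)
Your proposal is correct and follows essentially the same route as the paper, which likewise derives the theorem by computing $H^1(\fg_-,\fg)$ via Kostant (concentrated in homogeneity $-2$) and then citing \cite[Thm.~3.1.14 \& Rmk.~3.3.7]{CS2009}. The extra verifications you supply -- that $\fg_-$ is the free step-$3$ nilpotent Lie algebra on two generators, so the filtration is determined by $\cD$ and the $G_0$-reduction is redundant since $\fg_0 = \fder_0(\fg_-) \cong \fgl(2)$ -- are exactly the details the paper leaves implicit in that citation.
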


 From \S\ref{S:Kostant}, $H_2(\fp_+,\fg)$ is the space of binary quartics, and we refer to $\kappa_H$ as the {\sl Cartan quartic} \cite{Car1910}.

\subsection{The curvature module for $(2,3,5)$}
 Although curvature $\kappa$ is algebraically constrained to lie in $\ker(\partial^*)^1$, this $P$-module is a priori quite large.  The {\sl structure equations}
 \begin{align} \label{E:streq}
 d\omega = -\tfrac{1}{2} [\omega,\omega] + K,
 \end{align}
 are further geometrically constrained by integrability conditions, i.e.\ $d^2 = 0$ (yielding Bianchi identities).  To deduce finer constraints on $\kappa$ beyond regularity / normality, we use the following key result \cite[Cor.3.2]{Cap2006}:
 
 \begin{theorem} \label{T:Cap} Let $\bbE \subset \ker(\partial^*) \subset \bigwedge^2 \fp_+ \otimes \fg \cong \bigwedge^2 (\fg/\fp)^* \otimes \fg$ be a $P$-submodule and put $\bbE_0 := \bbE \cap \ker(\Box)$.  Let $(\cG \to M, \omega)$ be a regular, normal parabolic geometry such that $\kappa_H$ has values in $\bbE_0$.  If either:
 \begin{enumerate}
 \item[(a)] $(\cG \to M, \omega)$ is torsion-free, i.e.\ $\kappa$ is valued in $\bigwedge^2 (\fg/\fp)^* \otimes \fp$, or
 \item[(b)] $\Box(\bbE) \subset \bbE$ and $\bbE$ is {\sl stable under $\bbE$-insertions}, i.e.\ $\forall \varphi, \psi \in \bbE$, we have $\partial^* (\iota_\psi \varphi) \in \bbE$, where $\iota_\psi \varphi$ is the alternation of $(X_0,X_1,X_2) \mapsto \varphi(\psi(X_0,X_1) + \fp,X_2)$ for $X_i \in \fg / \fp$,
 \end{enumerate}
 then $\kappa$ has values in $\bbE$.
 \end{theorem}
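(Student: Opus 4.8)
\emph{Proof proposal.} The plan is to reconstruct the full curvature function $\kappa$ from its harmonic part $\kappa_H$ by an induction on \emph{homogeneity} (the eigenvalue of the grading element $\sfZ$ on $\ker(\partial^*)^1$), the sole engine being the Bianchi identity for the curvature of a Cartan connection. After using the filtration and the normalization $\partial^*\kappa = 0$, this identity takes the schematic form
\begin{align*}
\partial\kappa = -\iota_\kappa\kappa + \mathrm{Alt}(\nabla\kappa),
\end{align*}
where $\iota_\kappa\kappa$ is exactly the quadratic insertion term of the statement and $\mathrm{Alt}(\nabla\kappa)$ is the alternation of the covariant derivative of $\kappa$ (with respect to a choice of Weyl connection). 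Writing $\kappa = \sum_{\ell\geq 1}\kappa^{(\ell)}$ for its homogeneous components, regularity ensures that the lowest nonzero part projects to $\kappa_H$ and, more generally, that the harmonic part of each $\kappa^{(\ell)}$ is the degree-$\ell$ piece of $\kappa_H$, hence lies in $\bbE_0 \subset \bbE$ by hypothesis.

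For the inductive step I would assume $\kappa^{(i)} \in \bbE$ for all $i < \ell$ and split $\kappa^{(\ell)}$ via the Hodge decomposition \eqref{E:Hodge} into its harmonic part (already in $\bbE_0$) and its $\im(\partial^*)$-part $\kappa^{(\ell)}_1$. Since $\partial^*\kappa^{(\ell)} = 0$ and $\Box$ is invertible on $\im(\partial^*)$, and since $\partial$, $\partial^*$, $\Box$ all preserve homogeneity, one recovers
\begin{align*}
\kappa^{(\ell)}_1 = \Box^{-1}\partial^*\big(-\iota_\kappa\kappa + \mathrm{Alt}(\nabla\kappa)\big)^{(\ell)}.
\end{align*}
The decisive bookkeeping is that the degree-$\ell$ part of $\iota_\kappa\kappa$ is a sum of terms $\iota_{\kappa^{(i)}}\kappa^{(j)}$ with $i+j=\ell$ and $i,j\geq 1$, hence built only from strictly lower components, which lie in $\bbE$ by hypothesis. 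In case (b), stability of $\bbE$ under $\bbE$-insertions gives $\partial^*(\iota_{\kappa^{(i)}}\kappa^{(j)})\in\bbE$, while $\Box(\bbE)\subset\bbE$ together with invertibility of $\Box$ on $\im(\partial^*)$ forces $\Box^{-1}$ to preserve $\bbE$; this closes the induction for the insertion contribution. In case (a), torsion-freeness makes $\kappa$ valued in $\bigwedge^2(\fg/\fp)^*\otimes\fp$, so that $\kappa(X_0,X_1)\equiv 0 \pmod{\fp}$ and the insertion term $\iota_\kappa\kappa$ vanishes identically, removing the need for insertion-stability.

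The main obstacle I anticipate is the honest treatment of the covariant-derivative term $\mathrm{Alt}(\nabla\kappa)$. A priori its degree-$\ell$ part could involve $\kappa^{(\ell)}$ itself rather than only strictly lower components, which would wreck the induction; resolving this rests on the standard-but-delicate fact that, after applying $\partial^*$ and using $\partial^*\kappa = 0$, the genuine $\kappa^{(\ell)}$-dependence cancels, so that $\kappa^{(\ell)}_1$ is a universal expression in $\{\kappa^{(i)}\}_{i<\ell}$. One must then check that the resulting operators — covariant differentiation followed by the algebraic maps $\partial^*$ and $\Box^{-1}$ — respect the $P$-module $\bbE$: fibrewise $\bbE$-valuedness of $\nabla\kappa$ follows from that of $\kappa$, and the algebraic maps are controlled exactly by the two hypotheses in (b), while in (a) the collapse of the quadratic term leaves only this derivative mechanism to propagate $\bbE$-valuedness. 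Verifying this cancellation, and the stability of these universal operators on $\bbE$, is the technical heart of the proof.
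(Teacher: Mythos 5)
First, a point of order: the paper does not prove Theorem \ref{T:Cap} at all — it is imported verbatim from \v{C}ap \cite[Cor.3.2]{Cap2006} — so there is no in-paper argument to compare against, and I am measuring your proposal against the known proof of that result. Your architecture is the correct one: induction on homogeneity, the Bianchi identity, the Hodge decomposition \eqref{E:Hodge}, invertibility of $\Box$ on $\im(\partial^*)$, and the observation that the degree-$\ell$ part of $\iota_\kappa\kappa$ only involves $\iota_{\kappa^{(i)}}\kappa^{(j)}$ with $i+j=\ell$ and $i,j\geq 1$ — which is exactly where hypothesis (a) (the insertion vanishes because $\kappa(X_0,X_1)\in\fp$) or (b) (insertion-stability) enters.

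However, the derivative term, which you explicitly defer, is a genuine gap, and your diagnosis of it is off in both directions. First, the worry that $\mathrm{Alt}(\nabla\kappa)^{(\ell)}$ might involve $\kappa^{(\ell)}$ itself is dispelled by a one-line homogeneity count, with no cancellation required: differentiating along $\omega^{-1}(X)$ for $X\in\fg_{-i}$ adds a form slot of degree $i\geq 1$, so the degree-$\ell$ component of $\mathrm{Alt}(\nabla\kappa)$ depends only on $\kappa^{(j)}$ with $j\leq\ell-1$. Second — and this is the real issue — nothing in hypotheses (a) or (b) tells you that $\partial^*\bigl(\mathrm{Alt}(\nabla\kappa)\bigr)^{(\ell)}$ lands in $\bbE$; knowing that $\nabla\kappa$ is fibrewise valued in $(\fg/\fp)^*\otimes\bbE$ does not by itself put $\partial^*$ of its alternation back into $\bbE$. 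The mechanism that closes the induction is the algebraic identity, for $Z\in\fp_+$ and $\varphi\in\bigwedge^2\fp_+\otimes\fg$,
\begin{align*}
\partial^*(Z\wedge\varphi) \;=\; -\,Z\cdot\varphi \;-\; Z\wedge\partial^*\varphi,
\end{align*}
where $Z\cdot\varphi$ denotes the $\fp_+$-module action. Applied to $\mathrm{Alt}(\nabla\kappa)=\sum_a Z_a\wedge\nabla_{\xi^a}\kappa$ (with dual bases $Z_a\in\fp_+$, $\xi^a\in\fg/\fp$), the second term dies because normality gives $\partial^*(\nabla_{\xi^a}\kappa)=\nabla_{\xi^a}(\partial^*\kappa)=0$, and the first term lies in $\bbE$ because each $\nabla_{\xi^a}\kappa^{(j)}$ with $j<\ell$ is $\bbE$-valued and $\bbE$ is a $P$-submodule. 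So the true role of $\partial^*\kappa=0$ here is to reduce $\partial^*\circ\mathrm{Alt}\circ\nabla$ to a purely algebraic $\fp_+$-action on $\bbE$-valued quantities — not to cancel a $\kappa^{(\ell)}$-dependence, and not something controlled by the hypotheses of (b) as you suggest. Without this step your induction does not close. A smaller loose end: in case (a) the hypothesis $\Box(\bbE)\subset\bbE$ is not available, yet your argument still invokes $\Box^{-1}$ preserving $\bbE\cap\im(\partial^*)$; you should either justify that in the torsion-free setting or route around it.
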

 
 Restricting now to $(2,3,5)$, we define $\bbE$ in a simple manner from the seed data $\phi_0 = e_{10} \wedge e_{31} \otimes f_{01}$:
 
 \begin{defn}
 Let $\bbE \subset \ker(\partial^*)^1$ be the $P$-submodule generated by the lowest weight vector $\phi_0 = e_{10} \wedge e_{31} \otimes f_{01}$ from \eqref{E:lwv}.  We refer to it as the {\sl curvature module} for $(2,3,5)$-geometries.
 \end{defn}
 
 \begin{table}[h]
 \begin{tiny}
 \begin{framed}
 \[ \hspace{-0.1in}
 \begin{array}{@{}c@{\,}c@{\,\,}c@{\,\,}l@{\,\,}l@{}}
 \mbox{Hom.} & \mbox{Label} & \mbox{$\fg_0$-module} & \mbox{Weight} & \mbox{Weight vector as a 2-chain in $\bigwedge^2 \fp_+ \otimes \fp$} \\ \hline
 4 & A & \Gtwo{xw}{-8,4} 
 &\begin{array}{l} 
 4\alpha_1\\
 4\alpha_1 +\,\,\, \alpha_2\\
 4\alpha_1 + 2\alpha_2\\
 4\alpha_1 + 3\alpha_2\\
 4\alpha_1 + 4\alpha_2
 \end{array} & 
 \begin{array}{l}
 e_{10} \wedge e_{31} \otimes f_{01}\\
 (e_{10} \wedge e_{32} - e_{11} \wedge e_{31}) \otimes f_{01} + e_{10} \wedge e_{31} \otimes h_{01}\\
 (e_{10} \wedge e_{32} - e_{11} \wedge e_{31}) \otimes h_{01} - e_{10} \wedge e_{31} \otimes e_{01} - e_{11} \wedge e_{32} \otimes f_{01}\\
 (e_{11} \wedge e_{31} - e_{10} \wedge e_{32} ) \otimes e_{01} - e_{11} \wedge e_{32} \otimes h_{01}\\
 e_{11} \wedge e_{32} \otimes e_{01}
 \end{array}\\ \hline
 5 & B & \Gtwo{xw}{-7,3} &
 \begin{array}{ll} 
 5\alpha_1 + \,\,\,\alpha_2\\
 5\alpha_1 + 2\alpha_2\\
 5\alpha_1 + 3\alpha_2\\
 5\alpha_1 + 4\alpha_2
 \end{array} & 
 \begin{array}{l}
 e_{10} \wedge e_{31} \otimes e_{10} - 2 e_{21} \wedge e_{31} \otimes f_{01}\\
 (e_{10} \wedge e_{32} - e_{11} \wedge e_{31}) \otimes e_{10} - e_{10} \wedge e_{31} \otimes e_{11} - 2 e_{21} \wedge e_{32} \otimes f_{01} - 2 e_{21} \wedge e_{31} \otimes h_{01}\\
 (e_{11} \wedge e_{31} - e_{10} \wedge e_{32}) \otimes e_{11} - e_{11} \wedge e_{32} \otimes e_{10}+ 2 e_{21} \wedge e_{31} \otimes e_{01} - 2 e_{21} \wedge e_{32} \otimes h_{01}\\
 e_{11} \wedge e_{32} \otimes e_{11} + 2 e_{21} \wedge e_{32} \otimes e_{01}
 \end{array}\\ \hline
 6 & C & \Gtwo{xw}{-6,2} &
 \begin{array}{l}
 6\alpha_1 + 2\alpha_2\\
 6\alpha_1 + 3\alpha_2\\
 6\alpha_1 + 4\alpha_2\\
 \end{array} & 
 \begin{array}{l}
 -e_{10} \wedge e_{31} \otimes e_{21} 
 - 2 e_{21} \wedge e_{31} \otimes e_{10} 
 + 3 e_{31} \wedge e_{32} \otimes f_{01}\\
 (e_{11} \wedge e_{31} - e_{10} \wedge e_{32}) \otimes e_{21} 
 - 2 e_{21} \wedge e_{32} \otimes e_{10}  
  + 2 e_{21} \wedge e_{31} \otimes e_{11} 
 + 3 e_{31} \wedge e_{32} \otimes h_{01}\\
 e_{11} \wedge e_{32} \otimes e_{21} 
 + 2 e_{21} \wedge e_{32} \otimes e_{11} 
 - 3 e_{31} \wedge e_{32} \otimes e_{01}
 \end{array}\\ \hline
 7 & D & \Gtwo{xw}{-5,1} & 
  \begin{array}{l}
 7\alpha_1 + 3\alpha_2\\
 7\alpha_1 + 4\alpha_2\\
 \end{array} &
 \begin{array}{l}
 (e_{10} \wedge e_{32} - e_{11} \wedge e_{31}) \otimes e_{31} - 2 e_{10} \wedge e_{31} \otimes e_{32} + 6 e_{21} \wedge e_{31} \otimes e_{21} + 9 e_{31} \wedge e_{32} \otimes e_{10} \\
 (e_{11} \wedge e_{31} - e_{10} \wedge e_{32}) \otimes e_{32} - 2 e_{11} \wedge e_{32} \otimes e_{31} + 6 e_{21} \wedge e_{32} \otimes e_{21} - 9 e_{31} \wedge e_{32} \otimes e_{11}
 \end{array}
 \\ \hline
 7 & \CD & \Gtwo{xw}{-8,3} & 
 \begin{array}{l}
 7\alpha_1 + 2\alpha_2\\
 7\alpha_1 + 3\alpha_2\\
 7\alpha_1 + 4\alpha_2\\
 7\alpha_1 + 5\alpha_2
 \end{array} & 
 \begin{array}{l}
 e_{10} \wedge e_{31} \otimes e_{31}\\
 (e_{10} \wedge e_{32} - e_{11} \wedge e_{31}) \otimes e_{31} + e_{10} \wedge e_{31} \otimes e_{32}\\
 (e_{10} \wedge e_{32} - e_{11} \wedge e_{31}) \otimes e_{32} 
 - e_{11} \wedge e_{32} \otimes e_{31} \\
 - e_{11} \wedge e_{32} \otimes e_{32} 
 \end{array}\\ \hline
 8 & E & \Gtwo{xw}{-4,0} & 
 \begin{array}{l}
 8\alpha_1 + 4\alpha_2
 \end{array} & \begin{array}{l} 
 e_{21} \wedge e_{31} \otimes e_{32} - e_{21} \wedge e_{32} \otimes e_{31} - 3 e_{31} \wedge e_{32} \otimes e_{21}
 \end{array}\\ \hline
 8 & \CE & \Gtwo{xw}{-7,2} & 
 \begin{array}{l}
 8\alpha_1 + 3\alpha_2\\
 8\alpha_1 + 4\alpha_2\\
 8\alpha_1 + 5\alpha_2
 \end{array} & 
 \begin{array}{l} 
 e_{21} \wedge e_{31} \otimes e_{31} \\
 e_{21} \wedge e_{32} \otimes e_{31} + e_{21} \wedge e_{31} \otimes e_{32} \\
 e_{21} \wedge e_{32} \otimes e_{32}
 \end{array}\\ \hline
 9 & \CF &  \Gtwo{xw}{-6,1} & 
 \begin{array}{l}
 9\alpha_1 + 4\alpha_2\\
 9\alpha_1 + 5\alpha_2
 \end{array} &
 \begin{array}{l}
 e_{31} \wedge e_{32} \otimes e_{31}\\
 e_{31} \wedge e_{32} \otimes e_{32}
 \end{array}
 \end{array}
 \]

 \begin{center}
 \begin{tabular}{c}
 \begin{tikzpicture}[scale=0.75]
  \coordinate (Origin)   at (0,0);
  \draw[brown] ( 1.15  ,  4) circle (0.2);
  \draw[brown] ( 2.3  ,  4) circle (0.2);
  \draw[brown] ( 2.88  ,  3) circle (0.2);
  \clip (-3.1,-2.1) rectangle (5.8,5.5);
  \pgftransformrotate{60}
  \pgftransformcm{0.8660254}{-0.5}{0.5}{0.8660254}{\pgfpoint{0cm}{0cm}}
  \pgftransformcm{1}{-0.57734}{0}{1.15468}{\pgfpoint{0cm}{0cm}}
  \draw[style=dotted] (-15,-15) grid[step=1cm] (15,15);    
  \pgftransformcm{1}{1}{0}{1}{\pgfpoint{0cm}{0cm}}
  \draw[style=dotted] (-15,-15) grid[step=1cm] (15,15);
  \pgftransformcm{1}{0}{-2}{1}{\pgfpoint{0cm}{0cm}}
  \pgftransformcm{-1}{-1}{3}{2}{\pgfpoint{0cm}{0cm}}


\foreach \i in {0,...,5}
    \pgfmathtruncatemacro{\ifour}{\i+4}
	\draw[gray, xshift=10*\i mm, yshift=6.7*\i mm]  (4,4) to (4,-0.5) node {\ifour};

 \foreach \pt in {(-1,0),(-1,-1),(-2,-1),(-3,-1),(-3,-2)}{
   \draw[->,thick,blue] (0,0) -- \pt;
 }
 \foreach \pt in {(1,0),(0,-1),(0,1),(1,1),(2,1),(3,1),(3,2)}{
   \draw[->,very thick,red] (0,0) -- \pt;
 }
 \foreach \pt in {(4,0),(4,1),(4,2),(4,3),(4,4),(5,1),(5,2),(5,3),(5,4),(6,2),(6,3),(6,4),(7,3),(7,4),(8,4)}{
   \node[draw,circle,inner sep=1.5pt,fill] at \pt {};
 }
 \foreach \pt in {(7,2),(7,5),(8,3),(8,5),(9,4),(9,5)}{
   \node[draw,brown,circle,inner sep=1.5pt,fill] at \pt {};
 }
 \node[above] at (0,1) {$\alpha_2$};  
 \node[right] at (1,0) {$\alpha_1$};
 \node[below] at (4,0) {$4\alpha_1$};
 \end{tikzpicture} 
 \end{tabular}\\
 \end{center}
 \end{framed}
 \end{tiny}
 \caption{The curvature module for $(2,3,5)$-geometries}
 \label{F:CurvMod}
 \end{table}
 
 \begin{prop} \label{P:kappa} Any regular, normal parabolic geometry $(\cG \to M, \omega)$ of type $(G,P) = (G_2,P_1)$ has $\kappa$ valued in $\bbE$ defined above.  In particular, (a) $\kappa$ is torsion-free, and (b) $\kappa(x,y) = 0$, $\forall x,y \in \fg^{-2} / \fp$.
 \end{prop}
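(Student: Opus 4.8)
The plan is to apply Theorem~\ref{T:Cap}(b) to the curvature module $\bbE$, so the first task is to describe $\bbE$ concretely and then to verify that alternative's two hypotheses. To produce $\bbE$ I would generate it from the lowest weight vector $\phi_0 = e_{10}\wedge e_{31}\otimes f_{01}$ of \eqref{E:lwv} under the $P$-action. Acting by the reductive part $\fg_0$ (concretely, iterating the raising operator $e_{01}$) fills out the homogeneity-$4$ constituent, namely the harmonic binary-quartic module $A\cong S^4\fg_1$; acting by the nilradical $\fp_+$ then raises homogeneity and yields the remaining constituents. The generation terminates since a 2-chain in $\bigwedge^2\fp_+\otimes\fg$ has homogeneity at most $3+3+3=9$. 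Carrying out these brackets with Table~\ref{F:G2br} and recording each weight vector produces Table~\ref{F:CurvMod}; because $\phi_0\in\ker(\partial^*)$ and $\ker(\partial^*)^1$ is a $P$-submodule, the entire $\bbE$ lands in $\ker(\partial^*)^1$ automatically.

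With $\bbE$ in hand, I would check the hypotheses of Theorem~\ref{T:Cap}(b). That $\kappa_H$ is valued in $\bbE_0 = \bbE\cap\ker(\Box)$ is immediate: regularity and normality place $\kappa_H$ in $H_2(\fp_+,\fg)^1\cong\ker(\Box)$, which is the single irreducible $\bbU_\mu\cong A$, and the harmonic representatives of $A$ already sit inside $\bbE$, so $\bbE_0 = A$ captures the range of $\kappa_H$. For $\Box(\bbE)\subset\bbE$, I would use that $\Box = \partial\partial^* + \partial^*\partial$ preserves $\ker(\partial^*)$ (if $\partial^* x = 0$ then $\partial^*\Box x = (\partial^*)^2\partial x = 0$), preserves homogeneity, and is $\fg_0$-equivariant; restricted to the pairwise non-isomorphic $\fg_0$-constituents $A,B,C,D,\CD,E,\CE,\CF$ comprising $\bbE$ it therefore acts by a scalar on each (a finite verification, matching Kostant's eigenvalue description), so $\Box(\bbE)\subset\bbE$. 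The insertion-stability condition looks like the delicate one, but here it is free: every weight vector in Table~\ref{F:CurvMod} has its $\fg$-factor in $\fp$, so $\bbE\subset\bigwedge^2\fp_+\otimes\fp$; thus any $\psi\in\bbE$ is $\fp$-valued, whence $\psi(X_0,X_1)+\fp = 0$ in $\fg/\fp$, $\iota_\psi\varphi = 0$, and $\partial^*(\iota_\psi\varphi)=0\in\bbE$. Theorem~\ref{T:Cap}(b) then forces $\kappa$ to take values in $\bbE$.

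It remains to read off (a) and (b). Torsion-freeness (a) is exactly the inclusion $\bbE\subset\bigwedge^2\fp_+\otimes\fp$ used above, so $\kappa$ is valued in $\bigwedge^2(\fg/\fp)^*\otimes\fp$. For (b) I would use $\fg^{-2}/\fp\cong\fg_{-1}\op\fg_{-2}$ together with the Killing identification \eqref{E:KFconvert}, under which the factors $e_{31},e_{32}\in\fg_3$ pair nontrivially only with $\fg_{-3}$ and hence annihilate $\fg_{-1}\op\fg_{-2}$. Every monomial appearing in Table~\ref{F:CurvMod} carries a wedge factor $e_{31}$ or $e_{32}$, so each vanishes on $\bigwedge^2(\fg^{-2}/\fp)$, giving $\kappa(x,y)=0$ for $x,y\in\fg^{-2}/\fp$. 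Rather than relying only on inspection, I would argue this structurally: $\phi_0$ carries the $\fg_3$-factor $e_{31}$; the $\fg_0$-action preserves $\fg_3$, and any further $\fp_+$-raising sends an $\fg_3$-factor into $\fg_{>3}=0$, so the property ``some wedge factor lies in $\fg_3$'' propagates through the whole $P$-orbit generating $\bbE$. The genuine labour is thus the module computation of the first step; the hypotheses of \v{C}ap's theorem, seemingly the crux, hold almost for free once one observes that $\bbE$ is torsion-free and a sum of distinct $\fg_0$-types.
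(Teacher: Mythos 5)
Your proposal is correct and follows essentially the same route as the paper: it invokes Theorem~\ref{T:Cap}(b), verifies $\Box(\bbE)\subset\bbE$ via $G_0$-equivariance and Schur-type scaling on the irreducible constituents, gets insertion-stability for free from $\bbE\subset\bigwedge^2\fp_+\otimes\fp$ (which is also (a)), and deduces (b) from the persistence of a $\fg_3$ wedge factor under the $\fp$-action -- exactly the paper's observation that applying $\fp$ to $\bbE_0\subset\fg_1\wedge\fg_3\otimes\fg_0$ produces no components along $\bigl(\bigwedge^2\fg_1\oplus\fg_1\wedge\fg_2\bigr)\otimes\fp$. The only difference is expository: you spell out the generation of $\bbE$ and the propagation argument in (b) slightly more explicitly than the paper does.
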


 \begin{proof} We proceed as in \cite{Sag2008} for (a). Let $\bbE_0 := \bbE \cap \ker(\Box)$ and $\kappa_H$ is clearly valued in $\bbE_0$ by Kostant's theorem. Since $\bbE$ is a $P$-module, decompose it into $G_0$-irreps.  On each $G_0$-irrep, the $G_0$-equivariant map $\Box$ acts by a scaling, so $\Box(\bbE) \subset \bbE$.  Since $\bbE \subset \bigwedge^2 \fp_+ \otimes \fp$, then $\forall \varphi, \psi \in \bbE$, we clearly have $\iota_\psi \varphi = 0$, so $\bbE$ is stable under $\bbE$-insertions.  Theorem \ref{T:Cap}(2) now yields (a).  Claim (b) follows since $\bbE_0$ is a $\fg_0$-submodule of $\fg_1 \wedge \fg_3 \otimes \fg_0$, and applying $\fp$ to it does not produce terms along $(\bigwedge^2 \fg_1 \otimes \fp) \op (\fg_1 \wedge \fg_2 \otimes \fp)$.
 \end{proof}
 
  In Table \ref{F:CurvMod}, we derive $\bbE$ explicitly, find that $\dim(\bbE) = 24$, and stratify $\bbE$ into $G_0$-irreps, with an $\fsl_2$-adapted basis (as in \eqref{E:sl2basis}) specified in each.  Note that we use the ``minus lowest weight'' convention in the 3rd column, e.g.\ $\Gtwo{xw}{-8,4}$ refers to $8\lambda_1 - 4\lambda_2 = 4\alpha_1$.  The weights in $\bbE$ are indicated on the weight diagram of $G_2$.  Each dot indicates a multiplicity one weight space, while that for a ringed dot has multiplicity two.  Tilded components are colored brown.

 Consider the induced filtration $\bbE = \bbE^4 \supset \bbE^5 \supset ... \supset \bbE^9 \supset \bbE^{10} = 0$ induced from $\bigwedge^2(\fg/\fp)^* \otimes \fg$.  Let $\widetilde\bbE \subset \bbE^7$ be the $P$-module corresponding to the $\CD,\CE,\CF$ components (brown dots).   While the $\fp$-action on $\bbE$ is quite complicated, some parts have a simple abstract description.  Namely,
 \begin{enumerate}
 \item As $\fp$-modules, $\bbE / \bbE^5 \cong H_2(\fp_+,\fg)$, which is identified with binary quartics;
 \item $\fg^2 = \langle e_{21}, e_{31}, e_{32} \rangle$ acts trivially on $\bbE / \widetilde\bbE$ (black dots), which we identify with {\em ternary} quartics. 
 \end{enumerate}
 Corresponding covariant tensors were identified in \cite{Car1910}; see also Appendix \ref{S:curvature} for more details.

 \subsection{Cartan-theoretic descriptions}
 \label{S:AM}
 Suppose that a parabolic geometry $(\cG \to M, \omega)$ of type $(G,P)$ is {\sl homogeneous}, i.e.\ a (local) Lie group acts (on the left) by principal bundle automorphisms preserving $\omega$ with transitive projected action on $M$, so $M \cong F / F^0$ (upon fixing a basepoint $o \in M$).  In this case, we can identify $\cG = F \times_{F^0} P$, and $\omega$ is determined by its value at a point $u \in \cG$ (in the fibre over $o$) \cite{Hammerl2007}. This leads to what we refer to as the {\sl Cartan-theoretic} description of a homogeneous parabolic geometry.
 
 \begin{defn} \label{D:alg-model}
 An {\sl algebraic model $(\ff;\fg,\fp)$} is a Lie algebra $(\ff,[\cdot,\cdot]_\ff)$ such that:
 \begin{enumerate}
 \item[\rm (M1)] $\ff \subset \fg$ is a filtered subspace, with filtrands $\ff^i := \ff \cap \fg^i$, and $\ff / \ff^0 \cong \fg / \fp$.
 \item[\rm (M2)] $\ff^0$ inserts trivially into the {\sl curvature} $\kappa(x,y) := [x,y] - [x,y]_\ff$.
 \item[\rm (M3)] $\kappa \in \bigwedge^2(\ff/\ff^0)^* \otimes \fg \cong \bigwedge^2 (\fg/\fp)^* \otimes \fg$ is regular and normal, i.e.\ $\kappa \in \ker(\partial^*)^1$.
 \end{enumerate}
Define $\kappa_H := \kappa \,\mod \im(\partial^*) \in H_2(\fp_+,\fg)^1$ as the {\sl harmonic curvature}.  When $\kappa = 0$, we say that $(\ff;\fg,\fp)$ is {\sl flat}.  Also, we say that $(\ff;\fg,\fp)$ is {\sl multiply-transitive} if $\dim(\ff^0) > 0$ and {\sl simply-transitive} if $\dim(\ff^0) = 0$.
 \end{defn}
 
 From the Cartan-theoretic description, one can immediately deduce the {\sl Lie-theoretic description}, e.g.\ for $(2,3,5)$, this was described around \eqref{E:LieTh}. The converse passage from Lie-theoretic to Cartan-theoretic descriptions is assured by Theorem \ref{T:fundthm}, and we will give numerous examples of this.
 
 Having fixed $(\fg,\fp,P)$, let $\cM$ be the set of all algebraic models $(\ff;\fg,\fp)$.  Then:
 \begin{enumerate}
 \item $\cM$ is {\em partially ordered}: we declare $\ff \leq \ff'$ when $\ff \inj \ff'$ as Lie algebras.  We are most interested in the {\sl maximal} elements of $(\cM,\leq)$, i.e.\ those $\ff \in \cM$ for which $\ff \leq \ff'$ implies $\ff' = \ff$.
 \item $\cM$ admits a $P$-action via $\ff \mapsto \Ad_p \cdot \ff$ for any $p \in P$.  (Those in the same $P$-orbit are regarded as equivalent.  This arises by varying the point $u \in \cG$ in the same fibre over the basepoint $o \in M$.)
 \end{enumerate}

 The following important facts will be used to carry out classifications -- see \cite[\S 2.4]{The2021} for proofs.
 
  \begin{prop} \label{P:algCC} 
  Let $(\ff;\fg,\fp) \in \cM$.  Then:
 \begin{enumerate}
 \item[(a)] $(\ff,[\cdot,\cdot]_\ff)$ is a filtered Lie algebra.
 \item[(b)] $\ff^0 \cdot \kappa = 0$, i.e.\ $[z,\kappa(x,y)] = \kappa([z,x],y) + \kappa(x,[z,y])$, $\forall x,y \in \ff$, $\forall z \in \ff^0$.
 \item[(c)] Let $\fs := \tgr(\ff)$.  Then $\fs \subset \fa^{\kappa_H}$ is a graded Lie subalgebra.
 \item[(d)] Fix a graded complement $\fs^\perp \subset \fp$ to $\fs$ in $\fg$, and write $\ff = \bigoplus_{i=-\nu}^\nu \{ x + \fd(x) : x \in \fs_i \}$ for some unique linear map $\fd : \fs \to \fs^\perp$ of positive degree.  Fix $T \in \ff^0$ and suppose that $\fs$ and $\fs^\perp$ are $\ad_T$-invariant.  Then $T \cdot \fd = 0$, i.e.\ $\ad_T \circ \fd = \fd \circ \ad_T$.
 \end{enumerate}
 \end{prop}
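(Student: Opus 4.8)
The plan is to dispatch the four claims in order, each reducing to a short structural observation. Parts (a)--(c) hinge on \emph{regularity} of $\kappa$, i.e.\ $\kappa \in \ker(\partial^*)^1$, which I will use in the concrete form that $\kappa$ \emph{raises filtration degree}: since $\ker(\partial^*)^1$ consists of the $2$-cochains on which $\sfZ$ acts with positive eigenvalue, inserting $x \in \fg^i$ and $y \in \fg^j$ into $\kappa$ lands in $\fg^{i+j+1}$. For (a), fix $x \in \ff^i$, $y \in \ff^j$. If $i \geq 0$ or $j \geq 0$, the offending argument lies in $\ff^0$, so $\kappa(x,y) = 0$ by (M2) and $[x,y]_\ff = [x,y] \in \fg^{i+j}$ since $\fg$ is filtered; otherwise $[x,y] \in \fg^{i+j}$ and $\kappa(x,y) \in \fg^{i+j+1} \subset \fg^{i+j}$, so again $[x,y]_\ff = [x,y] - \kappa(x,y) \in \fg^{i+j}$. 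As $[x,y]_\ff \in \ff$, it lies in $\ff \cap \fg^{i+j} = \ff^{i+j}$.

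The crux of (b) is that for $z \in \ff^0$ the two brackets agree on $\ff$: by (M2), $[z,x]_\ff = [z,x] - \kappa(z,x) = [z,x]$ for all $x \in \ff$, so $\ad_z$ (in $\fg$) preserves $\ff$ and restricts there to the inner derivation $[z,\cdot]_\ff$ of $(\ff,[\cdot,\cdot]_\ff)$. Applying $\ad_z$ to $\kappa(x,y) = [x,y] - [x,y]_\ff$ and subtracting the two derivation identities (Jacobi in $\fg$ and in $\ff$) yields
\[
 [z,\kappa(x,y)] = \kappa([z,x],y) + \kappa(x,[z,y]),
\]
which is exactly $z \cdot \kappa = 0$ once identified with the $\fp$-module action on $\bigwedge^2(\fg/\fp)^* \otimes \fg$ (using (M2) so that $\kappa$ descends to $\fg/\fp$).

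For (c), I would first note that $\fs = \tgr(\ff)$ is a \emph{graded} subalgebra of $\fg \cong \tgr(\fg)$: by regularity the leading part of $[x,y]_\ff$ equals that of $[x,y]$ (the correction $\kappa(x,y)$ sits in strictly higher filtration degree), so the bracket induced on $\tgr(\ff)$ coincides with the $\fg$-bracket of leading parts. To get $\fs \subset \fa^{\kappa_H}$, the negative part is immediate since $\ff/\ff^0 \cong \fg/\fp$ forces $\fs_{<0} = \fg_- = \fa^{\kappa_H}_-$. For degree $0$, take $z \in \ff^0$ with leading part $z_0 \in \fg_0$; part (b) gives $z \cdot \kappa_H = 0$, and since $H_2(\fp_+,\fg)$ is completely reducible ($\fp_+$ acts trivially, by Kostant's theorem) this collapses to $z_0 \cdot \kappa_H = 0$, i.e.\ $z_0 \in \fann(\kappa_H) = \fa^{\kappa_H}_0$. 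The positive part follows by induction on $k$: for $x \in \fs_k$ one has $[x,\fg_{-1}] = [x,\fs_{-1}] \subset \fs_{k-1} \subset \fa^{\kappa_H}_{k-1}$, whence $x \in \fa^{\kappa_H}_k$.

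Finally, for (d) the key is that $\ff$ is the graph of $\fd$ relative to the splitting $\fg = \fs \op \fs^\perp$, and that $\ad_T$ preserves all three of $\ff$ (by the part-(b) observation, as $T \in \ff^0$), $\fs$, and $\fs^\perp$ (by hypothesis). For $x \in \fs$ write $\xi = x + \fd(x) \in \ff$; then $\ad_T \xi = \ad_T x + \ad_T \fd(x)$ with $\ad_T x \in \fs$ and $\ad_T \fd(x) \in \fs^\perp$, and since $\ad_T \xi \in \ff$ the graph condition forces its $\fs^\perp$-component to be $\fd$ of its $\fs$-component, i.e.\ $\ad_T \fd(x) = \fd(\ad_T x)$, which is precisely $\ad_T \circ \fd = \fd \circ \ad_T$. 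I expect the main obstacle to be expository rather than mathematical: pinning down the exact sense in which regularity raises filtration degree (so that $\kappa$ is invisible to the leading-part map), since both (a) and (c) rest on this, together with ensuring in (c) that the $\fp_+$-parts of $z$ genuinely drop out of the action on the completely reducible module $H_2(\fp_+,\fg)$.
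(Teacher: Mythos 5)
Your proof is correct; the paper itself does not prove Proposition \ref{P:algCC} but defers to \cite[\S 2.4]{The2021}, and your argument is precisely the standard one those references contain: regularity ($\kappa\in\ker(\partial^*)^1$) raises filtration degree by one, which gives (a) and makes $\kappa$ invisible to $\tgr$ in (c); (M2) makes $\ad_z$ for $z\in\ff^0$ an honest derivation of $[\cdot,\cdot]_\ff$, which gives (b) and, combined with triviality of the $P_+$-action on $H_2(\fp_+,\fg)$, the degree-zero step of (c); and (d) is the graph argument you give. No gaps.
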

 
 Prolongation-rigidity (Lemma \ref{L:PR}) implies $\fa_+^{\kappa_H} = 0$, so property (c) immediately yields:
 
 \begin{cor} \label{C:PR} For $(2,3,5)$, consider $(\ff;\fg,\fp) \in \cM$ with $\kappa_H \neq 0$.  Then $\ff^1 = \ff^2 = \ff^3 = 0$.
 
 \end{cor}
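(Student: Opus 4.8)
The plan is to deduce the vanishing of the filtrands $\ff^1, \ff^2, \ff^3$ from the vanishing of the positive part of the associated graded $\fs := \tgr(\ff)$, which in turn follows from prolongation-rigidity. The pivotal input is Proposition~\ref{P:algCC}(c): it asserts that $\fs$ embeds as a graded Lie subalgebra of the extrinsic Tanaka prolongation $\fa^{\kappa_H} \subset \fg$. Since we assume $\kappa_H \neq 0$, and since by (M3) we have $\kappa_H \in H_2(\fp_+,\fg)^1$, Lemma~\ref{L:PR} applies: $(G_2,P_1)$ is prolongation-rigid, whence $\fa_+^{\kappa_H} = 0$. Intersecting with the subalgebra $\fs$ forces the positive-degree part of $\fs$ to vanish, i.e. $\fs_1 = \fs_2 = \fs_3 = 0$.

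It then remains to transfer this statement about the graded object $\fs$ back to the filtration on $\ff$. For this I would invoke that the $(G_2,P_1)$-grading is concentrated in degrees $-3, \dots, 3$, so $\fg^4 = 0$ and hence $\ff^4 = \ff \cap \fg^4 = 0$. Using the defining identification $\fs_k = \ff^k / \ff^{k+1}$, a short downward induction finishes the argument: $\ff^3 = \ff^3 / \ff^4 = \fs_3 = 0$, then $\ff^2 = \ff^2 / \ff^3 = \fs_2 = 0$, and finally $\ff^1 = \ff^1 / \ff^2 = \fs_1 = 0$.

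Because the genuine content is already packaged into Lemma~\ref{L:PR} and Proposition~\ref{P:algCC}(c), both of which we may assume, there is no real obstacle here; the corollary is an immediate consequence. The only point demanding a little care is the bookkeeping in the final step, namely the boundary condition $\ff^4 = 0$ together with the successive identifications $\ff^k = \fs_k$, which become valid only after the higher filtrands have been shown to vanish.
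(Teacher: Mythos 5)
Your proposal is correct and follows exactly the paper's route: the paper likewise deduces the corollary by combining Proposition~\ref{P:algCC}(c) with prolongation-rigidity (Lemma~\ref{L:PR}) to kill $\fs_+ = \tgr(\ff)_+$, treating the passage back from the graded to the filtered object (via $\fg^4=0$ and downward induction) as immediate. Your extra bookkeeping in the last step is a harmless elaboration of the same argument.
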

 
 The leading parts of $\ff$ are constrained by property (c).  Writing $\ff$ as a graph over $\fs$, and say $x + \fd(x) \in \ff$ for some $x \in \fs_i$, the {\sl deformation map} $\fd$ from (d) is automatically constrained in some cases, e.g.\ when some $T \in \ff^0$ acts in a diagonalizable manner, we can often use this to efficiently constrain $\fd$.
 
 Given $(\ff;\fg,\fp)$, the infinitesimal holonomy algebra $\hol$ of the associated homogeneous Cartan geometry can be efficiently computed (\`a la Ambrose--Singer) \cite{Hammerl2007}.  Namely, define the following subspaces of $\fg$:
 \begin{align} \label{E:hol}
 \hol^0 := \langle \kappa(x,y) : x,y \in \ff \rangle, \qquad
 \hol^i := \hol^{i-1} + [\ff,\hol^{i-1}], \,\,\forall i \geq 1.
\end{align}
Since $\dim(\fg)$ is finite, this increasing sequence necessarily stabilizes to some $\hol^\infty$, and we have $\hol = \hol^\infty$.
 
 Finally, let us discuss automorphisms and anti-involutions.  The latter determine real forms.
 
 \begin{defn}
 An {\sl automorphism} of an algebraic model $(\ff;\fg,\fp)$ is $A \in \Aut(\fg)$ such that $A(\fp) = \fp$ and $A|_\ff \in \Aut(\ff,[\cdot,\cdot]_\ff)$ (or equivalently, $A^* \kappa = \kappa$), and let $\Aut(\ff;\fg,\fp)$ denote all such automorphisms. 
 \end{defn}

 Recall that an anti-involution of a Lie algebra $\fg$ is an anti-linear map $\psi : \fg \to \fg$ with $\psi^2 = \id$ and $\psi([x,y]) = [\psi(x),\psi(y)]$, $\forall x,y \in \fg$.  The fixed point set $\fg^\psi = \{ x \in \fg : \psi(x) = x \}$ is a {\sl real form} of $\fg$.
 
 \begin{defn} \label{D:AI}
  An {\sl anti-involution} of $(\ff;\fg,\fp)$ is an anti-involution $\psi$ of $\fg$ with $\psi(\fp) = \fp$ and $\psi|_\ff$ an anti-involution of $(\ff,[\cdot,\cdot]_\ff)$.  Two such $\psi_1,\psi_2$ are {\sl conjugate} if $\psi_2 = A \circ \psi_1 \circ A^{-1}$ for some $A \in \Aut(\ff;\fg,\fp)$.
 \end{defn}
 
 Since $\fp$ and $\ff$ are stable under (anti-)automorphisms, then so is the canonical $\fp$-invariant filtration on $\fg$ and the induced filtration on $\ff$.

 \section{Multiply-transitive complex $(2,3,5)$-distributions}
 \label{S:MT}
 In this section, our aim is to establish the following result:
 
 \begin{theorem}
 For {\bf complex} $(2,3,5)$-structures, the complete Cartan-theoretic classification of all non-flat, maximal, multiply-transitive algebraic models $(\ff;\fg,\fp)$, up to the $P$-action, is given in Table \ref{F:alg-models}.
 \end{theorem}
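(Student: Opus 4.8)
The plan is to exploit the filtered-subalgebra structure of an algebraic model together with prolongation-rigidity, reducing the classification to a finite case analysis indexed by the root type of the Cartan quartic $\kappa_H$, and then to solve an explicit deformation problem in each surviving case. First I would use non-flatness ($\kappa_H \neq 0$) and Corollary \ref{C:PR} to get $\ff^1 = \ff^2 = \ff^3 = 0$, so that $\fs := \tgr(\ff) = \fg_- \op \fs_0$ with $\fs_0 := \tgr_0(\ff^0)$ and $\fs_+ = 0$. By Proposition \ref{P:algCC}(c), $\fs \subset \fa^{\kappa_H}$, hence $\fs_0 \subseteq \fann(\kappa_H)$ once $\kappa_H$ is normalized, via the $G_0$-part of the $P$-action, to one of the forms in Table \ref{F:RootType}. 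Multiply-transitivity means $\dim(\fs_0) = \dim(\ff^0) \geq 1$, forcing $\dim\fann(\kappa_H) \geq 1$ and thereby discarding root types $\mathsf{I}$ and $\mathsf{II}$; only $\sfN$, $\mathsf{III}$, $\sfD$ remain.

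For each surviving type I would enumerate the admissible $\fs_0 \subseteq \fann(\kappa_H)$ up to the residual stabilizer of $\kappa_H$ in $G_0$: for type $\sfN$ either the full $2$-dimensional $\fann$ (the $\sfN.7$ candidate) or a normalized line (the $\sfN.6$ candidate), while for $\mathsf{III}$ and $\sfD$ the $1$-dimensional $\fann$ is forced. Writing $\ff = \bigoplus_i \{ x + \fd(x) : x \in \fs_i \}$ as a graph over $\fs$ for a deformation map $\fd : \fs \to \fs^\perp$ of positive degree into a fixed graded complement $\fs^\perp \subset \fp$, I would constrain $\fd$ using the grading together with Proposition \ref{P:algCC}(d): whenever $\fs_0$ contains a semisimple $T$, the relation $\ad_T \circ \fd = \fd \circ \ad_T$ forces $\fd$ to preserve $\ad_T$-eigenspaces, and a weight count then kills all but finitely many components. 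Simultaneously, Proposition \ref{P:kappa} confines the full curvature $\kappa$ to the explicit $24$-dimensional module $\bbE$ of Table \ref{F:CurvMod}, which I would use to parametrize $\kappa$ subject to $\fs_0$-invariance (Proposition \ref{P:algCC}(b)).

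The finitely many remaining parameters are then pinned down by requiring $[\cdot,\cdot]_\ff = [\cdot,\cdot] - \kappa$ to satisfy the Jacobi identity together with regularity and normality (M3). I expect this to yield, after normalizing by the residual $P$-action: a one-parameter family in type $\sfN$ recovering $\sfN.7$ (whose modulus should match the classifying invariant of Table \ref{F:MongeModels}) and an isolated $\sfN.6$ model; a one-parameter family in type $\sfD$ giving $\sfD.6$; and \emph{no} consistent model in type $\mathsf{III}$, where I expect the constraints to be contradictory. A final step verifies maximality within $(\cM,\leq)$ and that distinct table entries lie in distinct $P$-orbits.

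The hard part will be this middle-to-final step: organizing the Jacobi-plus-normality system so that (i) no solution is overlooked, (ii) the exclusion of type $\mathsf{III}$ is genuinely rigorous rather than merely apparent, and (iii) the surviving parameters are correctly identified with the classifying invariants, including their excluded values where the root type degenerates or the symmetry jumps. The weight-grading bookkeeping keeps the underlying linear algebra finite, but the quadratic Jacobi constraints spread across several filtration degrees are where the genuine work lies.
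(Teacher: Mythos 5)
Your proposal follows essentially the same route as the paper: eliminate root types $\mathsf{I}$ and $\mathsf{II}$ via $\dim\fann(\kappa_H)\geq 1$, enumerate the admissible $\fs_0\subseteq\fann(\kappa_H)$ for types $\sfN$, $\mathsf{III}$, $\sfD$, normalize with the $P$-action, constrain the deformation map via Proposition \ref{P:algCC}(d) and the curvature via the module $\bbE$ with $\ff^0$-invariance, and then close the system with bracket-closure and Jacobi identities — exactly the toolkit (i)--(vii) the paper deploys, including the predicted outcomes ($\sfN.7_c$ family, isolated $\sfN.6$, $\sfD.6_a$ family, non-existence of $\mathsf{III}.6$). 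The one place your outline is thinnest is the $\sfN.6$ branch, where $\fs_0=\langle f_{01}\rangle$ is nilpotent so the eigenvalue argument of Proposition \ref{P:algCC}(d) is unavailable and the paper must instead generate the basis by iterated brackets and grind through closure and Jacobi relations; you correctly flag this as where the genuine work lies.
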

 
 Note that from Table \ref{F:alg-models}, the filtration \eqref{E:LieTh} is inherited via the inclusion $\ff \subset \fg$, and brackets $[\cdot,\cdot]_\ff$ are immediately obtained, so this provides the Lie-theoretic description of these structures (Table \ref{F:Fstr}).

 \begin{footnotesize}
   \begin{table}[h]
 \[
 \begin{array}{|c|l|l@{}|} \hline
 \mbox{Label} & \mbox{Filtered basis of $\ff$} & \mbox{Curvature $\kappa$ (with $\kappa_i$ terms of homogeneity $i$)}\\ \hline\hline
 \sfN.7_c & 
 \begin{array}{@{}l@{}}
  \,\,\,T = \sfZ_2, \, N = f_{01},\\
  X_1 = f_{10} + c e_{10}, \, X_2 = f_{11},\\
  X_3 = f_{21},\,
  X_4 = f_{31}, \, X_5 = f_{32}
  \end{array} &
 \begin{array}{@{}l}
 \kappa = \kappa_4 = f_{10}^* \wedge f_{31}^* \otimes f_{01}\\
 \mbox{(Classifying invariant: $c^2 \in \bbC$.)}
 \end{array} \\ \hline
 \sfN.6 & 
  \begin{array}{@{}l@{}}
  \,\,N = f_{01},\\
  X_1 = f_{10} + e_{01} + 6 e_{10} + 2 e_{32}, \\
  X_2 = f_{11} + \sfZ_1 - 2\sfZ_2 + 2 e_{31}, \\
  X_3 = f_{21} + 9 e_{10} + 2 e_{21},\\
  X_4 = f_{31} - 2 \sfZ_1 + \sfZ_2 - e_{11} - 4 e_{31}, \\
  X_5 = f_{32} - e_{10}
  \end{array} &
 \begin{array}{@{}l}
 \kappa = 42\kappa_4 - 30 \kappa_5 + 20 \kappa_6 - 4 \kappa_7 + 6 \kappa_8,\qbox{where}\\
 \begin{cases}
 \kappa_4 = f_{10}^* \wedge f_{31}^* \otimes f_{01}\\
 \kappa_5 = f_{10}^* \wedge f_{31}^* \otimes e_{10} - 2 f_{21}^* \wedge f_{31}^* \otimes f_{01}\\
 \kappa_6 = -f_{10}^* \wedge f_{31}^* \otimes e_{21} - 2 f_{21}^* \wedge f_{31}^* \otimes e_{10} + f_{31}^* \wedge f_{32}^* \otimes f_{01}\\
 \kappa_7 =  
 (f_{10}^* \wedge f_{32}^*  - f_{11}^* \wedge f_{31}^*) \otimes e_{31} 
 - 2 f_{10}^* \wedge f_{31}^* \otimes e_{32}\\
 \hspace{0.5in} 
 + 6 f_{21}^* \wedge f_{31}^* \otimes e_{21}
 + 3 f_{31}^* \wedge f_{32}^* \otimes e_{10} \\
 \kappa_8 = f_{21}^* \wedge f_{31}^* \otimes e_{32} - f_{21}^* \wedge f_{32}^* \otimes e_{31} - f_{31}^* \wedge f_{32}^* \otimes e_{21}
 \end{cases}
 \end{array} \\ \hline
 \sfD.6_a & 
 \begin{array}{@{}l@{}}
 \,\,\,T = h_{01} = -\sfZ_1 + 2\sfZ_2,\\
 X_1 = f_{10} + a e_{11} + e_{32},\\
 X_2 = f_{11} + a e_{10} + e_{31}, \\
 X_3 = f_{21} + (a^2 + 1) e_{21},\\
 X_4 = f_{31} + e_{11} + a(a^2 + \tfrac{1}{3}) e_{32},\\
 X_5 = f_{32} + e_{10} + a(a^2 + \tfrac{1}{3}) e_{31}
 \end{array} & 
 \begin{array}{@{}l@{}}
 \kappa = -4 \kappa_4 + \frac{4a}{3} \kappa_6 - 2a^2 \kappa_8,\\
 \begin{cases}
 \kappa_4 = (f_{10}^* \wedge f_{32}^* - f_{11}^* \wedge f_{31}^*) \otimes h_{01} - f_{10}^* \wedge f_{31}^* \otimes e_{01} - f_{11}^* \wedge f_{32}^* \otimes f_{01} \\
 \kappa_6 = (f_{11}^* \wedge f_{31}^* - f_{10}^* \wedge f_{32}^*) \otimes e_{21} - 2 f_{21}^* \wedge f_{32}^* \otimes e_{10}  \\
 \hspace{0.4in} + 2 f_{21}^* \wedge f_{31}^* \otimes e_{11} + f_{31}^* \wedge f_{32}^* \otimes h_{01}\\
 \kappa_8 = f_{21}^* \wedge f_{31}^* \otimes e_{32} - f_{21}^* \wedge f_{32}^* \otimes e_{31} - f_{31}^* \wedge f_{32}^* \otimes e_{21}
 \end{cases} \\
 \mbox{(Classifying invariant: $a^2 \in \bbC$.)}
 \end{array} \\ \hline
 \end{array}
 \]
 \caption{Classification of non-flat multiply-transitive algebraic models $(\ff;\fg,\fp)$}
 \label{F:alg-models}
 \end{table}

 \begin{table}[h]
 \[
 \begin{array}{|c|l|}\hline
 \mbox{Label} & \mbox{Lie bracket on $\ff$, calculated via $[\cdot,\cdot]_\ff = [\cdot,\cdot] - \kappa(\cdot, \cdot)$}\\ \hline\hline
 \sfN.7_c & 
 \begin{array}{c|ccccccc}
 [\cdot,\cdot]_\ff & T & N  & X_1 & X_2 & X_3 & X_4 & X_5\\ \hline
 T & \cdot & -N & \cdot & -X_2 & -X_3 & -X_4 & -2 X_5\\
 N && \cdot & X_2 & \cdot & \cdot & -X_5 & \cdot\\
 X_1 &&&  \cdot & -3c N - 2 X_3 & -2c X_2 + 3 X_4 & -N + cX_3 & \cdot\\
 X_2 &&& & \cdot & -3 X_5 & \cdot & \cdot \\
 X_3 &&&&&\cdot & \cdot & \cdot\\
 X_4 &&&&&&\cdot & \cdot\\
 X_5 &&&&&&&\cdot 
 \end{array}\\ \hline
 \sfN.6 & \begin{array}{c|ccccccc}
 [\cdot,\cdot]_\ff & N  & X_1 & X_2 & X_3 & X_4 & X_5\\ \hline
 N & \cdot & X_2 & -2N & \cdot & -X_5 + N & \cdot\\
 X_1 &&  \cdot & -18N + 2X_1- 2X_3 & -12X_2 + 3 X_4 & -2X_1 + 6X_3 - 42 N & -X_4\\
 X_2 && & \cdot & 27 N - 3 X_5 & - X_2 - X_4 & -N + X_5 \\
 X_3 &&&&\cdot & -60 N + 6 X_3 & \cdot\\
 X_4 &&&&&\cdot & -24 N + 2 X_3 + 4 X_5\\
 X_5 &&&&&&\cdot 
 \end{array}\\ \hline
 \sfD.6_a & \begin{array}{c|cccccc}
 [\cdot,\cdot]_\ff & T & X_1 & X_2 & X_3 & X_4 & X_5\\ \hline
 T & \cdot & X_1 & -X_2 & \cdot & X_4 & -X_5\\
 X_1 &&  \cdot & 3aT - 2X_3 & \,\,\,\,2aX_1 + 3X_4  & \cdot & 6T - aX_3 \\
 X_2 && & \cdot & - 2aX_2 - 3X_5 & - 6 T + aX_3 & \cdot\\
 X_3 &&&&\cdot & -(a^2+3) X_1 & (a^2+3) X_2\\
 X_4 &&&&&\cdot & a(a^2-1) T - 2 X_3 \\
 X_5 &&&&&&\cdot
 \end{array}\\ \hline
 \end{array}
 \]
 \caption{Lie-theoretic structure: $\ff^{-1} / \ff^0 = \langle X_1, X_2 \rangle$, $\ff^{-2} / \ff^{-1} = \langle X_3 \rangle$, $\ff^{-3} / \ff^{-2} = \langle X_4, X_5 \rangle$.}
 \label{F:Fstr}
 \end{table}
 \end{footnotesize}

 From Table \ref{F:RootType} and Proposition \ref{P:algCC}(c), the cases that need to be considered are:
  \begin{align}
 \sfN.7, \quad \sfN.6, \quad \sfD.6, \quad \mathsf{III}.6,
 \end{align}
 where $\dim(\ff)$ has been adjoined to the root type.
 The following tools will be used:
 \begin{enumerate}
 \item[(i)] The structure group $P \leq \Aut(\fg)$ will be used to normalize the inclusion $\ff \subset \fg$.  Indeed, we may assume that the $G_0$-action has already normalized $\kappa_H \in H_2(\fp_+,\fg)$ to correspond to (a multiple of) the representative element specified in Table \ref{F:RootType}.  Then $\fs = \tgr(\ff) \subset \fa^{\kappa_H}$.
 \item[(ii)] When $\ff^0$ satisfies the hypothesis of Proposition \ref{P:algCC}(d), we have $\ff^0 \cdot \fd = 0$, which constrains the inclusion $\ff \subset \fg$.  (In particular, this is valid when $\ff^0$ acts reductively, which is not the case for $\sfN.6$.)
 \item[(iii)] Suppose that $X_1,X_2 \in \ff^{-1}$, with leading parts spanning $\fg_{-1}$, i.e.\ $\fg_{-1} = \langle \tgr_{-1}(X_1), \tgr_{-1}(X_2) \rangle$.  Using Proposition \ref{P:kappa}(b), we can generate elements of $\ff^{-2} \backslash \ff^{-1}$ and $\ff^{-3} \backslash \ff^{-2}$ via:
 \begin{align} \label{E:235br}
 [X_1,X_2]_\ff = [X_1,X_2], \quad
 [X_1,[X_1,X_2]_\ff]_\ff = [X_1,[X_1,X_2]], \quad
 [X_2,[X_1,X_2]_\ff]_\ff = [X_2,[X_1,X_2]].
 \end{align}
 \item[(iv)] $\kappa \in \bbE$, where the curvature module $\bbE$ is given in Table \ref{F:CurvMod}.
 \item[(v)] $\kappa(\ff^0,\cdot) = 0$ and $\ff^0 \cdot \kappa = 0$.
 \item[(vi)] Closure of $\ff$ under the bracket $[\cdot,\cdot]_\ff = [\cdot,\cdot] - \kappa(\cdot,\cdot)$.
 \item[(vii)] Jacobi identities for $[\cdot,\cdot]_\ff$.  Let $\Jac^\ff$ denote the Jacobiator.
 \end{enumerate}
 For (ii) and (v), the following notation will be convenient:
 \begin{framed}
 Given an $\fh$-module $\bbW$ and a weight $\lambda \in \fh^*$, we let $\bbW_{[\lambda]}:= \bigoplus_r \bbW_{r\lambda}$ be the sum of all weight spaces for weights that are (arbitrary) multiples of $\lambda$.
 \end{framed}

 Finally, the $\sfN.7$ and $\sfD.6$ cases will contain residual parameters.  Invariant functions of these parameters under the residual (discrete) structure group are referred to as {\sl classifying invariants}.
 
 \subsection{Non-existence of $\mathsf{III}.6$ structures}
 Assume $(\ff;\fg,\fp) \in \cM$ is type $\mathsf{III}.6$.  We may assume (Table \ref{F:RootType}) that $\kappa_H \leftrightarrow \mathsf{x y^3}$ with weight $4\alpha_1 + \alpha_2$.  Since $\dim(\ff) = 6$, then
 \begin{align}
 \fs &= \tgr(\ff) = \fa^{\kappa_H} = \fg_- \op \fa_0, \qbox{where} \fa_0 = \langle \sfZ_1 - 4\sfZ_2 \rangle.
 \end{align}
 
 \begin{prop} \label{P:III.6}
 Multiply-transitive type $\mathsf{III}.6$ algebraic models do not exist.
 \end{prop}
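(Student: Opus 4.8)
The plan is to pin down the inclusion $\ff \subset \fg$ completely, determine the curvature forced by $\ff^0$-invariance, and then extract a contradiction from closure of $[\cdot,\cdot]_\ff$. Write $h := \sfZ_1 - 4\sfZ_2$ for the generator of $\fa_0$. First I would normalize $T \in \ff^0$: since its leading part is the semisimple $h$ and $\ad_h$ acts on $\fg_+$ with eigenvalues $\{1,-3\}$ on $\fg_1$, $\{-2\}$ on $\fg_2$, $\{-1,-5\}$ on $\fg_3$ — all nonzero — the positive-degree tail of $T$ can be solved away, degree by degree, by a $P_+$-conjugation, so I may assume $T = h$ exactly. Then $\ad_T = \ad_h$ is genuinely semisimple, Proposition \ref{P:algCC}(d) applies, and the deformation $\fd$ preserves $\ad_h$-weights. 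Matching $\ad_h$-weights against the degree-raising constraint $\fd(\fs_i) \subset \fg^{i+1} \cap \fs^\perp$ (with $\fs^\perp \subset \fp$) leaves only two admissible deformations — $f_{10}$ may acquire $e_{31}$ and $f_{31}$ may acquire $e_{10}$ — so that up to the $P$-action
\[ X_1 = f_{10} + c_1 e_{31}, \quad X_2 = f_{11}, \quad X_3 = f_{21}, \quad X_4 = f_{31} + c_4 e_{10}, \quad X_5 = f_{32}, \]
with the remaining leading parts undeformed and $T = h$.

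The decisive observation concerns the curvature. By Proposition \ref{P:kappa} we have $\kappa \in \bbE$, and by Proposition \ref{P:algCC}(b) together with $T = h$ we get $h \cdot \kappa = 0$, i.e.\ $\kappa$ lies in the zero $\ad_h$-weight space of $\bbE$. Tabulating $(m\alpha_1 + n\alpha_2)(h) = m - 4n$ over every weight appearing in Table \ref{F:CurvMod}, one finds that the only solution of $m - 4n = 0$ is $4\alpha_1 + \alpha_2$, occurring with multiplicity one in the harmonic component $A$ (it is exactly the triple-root vector $\sfx\sfy^3$). Hence $\kappa = \lambda\,\phi$ is purely harmonic of homogeneity $4$ for a single scalar $\lambda$, and — the ratios being fixed by $\phi$ — its only nonzero components are $\kappa(X_1,X_4) = \lambda h_{01}$, $\kappa(X_1,X_5) = \lambda f_{01}$, and $\kappa(X_2,X_4) = -\lambda f_{01}$, where $h_{01} = -\sfZ_1 + 2\sfZ_2$.

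It then remains to confront this with closure of $[\cdot,\cdot]_\ff = [\cdot,\cdot] - \kappa$. First, tool (iii) gives $[X_1,[X_1,X_2]_\ff]_\ff = -2[X_1,X_3]$, and requiring $[X_1,X_3] = 3f_{31} - c_1 e_{10} \in \ff$ forces $c_1 = -3c_4$. Next I impose $[X_i,X_j]_\ff \in \ff$ on the three curved brackets. Since $[X_1,X_5]_\ff$ and $[X_2,X_4]_\ff$ lie in $\langle f_{01}\rangle$, and $\ff \cap \fg_0 = \ff^0 = \langle T\rangle$ meets $\langle f_{01}\rangle$ trivially, closure forces $\lambda = c_1$ and $\lambda = -3c_4$, consistently. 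The crucial constraint is closure of $[X_1,X_4]_\ff$, which lands in the Cartan $\fh$: because $h_{01} = -\sfZ_1 + 2\sfZ_2$ is \emph{not} proportional to $T = \sfZ_1 - 4\sfZ_2$, the condition $[X_1,X_4]_\ff \in \langle T\rangle$ is genuine and yields $5c_4 - 3c_1 - 2\lambda = 0$. Substituting $c_1 = \lambda = -3c_4$ collapses this to $20c_4 = 0$, whence all parameters vanish and $\kappa = 0$. This contradicts $\kappa_H \neq 0$ for a type $\mathsf{III}$ structure, proving non-existence.

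I expect the main obstacle to be the second step: establishing that $\ff^0$-invariance collapses $\kappa$ to the single harmonic term. This rests on the full $\ad_h$-weight bookkeeping across the $24$-dimensional module $\bbE$ of Table \ref{F:CurvMod}, rather than on any one bracket computation; once it is in hand, the contradiction is short, the only delicate point being to notice that $h_{01}$ escapes $\ff^0$, so that the $[X_1,X_4]$-closure is not vacuous.
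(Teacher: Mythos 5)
Your proposal is correct and follows essentially the same route as the paper's proof: normalize $T=\sfZ_1-4\sfZ_2$ via the $P_+$-action (possible since $\ad_T$ has no zero eigenvalue on $\fp_+$), constrain the deformation to $\fd(f_{10})=c_1e_{31}$, $\fd(f_{31})=c_4e_{10}$ by weight-matching, observe that $T\cdot\kappa=0$ pins $\kappa$ to the unique weight-$(4\alpha_1+\alpha_2)$ line in $\bbE$ (the paper phrases this as $\kappa\in\bbE_{[4\alpha_1+\alpha_2]}$, which is the same condition since $(4\alpha_1+\alpha_2)(T)=0$), and then derive $c_1=\lambda=-3c_4$ and the final contradiction from closure of $[X_1,X_4]_\ff$ in $\fh$. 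Your closure relations and the resulting equation $5c_4-3c_1-2\lambda=0$ agree with the paper's \eqref{E:III-1}--\eqref{E:III-3} under the dictionary $(a,b,c)=(c_1,c_4,\lambda)$.
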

 
 \begin{proof}
 Let $T \in \ff^0$ with $\tgr_0(T) = \sfZ_1 - 4\sfZ_2$, i.e.\ $T = \sfZ_1 - 4\sfZ_2 + c_{10} e_{10} + ... + c_{32} e_{32}$.  Note $\ad_T|_{\fp_+} = \diag(1,-3,-2,-1,-5)$ in the basis $\{ e_{10}, e_{11}, e_{21}, e_{31}, e_{32} \}$ of $\fp_+$, so using $P_+ = \exp(\ad\, \fp_+)$ (and redefining $\ff$), we may normalize \framebox{$T = \sfZ_1 - 4\sfZ_2 \in \ff^0$}.
(Key here is that zero is not an eigenvalue of $\ad_T|_{\fp_+}$.  Equivalently, no multiple of $4\alpha_1 + \alpha_2$ occurs among $\Delta(\fp_+) := \{ \alpha \in \Delta : \sfZ(\alpha) > 0 \}$.)
 
 Letting $\fs^\perp = \langle \sfZ_1, e_{01}, f_{01} \rangle \op \fp_+$, the decomposition $\fg = \fs \op \fs^\perp$ is $\ad_T$-invariant.  By Proposition \ref{P:algCC}(d), the deformation map $\fd : \fs \to \fs^\perp$ satisfies $T \cdot \fd = 0$, i.e. $\fd \in \bbW_{[4\alpha_1 + \alpha_2]}$, where $\bbW = \fs^* \otimes \fs^\perp$.  This forces
 \begin{align}
 \fd(f_{10}) = a e_{31}, \quad \fd(f_{11}) = 0, \quad \fd(f_{21}) = 0, \quad \fd(f_{31}) = b e_{10}, \quad \fd(f_{32}) = 0,
 \end{align}
 for some $a,b \in \bbC$, so $X_1 := f_{10} + \fd(f_{10}) = f_{10} + a e_{31} \in \ff$, $X_2 := f_{11} \in \ff$, etc.  Also, $\fd(T) = 0$ since $T \in \ff^0$.
 
We have $T \cdot \kappa = 0$ from Proposition \ref{P:algCC}(b), so $\kappa \in \bbE_{[4\alpha_1+\alpha_2]}$.  Referring to $\bbE$ in Table \ref{F:CurvMod}, $\kappa$ must have weight $4\alpha_1 + \alpha_2$, so converting the corresponding weight vector to a 2-cochain via \eqref{E:KFconvert}, we have:
 \begin{align}
 \kappa = c\left( ( f_{10}^* \wedge f_{32}^* - f_{11}^* \wedge f_{31}^*) \otimes f_{01} + f_{10}^* \wedge f_{31}^* \otimes h_{01} \right),
 \end{align}
 where $c \in \bbC^\times$ and $h_{01} = [e_{01},f_{01}] = -\sfZ_1 + 2\sfZ_2$ (Table \ref{F:G2br}).  Now evaluate brackets $[\cdot,\cdot]_\ff = [\cdot,\cdot] - \kappa(\cdot,\cdot)$:
 \begin{align}
 [X_1,X_3]_\ff &= [f_{10} + a e_{31}, f_{21}] = 3f_{31} - a e_{10} = 3 X_4 - (a+3b) e_{10}, \label{E:III-1}\\
 [X_1,X_5]_\ff &= [f_{10} + a e_{31}, f_{32}] - c f_{01} = (a - c) f_{01}, \label{E:III-2}\\
 [X_1,X_4]_\ff &= [f_{10} + a e_{31},f_{31} + be_{10}] - c h_{01} = (a+c-2b) \sfZ_1 - (a+2c-3b) \sfZ_2. \label{E:III-3}
 \end{align}
 Since $\ff$ is closed under $[\cdot,\cdot]_\ff$, then \eqref{E:III-1} and \eqref{E:III-2} imply $c = a = -3b$, while \eqref{E:III-3} becomes $[X_1,X_4]_\ff = \tfrac{4c}{3}(2\sfZ_1 - 3\sfZ_2) \in \ff^0 = \langle T \rangle$, so $c=0$ and $\kappa = 0$, contradicting the type $\mathsf{III}$ hypothesis.
 \end{proof}
 
 In \cite{KT2017}, we gave an efficient proof of Proposition \ref{P:III.6} (alternative to \cite{Car1910}), but the approach used there is not effective for classifying all type $\sfN$ and $\sfD$ cases.  Our proof above is a warm-up for these cases.

 \subsection{Type $\sfN$ structures}
 \label{S:typeN}
 Let $(\ff;\fg,\fp) \in \cM$ be type $\sfN$.  We may assume (Table \ref{F:RootType}) that $\kappa_H \leftrightarrow \sfy^4$ with weight $4\alpha_1$.  Here, we have either $\dim(\ff) = 6$ or $7$, with:
 \begin{align} \label{E:TypeN}
 \fs &= \tgr(\ff) \subset \fa^{\kappa_H} = \fg_- \op \fa_0, \qbox{where} \fa_0 = \langle \sfZ_2, f_{01} \rangle.
 \end{align} 
 \subsubsection{$\sfN.7$ structures}
 \label{S:N7}
 \begin{theorem} \label{T:N7} Any $\sfN.7$ algebraic model is $P$-equivalent to $(\ff;\fg,\fp)$ given in Table \ref{F:alg-models} $(\sfN.7_c)$ for some parameter $c \in \bbC$.  These have classifying invariant $\cI = c^2 \in \bbC$.
 \end{theorem}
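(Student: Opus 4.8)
The plan is to follow the template of the type $\mathsf{III}.6$ argument (Proposition \ref{P:III.6}): normalize $\kappa_H$ with $G_0$, determine $\fs = \tgr(\ff)$, and then whittle down the inclusion $\ff \subset \fg$ with the structure group, while carefully tracking the one residual direction that produces the modulus. Since the model is type $\sfN$ with $\dim \ff = 7$, \eqref{E:TypeN} gives $\fs = \fg_- \op \langle \sfZ_2, f_{01}\rangle$, so after putting $\kappa_H \leftrightarrow \sfy^4$ we may write $\ff^0 = \langle T, N\rangle$ with $\tgr_0(T) = \sfZ_2$ and $\tgr_0(N) = f_{01}$. I would first normalize $T$ by conjugating with $P_+$. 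Here $\ad_{\sfZ_2}$ acts on $\fp_+ = \langle e_{10}, e_{11}, e_{21}, e_{31}, e_{32}\rangle$ with eigenvalues $(0,1,1,1,2)$, so --- in contrast to the $\mathsf{III}.6$ case, where every eigenvalue was nonzero --- the normalization only removes the tail of $T$ along the nonzero eigenspaces, leaving $T = \sfZ_2 + \lambda e_{10}$ for some $\lambda \in \bbC$. The zero eigenvalue along $e_{10}$ is precisely the feature responsible for the forthcoming parameter.

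The crucial point is that closure forces $\lambda = 0$, so that $T$ is in fact semisimple. Indeed, since $\kappa(\ff^0,\cdot) = 0$ we have $[T, X_1]_\ff = [T, X_1]$ for any $X_1 \in \ff^{-1}$ with leading term $f_{10}$; its degree $-1$ component is $[\sfZ_2, f_{10}] = 0$, so $[T, X_1]_\ff$ lies in $\ff^0$. But the only $\sfZ_1$-contribution to $[T, X_1]$ comes from $\lambda[e_{10}, f_{10}] = \lambda(2\sfZ_1 - 3\sfZ_2)$, whereas every element of $\ff^0$ (whose generators have leading parts $\sfZ_2, f_{01}$ and whose deformation tails lie in $\fp_+$) has vanishing $\sfZ_1$-component; hence $2\lambda = 0$. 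With $T = \sfZ_2$ in hand, Proposition \ref{P:algCC}(b) gives $T\cdot\kappa = 0$, i.e.\ $\kappa$ has $\sfZ_2$-weight zero; scanning Table \ref{F:CurvMod}, the unique weight in $\bbE$ with vanishing $\alpha_2$-coefficient is $4\alpha_1$, so $\kappa$ is a nonzero multiple of $\kappa_4 = f_{10}^* \wedge f_{31}^* \otimes f_{01}$.

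Next I would pin down the inclusion. Taking $\fs^\perp = \langle \sfZ_1, e_{01}\rangle \op \fp_+$, both $\fs$ and $\fs^\perp$ are $\ad_{\sfZ_2}$-invariant, so Proposition \ref{P:algCC}(d) makes the deformation map $\ad_{\sfZ_2}$-equivariant. As $\fs^\perp$ carries no negative $\ad_{\sfZ_2}$-weights while $f_{11}, f_{21}, f_{31}, f_{01}$ have weight $-1$ and $f_{32}$ has weight $-2$, the map $\fd$ annihilates all of these (in particular $N = f_{01}$), and it confines $\fd(f_{10})$ to the weight-zero space $\langle \sfZ_1, e_{10}\rangle$, say $\fd(f_{10}) = p\sfZ_1 + q e_{10}$. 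The residual $\exp(s\,\ad_{e_{10}}) \in P_+$ fixes $T, N$ and $\kappa_4$, and (after absorbing the resulting $\fg_0$-term back into $\ff^0$) sends $p \mapsto p + 2s$; choosing $s = -p/2$ eliminates the $\sfZ_1$-term and yields $X_1 = f_{10} + c\,e_{10}$ together with $X_2, \dots, X_5 = f_{11}, f_{21}, f_{31}, f_{32}$. A direct verification of closure and the Jacobi identity then confirms $(\ff;\fg,\fp)$ is a genuine model with brackets as in Table \ref{F:Fstr}.

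The step I expect to be most delicate is the final scale normalization and the extraction of the invariant. Fixing the coefficient of $\kappa_4$ to $1$ consumes the last continuous freedom, the dilation $\exp(t\,\ad_{\sfZ_1})$, which multiplies $\kappa_4$ by $e^{4t}$ and $c$ by $e^{2t}$; once this is frozen, the leftover discrete element $\exp(\tfrac{\pi i}{2}\,\ad_{\sfZ_1})$ preserves $\kappa_4$ (since $e^{2\pi i} = 1$) but sends $c \mapsto -c$ (since $e^{\pi i} = -1$). The real work is to check that the entire residual stabilizer in $P$ of the normalized data $(\kappa = \kappa_4,\, T = \sfZ_2,\, N = f_{01})$ --- including the connected subgroups $\exp(s\,\ad_{\sfZ_2})$ and $\exp(r\,\ad_{f_{01}})$, the latter of which perturbs the leading term of $X_1$ and must be compensated by a $G_0$-rotation --- acts on the parameter only through this sign. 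Establishing this shows that $\sfN.7_c$ and $\sfN.7_{c'}$ are $P$-equivalent precisely when $c^2 = c'^2$, so that $\cI = c^2$ is a complete classifying invariant.
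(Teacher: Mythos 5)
Your proposal is correct and follows essentially the same route as the paper's proof: normalize $T$ with the $P_+$-action, force $T=\sfZ_2$ by closure of $[\cdot,\cdot]_\ff$, constrain the deformation map by $\ad_T$-equivariance, identify $\kappa$ as a multiple of the unique $\sfZ_2$-weight-zero vector $\kappa_4$ in $\bbE$, and rescale with $\exp(t\,\ad_{\sfZ_1})$ to extract $c^2$. The differences are cosmetic reorderings (the paper removes the $\fh$- and $e_{01}$-components of $X_1$ before invoking Proposition \ref{P:algCC}(d), whereas you let the weight argument handle $e_{01}$ and kill the $\sfZ_1$-term afterwards via the residual $\exp(s\,\ad_{e_{10}})$), and the final stabilizer check you flag but do not execute is precisely what the paper delegates to Table \ref{F:Aut-algmodel}.
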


 \begin{proof}
 Necessarily, $\fs = \tgr(\ff) = \fa^{\kappa_H}$.  Let $T,N \in \ff^0$ with $\tgr_0(T) = \sfZ_2$ and $\tgr_0(N) = f_{01}$.  Since $\sfZ_2(\alpha) \neq 0$, $\forall\alpha \in \Delta(\fp_+) \backslash \{ \alpha_1 \}$, we normalize to $T = \sfZ_2 + a e_{10}$ using the $P_+$-action.  This reduces $P_+$ to $\exp(\ad\,\fg_{\alpha_1})$.
 
 Let $X_1 \in \ff^{-1}$ with $\tgr_{-1}(X_1) = f_{10}$.  We use $\exp(\ad\,\fg_{\alpha_1})$ to remove the $h_{10} = 2\sfZ_1 - 3\sfZ_2$ component from $X_1$.  Taking linear combinations of $X_1$ with $T$ and $N$, we may assume that $X_1 \equiv f_{10} + b e_{01}\,\, \mod \fp_+$.  Since $\kappa(T,\cdot) = 0$, then $[T,X_1]_\ff = [T,X_1] \equiv ah_{10} + be_{01} \,\, \mod \fp_+$, so $[T,X_1]_\ff \in \ff^0$.  But $\tgr_0([T,X_1]_\ff) \cap \fa_0 = 0$, so $a=b=0$.  Thus, $[T,X_1]_\ff \in \ff^1 = 0$, so \framebox{$T = \sfZ_2 \in \ff^0$} and \framebox{$X_1 = f_{10} + c e_{10}$}.

 Letting $\fs^\perp = \langle \sfZ_1, e_{01} \rangle \op \fp_+$, the decomposition $\fg = \fs \op \fs^\perp$ is $\ad_T$-invariant.  By Proposition \ref{P:algCC}(d), we have $T \cdot \fd = 0$, so $\fd \in (\fs^* \otimes \fs^\perp)_{[\alpha_1]} = \langle \sfZ_2^* \otimes e_{10}, f_{10}^* \otimes e_{10} \rangle$.  (Note: all weights of $\fs^\perp$ are non-negative.)  Since $\fd(\sfZ_2) = 0$, we obtain the claimed basis for $\ff$ in Table \ref{F:alg-models} $(\sfN.7_c)$.

 By Proposition \ref{P:algCC}(2), we have $T \cdot \kappa = 0$, so $\kappa \in \bbE_{[\alpha_1]}$.  From Table \ref{F:CurvMod}, $\kappa$ is a nonzero multiple of $\kappa_4 = f_{10}^* \wedge f_{31}^* \otimes f_{01}$ (with weight $4\alpha_1$).  Using $\exp(\ad_{t\sfZ})$, we can arrange $\kappa = \kappa_4$.
  
  In Table \ref{F:Aut-algmodel}, $\Aut(\ff;\fg,\fp)$ is described.  Consider $A_\zeta \in \Aut(\fg)$, extended from $A_\zeta \begin{psm} f_{10} \\ f_{11} \end{psm} = \begin{psm} \zeta f_{10} \\ f_{11} \end{psm}$, which induces $(\kappa_H,c) \mapsto (\frac{\kappa_H}{\zeta^4}, \frac{c}{\zeta^2})$.  Setting $\zeta^4=1$ preserves $\kappa_H$, and $c^2$ is the classifying invariant.
  \end{proof}
   
 The $\sfN.7$ family is described by Cartan in \cite[\S 44]{Car1910} in terms of structure equations for an abstract coframing $(\omega_1,...,\omega_5,\varpi_1,\varpi_2)$ with parameter ${\bf I}$.  Here is a dictionary from the dual framing to our basis:
 \begin{align}
 (\partial_{\omega_1},...,\partial_{\omega_5}, \partial_{\varpi_1}, \partial_{\varpi_2}) = 
 \left(9 \left(a \zeta\right)^3 X_5, 3 (a \zeta) X_4, X_3, -3 \left(a \zeta\right)^3 X_2, a \zeta X_1, -T,  -3(a \zeta)^2 N \right),
 \end{align}
 where $a = 6^{-1/4}$, $\zeta^4 = 1$, ${\bf I}=\frac{\sqrt{6}}{2} \zeta^2 c$, and classifying invariant ${\bf I}^2 = \tfrac{3}{2} c^2$.
 \subsubsection{$\sfN.6$ structures}
 \label{S:N6}

Among multiply-transitive cases, $\sfN.6$ is the most challenging, and indeed was overlooked in \cite{Car1910}.  Models in this branch were discovered by Strazzullo \cite[Model 6.7.2]{Str2009} and Doubrov--Govorov \cite{DG2013}, and they are equivalent.  We now prove uniqueness of an $\sfN.6$ structure and match the Monge coordinate description \eqref{E:N6-Monge} (due to Doubrov--Govorov) to its Lie-theoretic and Cartan-theoretic descriptions.
 
 \begin{lemma} \label{L:N6}  Let $(\ff;\fg,\fp)$ be an $\sfN.6$ algebraic model.  Writing $\ff^0 = \langle N \rangle$, we may assume that $N:= f_{01}$.
 \end{lemma}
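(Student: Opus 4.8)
The plan is to first locate the leading term $\tgr_0(N)$ using \eqref{E:TypeN} and then to strip off the higher-order part. Since $\ff/\ff^0\cong\fg/\fp$ is $5$-dimensional while $\dim\ff=6$, we have $\dim\ff^0=1$ and $\ff^0=\langle N\rangle$. By Proposition \ref{P:algCC}(c) and \eqref{E:TypeN}, $\fs=\tgr(\ff)=\fg_-\op\fs_0$ with $\fs_0=\langle\tgr_0(N)\rangle\subset\fa_0=\langle\sfZ_2,f_{01}\rangle$, so I write $\tgr_0(N)=\lambda\sfZ_2+\mu f_{01}$. As $\exp(\ad_{sf_{01}})$ fixes $\kappa_H\leftrightarrow\sfy^4$ (because $f_{01}\cdot\sfy^4=0$) while sending $\sfZ_2\mapsto\sfZ_2+sf_{01}$, the leading term is equivalent to $\sfZ_2$ when $\lambda\neq0$ and to $f_{01}$ when $\lambda=0$; these are genuinely distinct, since $\sfZ_2$ is semisimple and $f_{01}$ nilpotent in $\fg_0$.

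The heart of the argument is to exclude $\tgr_0(N)=\sfZ_2$ by showing it never produces a maximal model. Assuming $\tgr_0(N)=\sfZ_2$, I would first borrow the opening of Theorem \ref{T:N7}: the $P_+$-action brings $N$ to $\sfZ_2+ae_{10}$, and testing $[N,X_1]_\ff=[N,X_1]$ against $\fa_0$ forces $a=0$, so $N=\sfZ_2$ exactly; then $N\cdot\kappa=0$ leaves $\kappa=\lambda\kappa_4$ with $\lambda\neq0$, as $4\alpha_1$ is the only weight in $\bbE$ of $\sfZ_2$-weight zero. Now $N=\sfZ_2$ is semisimple, so Proposition \ref{P:algCC}(d) applies to the $\ad_{\sfZ_2}$-invariant complement $\fs^\perp=\langle\sfZ_1,e_{01},f_{01}\rangle\op\fp_+$; the constraint $N\cdot\fd=0$ together with $\sfZ_2$-weights pins the basis to $X_1=f_{10}+ce_{10}$, $X_2=f_{11}+b''f_{01}$, $X_3=f_{21}+gf_{01}$, $X_4=f_{31}+df_{01}$, $X_5=f_{32}$. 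Adjoining $f_{01}$ then gives $\ff^+:=\ff+\langle f_{01}\rangle=\langle\sfZ_2,f_{01},f_{10}+ce_{10},f_{11},f_{21},f_{31},f_{32}\rangle$, which is closed under $[\cdot,\cdot]_\fg-\lambda\kappa_4$ and, after rescaling $\lambda$ away via $\exp(\ad_{t\sfZ})$, is a seven-dimensional type-$\sfN.7$ model in the sense of Theorem \ref{T:N7}. Thus $\ff\subsetneq\ff^+$, contradicting maximality, and I conclude $\lambda=0$; rescaling $N$ then normalizes $\tgr_0(N)=f_{01}$.

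It remains to remove the $\fp_+$-part. Writing $N=f_{01}+w$ with $w\in\fp_+$, one computes $[\fp_+,f_{01}]=\langle e_{10},e_{31}\rangle$, and since the $P_+$-orbit of $f_{01}$ has exactly this $2$-dimensional tangent, the $P_+$-action clears the $e_{10}$- and $e_{31}$-components, leaving $N=f_{01}+c_2e_{11}+c_3e_{21}+c_5e_{32}$. The remaining coefficients fall to closure: using $\kappa(N,\cdot)=0$, I would evaluate $[N,X_i]_\ff=[N,X_i]$ and force its leading part into $\fs$. For example $[N,X_2]$ with $\tgr_{-1}(X_2)=f_{11}$ lies in $\fg^0$, hence in $\ff^0=\langle f_{01}\rangle$, and its degree-zero part is $[f_{01},d_2]+c_2(-\sfZ_1+3\sfZ_2)$; as $[f_{01},\fg_0]\subset\langle h_{01},f_{01}\rangle$ carries no $\sfZ_1$, matching the $\sfZ_1$-component yields $c_2=0$. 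Parallel bracket-and-weight computations dispose of $c_3$ and $c_5$, giving $N=f_{01}$.

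The main obstacle is the non-reductivity noted in tool (ii): once $\tgr_0(N)=f_{01}$ is nilpotent, I can invoke neither Proposition \ref{P:algCC}(d) nor any $\ad_N$-eigenvalue decomposition, so each normalization must be done by hand. Accordingly I expect the conceptual crux to be the exclusion of the semisimple branch---verifying that $\ff^+$ is a genuine model and that $\ff$ sits inside it properly, so that maximality is violated---while the elimination of $c_2,c_3,c_5$ by closure and $\fh$-weight bookkeeping is the main computational load.
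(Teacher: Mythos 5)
Your overall route is the paper's: split on whether $\tgr_0(N)$ is $\sfZ_2$ or $f_{01}$, exclude the semisimple branch by reconstructing the model and observing it extends to an $\sfN.7$ model (hence is non-maximal), normalize $N=f_{01}+c_2e_{11}+c_3e_{21}+c_5e_{32}$ using $\exp(\ad\,\langle e_{11},e_{32}\rangle)$ since $[\fp_+,f_{01}]=\langle e_{10},e_{31}\rangle$, and then kill the residual coefficients by closure. The first three stages match the paper essentially verbatim and are sound.

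The last stage, however, has a genuine gap as you describe it. Your model computation for $c_2$ works: the degree-zero part of $[N,X_2]_\ff\in\ff^0$ is $[f_{01},(X_2)_0]+c_2(-\sfZ_1+3\sfZ_2)$ with $[f_{01},\fg_0]\subset\langle f_{01},h_{01}\rangle$, and since $h_{01}=-\sfZ_1+2\sfZ_2$ and $-\sfZ_1+3\sfZ_2$ are independent in $\fh$, both coefficients vanish. (Note your stated reason is off: $h_{01}$ \emph{does} carry a $\sfZ_1$-component; what saves you is linear independence.) But the ``parallel'' computation at degree one does \emph{not} dispose of $c_3$: with $c_2=0$ one gets $[f_{01},(X_2)_1]+c_3[e_{21},f_{11}]=-\mu e_{10}+2c_3e_{10}$, where $\mu$ is the unknown $e_{11}$-coefficient of $(X_2)_1$. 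This only relates $\mu$ to $c_3$ and forces nothing; the same failure occurs at degree two for $c_5$. The missing idea is the paper's choice of the \emph{iterated} bracket $[N,[N,X_1]_\ff]_\ff\in\ff^0$: because $(\ad_{f_{01}})^2$ annihilates $\fp_+$ and maps $\fg_0$ into $\langle f_{01}\rangle$, all unknown higher-order corrections of $X_1$ drop out modulo $\langle f_{01}\rangle$, and the surviving terms $-4s\sfZ_1+9s\sfZ_2$, $4te_{10}$, $-ue_{21}$ at degrees $0,1,2$ cleanly force $s=t=u=0$. Equivalently, you must take $X_2:=[N,X_1]_\ff$ (so that $(X_2)_1$ is itself expressed through $X_1$ and the cancellation occurs), rather than an arbitrary lift of $f_{11}$; with that substitution your argument becomes the paper's.
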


 \begin{proof} Letting $\fs = \tgr(\ff)$, we have $\dim(\fs_0) = 1$.  There are two inequivalent 1-dimensional subalgebras $\fs_0 \subset \fa_0^{\kappa_H} := \langle \sfZ_2, f_{01} \rangle$, namely $\fs_0 = \langle \sfZ_2 \rangle$ or $\fs_0 = \langle f_{01} \rangle$.  Assuming the former, we proceed as in the proof of Theorem \ref{T:N7}, obtaining $T, X_1 \in \ff$ with $T = \sfZ_2 + a e_{10}$ and $X_1 \equiv f_{10} + b e_{01} + c f_{01} \,\,\mod \fp_+$.  Then
 \begin{align}
 \ff \ni [T,X_1]_\ff = [T,X_1] \equiv a(2\sfZ_1 - 3\sfZ_2) + b e_{01} - c f_{01}\quad \mod \fp_+, 
 \end{align}
 so closure implies $a=b=c=0$, hence $T = \sfZ_2 \in \ff^0$.  We can now continue as in the proof of Theorem \ref{T:N7}, obtaining $\ff = \langle T, X_1, X_2, X_3, X_4, X_5 \rangle$ with $X_i$ defined there.  This is a non-maximal description of an $\sfN.7$ model (namely, we may simply extend it by $N = f_{01}$), so we may exclude this possibility.

 So we start with $\fs = \fg_- \op \langle f_{01} \rangle \subset \fa^{\kappa_H}$.  Let $N \in \ff^0$ with $\tgr_0(N) = f_{01}$.  Applying $\exp(\ad_{\langle e_{11}, e_{32} \rangle})$ to $N$, we may assume that $N = f_{01} + s e_{11} + t e_{21} + u e_{32}$. Now take $X_1 \in \ff^{-1}$ with $\tgr_{-1}(X_1) = f_{10}$.  We compute $[N,[N,X_1]_\ff]_\ff = [N,[N,X_1]] \in \ff$ (and simplify it)  several times:
 \begin{align}
 \begin{split}
 \ff \ni [N,[N,X_1]] &\equiv -4 s\sfZ_1 + 9 s \sfZ_2 \quad \mod \langle f_{01} \rangle \op \fg^1 \qRa s = 0;\\
 &\equiv 4t e_{10} \hspace{0.74in} \mod \langle f_{01} \rangle\op\fg^2  \qRa t = 0;\\
 &\equiv -u e_{21} \hspace{0.67in} \mod \langle f_{01} \rangle\op\fg^3  \qRa u = 0. \quad \therefore \framebox{$N = f_{01} \in \ff^0$}.
 \end{split}
 \end{align}
 \end{proof}
 
   From Table \ref{F:CurvMod}, the curvature condition $N \cdot \kappa = 0$ forces $\kappa$ to be a linear combination of the lowest $\fg_0$-weight vectors in $\bbE$, i.e.\ $\kappa = u_4 \kappa_4 + u_5 \kappa_5 + ... + \tilde{u}_8 \tilde\kappa_8 + \tilde{u}_9 \tilde\kappa_9$, where:
 \begin{align}
 \begin{split} \label{E:N6kappa}
 \kappa_4 &= f_{10}^* \wedge f_{31}^* \otimes f_{01}, \quad
 \kappa_5 = f_{10}^* \wedge f_{31}^* \otimes e_{10} - 2 f_{21}^* \wedge f_{31}^* \otimes f_{01},\\
\kappa_6 &= -f_{10}^* \wedge f_{31}^* \otimes e_{21} - 2 f_{21}^* \wedge f_{31}^* \otimes e_{10} + f_{31}^* \wedge f_{32}^* \otimes f_{01},\\
\kappa_7 &= (f_{10}^* \wedge f_{32}^* - f_{11}^* \wedge f_{31}^*) \otimes e_{31} - 2 f_{10}^* \wedge f_{31}^* \otimes e_{32} + 6 f_{21}^* \wedge f_{31}^* \otimes e_{21} + 3 f_{31}^* \wedge f_{32}^* \otimes e_{10},\\
\tilde\kappa_7 &= f_{10}^* \wedge f_{31}^* \otimes e_{31}, \quad
\kappa_8 = f_{21}^* \wedge f_{31}^* \otimes e_{32} - f_{21}^* \wedge f_{32}^* \otimes e_{31} - f_{31}^* \wedge f_{32}^* \otimes e_{21},\\
\tilde\kappa_8 &= f_{21}^* \wedge f_{31}^* \otimes e_{31}, \quad
\tilde\kappa_9 = f_{31}^* \wedge f_{32}^* \otimes e_{31}.
 \end{split}
 \end{align}
 
 We now continue from Lemma \ref{L:N6} and complete the classification in the $\sfN.6$ branch.  Computationally, the proof becomes quite tedious to do by-hand (particularly invoking the Jacobi identity \eqref{E:N6Jac}).
 
 \begin{theorem} \label{T:N6}
 Up to $P$-equivalence, there is a {\bf unique} $\sfN.6$ algebraic model, given in Table \ref{F:alg-models} $(\sfN.6)$.
 \end{theorem}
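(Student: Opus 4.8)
The plan is to continue directly from Lemma~\ref{L:N6}, where $N := f_{01} \in \ff^0$ has been fixed and $\ff^0 = \langle N \rangle$, and to pin down the remaining five filtered basis vectors $X_1,\dots,X_5$ together with the curvature $\kappa$. Since $N \in \ff^0$, Proposition~\ref{P:algCC}(b) gives $N \cdot \kappa = 0$, so $\kappa$ is a linear combination of the eight lowest-$\fg_0$-weight vectors $\kappa_4,\kappa_5,\kappa_6,\kappa_7,\tilde\kappa_7,\kappa_8,\tilde\kappa_8,\tilde\kappa_9$ of the $G_0$-irreps of $\bbE$ (Table~\ref{F:CurvMod}), as recorded in \eqref{E:N6kappa}. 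The decisive structural feature, and the reason $\sfN.6$ is harder than the reductive branches, is that $\ff^0 = \langle f_{01}\rangle$ acts \emph{nilpotently}: Proposition~\ref{P:algCC}(d) yields no constraint whatsoever on the deformation map $\fd : \fs \to \fs^\perp$, where $\fs = \tgr(\ff) = \fg_- \oplus \langle f_{01}\rangle$. One must therefore begin with the full freedom of $\fd$ on the degree-$(-1)$ generators and cut it down directly.

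First I would parametrize $X_1,X_2 \in \ff^{-1}$ as graphs with leading parts $f_{10}, f_{11}$ and general deformation terms valued in a fixed graded complement $\fs^\perp$. The residual structure group fixing $N = f_{01}$ is generated by $\exp(\ad_{t\sfZ})$, the lower unipotent $\exp(\ad_{s f_{01}})$, and $\exp(\ad_{\langle e_{10}, e_{21}, e_{31}\rangle})$, the latter being the exponential of the centralizer of $f_{01}$ inside $\fp_+$; together with the freedom of adding multiples of $N$ and of the lower filtrands, I would use these to reduce the deformation coefficients of $X_1,X_2$ to a normal form and to fix an overall scale. Crucially, every wedge-factor of every component of $\bbE$ carries a degree-$3$ element ($e_{31}$ or $e_{32}$), so $\kappa$ vanishes on $\fg^{-1}\wedge\fg^{-1}$ and on $\fg^{-1}\wedge\fg^{-2}$ (this is Proposition~\ref{P:kappa}(b)); hence, as in \eqref{E:235br}, the lower basis vectors are generated with \emph{no} curvature correction: $X_3 := [X_1,X_2]_\ff = [X_1,X_2]$, and $X_4,X_5$ arise as the two iterated brackets $[X_i,[X_1,X_2]]$.

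Next I would impose the defining closure and compatibility conditions. Closure of $\ff$ under $[\cdot,\cdot]_\ff = [\cdot,\cdot]-\kappa(\cdot,\cdot)$ for the remaining brackets, namely those pairing a degree-$(-3)$ vector with another generator where $\kappa$ now genuinely contributes, forces the $\fs^\perp$-components of each $[X_i,X_j]_\ff$ to vanish; this produces polynomial relations among the surviving deformation entries and the curvature coefficients $u_4,\dots,\tilde u_9$. Simultaneously, the linear conditions $[N,X_i]_\ff = [f_{01},X_i] \in \ff$ constrain how the deformation interacts with the $\fsl_2$-lowering action. Finally, one imposes that the Jacobiator $\Jac^\ff$ for $[\cdot,\cdot]_\ff$ vanish; this is where essentially all of the remaining rigidity is concentrated, and where the computation becomes genuinely heavy.

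I expect the main obstacle to be precisely this last system. With $\fd$ unconstrained by reductivity, one carries a large number of unknowns (the deformation entries plus the eight curvature coefficients) through the quadratic Jacobi conditions, and the elimination is intricate enough that it is cleanest to organize it filtration-degree by filtration-degree, equivalently weight by weight, and to corroborate it with computer algebra. The payoff is that the combined closure-plus-Jacobi system turns out to admit, up to the residual ($P$-)equivalences, a single solution with \emph{no} surviving continuous modulus; in particular the tilded coefficients $\tilde u_7, \tilde u_8, \tilde u_9$ drop out, and one reads off the basis and curvature $\kappa = 42\kappa_4 - 30\kappa_5 + 20\kappa_6 - 4\kappa_7 + 6\kappa_8$ of Table~\ref{F:alg-models}~$(\sfN.6)$. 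This rigidity, in contrast to the one-parameter families of $\sfN.7$ and $\sfD.6$, is exactly what establishes uniqueness.
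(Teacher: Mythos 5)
Your overall strategy coincides with the paper's: fix $N=f_{01}$, write $X_1$ with a general tail, generate $X_2,\dots,X_5$ by iterated brackets (which indeed carry no curvature correction, since every component of $\bbE$ involves $f_{31}^*$ or $f_{32}^*$ and hence $\kappa$ vanishes on $\fg^{-2}\wedge\fg^{-2}$), take $\kappa$ in the span of the lowest-weight vectors \eqref{E:N6kappa}, and grind through closure and Jacobi. Your observations that Proposition \ref{P:algCC}(d) is vacuous here and that the centralizer of $f_{01}$ in $\fp_+$ is $\langle e_{10},e_{21},e_{31}\rangle$ are both correct and are exactly what the paper exploits.

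The genuine gap is your concluding claim that the closure-plus-Jacobi system admits ``a single solution with no surviving continuous modulus''. It does not: the elimination (as carried out in the paper) first yields $t_1t_2=0$, where $t_2$ is the $e_{01}$-coefficient of $X_1$, and the branch $t_2=0$ produces a consistent family of algebraic models $(X_1,\dots,X_5)=(f_{10}+t_1\sfZ_2+t_3e_{10},\,f_{11},\,f_{21},\,f_{31},\,f_{32})$ with $\kappa=u_4\kappa_4$, satisfying every closure and Jacobi condition. These are not $P$-equivalent to the model of Table \ref{F:alg-models} (their curvature is purely harmonic), so uniqueness cannot be read off from the algebraic system alone. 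The paper disposes of this branch by a \emph{maximality} argument: such an $\ff$ injects into $\ff\oplus\langle\sfZ_2\rangle$, which is an $\sfN.7$ model, so it is non-maximal and excluded from the classification. Without this case division and the non-maximality exclusion, your argument establishes uniqueness only within the nondegenerate branch $t_2\neq 0$. (A smaller imprecision: the residual torus stabilizing $\langle f_{01}\rangle$ is two-dimensional, $\langle\sfZ_1,\sfZ_2\rangle$, not just $\langle\sfZ\rangle$; both generators are needed to normalize the two surviving parameters to $t_2=1$ and $t_3=6$, after which the curvature coefficients $(u_4,\dots,\tilde u_9)=(42,-30,20,-4,6,0,0,0)$ are forced.)
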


 \begin{proof} Let $X_1 \in \ff^{-1}$ with $\tgr_{-1}(X_1) = f_{10}$.  Applying $\exp(\ad(\langle e_{10}, e_{21}, e_{31} \rangle))$ to $X_1$ and adding $N = f_{01}$ if necessary, we may assume $X_1 \in \ff^{-1}$ as below, which we extend to $X_2, X_3, X_4, X_5 \in \ff$ using \eqref{E:235br}.  (Additional terms are added in order to ``row reduce'' the corresponding matrix of coefficients.)
 \begin{align}
 \begin{split}
 X_1 &:= f_{10} + t_1 \sfZ_2 + t_2 e_{01} + t_3 e_{10} + t_4 e_{31} + t_5 e_{32} \\
 X_2 &:= [N,X_1]_\ff - t_1 N = f_{11} + t_2(\sfZ_1 - 2\sfZ_2) + t_5 e_{31} \\
 X_3 &:= -\tfrac{1}{2} ([X_1,X_2]_\ff + t_1 X_2 - 2 t_2 X_1 + 3 t_3 N) \\
 &= f_{21} - \tfrac{1}{2} t_1 t_2 (\sfZ_1 - 4 \sfZ_2) + \tfrac{3}{2} t_2 t_3 e_{10} + t_5 e_{21} + (\tfrac{3}{2} t_2 t_4 - t_1 t_5) e_{31} \\
 X_4 &:= \tfrac{1}{3} ([X_1,X_3]_\ff + t_1 X_3 + 2 t_3 X_2 + \tfrac{t_1 t_2}{2}  X_1)\\
 &= f_{31} - \tfrac{t_2}{6} (t_1^2 + 2 t_3) \sfZ_1 + \tfrac{t_2}{6} ( 5 t_1^2 + t_3 ) \sfZ_2 - \tfrac{t_1 t_2^2}{2}  e_{01} + (\tfrac{5}{6} t_1 t_2 t_3 - \tfrac{t_4}{3}) e_{10} + (t_5 - \tfrac{t_2^2 t_3}{2} ) e_{11} \\
 &\qquad  + (t_1 t_5 - \tfrac{t_2 t_4}{2} ) e_{21} + (t_1 t_2 t_4 - \tfrac{2}{3} t_1^2 t_5 - \tfrac{t_3 t_5}{3}) e_{31} + t_2(\tfrac{t_2 t_4}{2} - t_1 t_5) e_{32}\\
 X_5 &:= -\tfrac{1}{3} ([X_2,X_3]_\ff - \tfrac{3 t_1 t_2}{2} X_2 - \tfrac{9t_2 t_3}{2} N) \\
 &= f_{32} + \tfrac{t_1 t_2^2}{2} (\sfZ_1 - 2\sfZ_2) + (t_5 - \tfrac{t_2^2 t_3}{2} ) e_{10} + t_2(t_1 t_5 - \tfrac{t_2 t_4}{2} ) e_{31}
 \end{split}
 \end{align}
 Define $[\cdot,\cdot]_\ff$ via $\kappa$ satisfying $N \cdot \kappa = 0$, i.e. $\kappa = u_4 \kappa_4 + u_5 \kappa_5 + ... + \tilde{u}_8 \tilde\kappa_8 + \tilde{u}_9 \tilde\kappa_9$, cf.\ \eqref{E:N6kappa}.  Closure of $[\cdot,\cdot]_\ff$ forces complicated relations, so we instead evaluate Jacobi identities (see {\tt Maple} file in {\tt arXiv} submission):
 \begin{align} \label{E:N6Jac}
 \begin{split}
 0 = \Jac^\ff(X_1,X_2,X_4) &\equiv (5t_2 u_4 + 7 u_5) f_{01} \quad\mod \fp_+ \qRa \framebox{$u_5 = -\tfrac{5}{7} t_2 u_4$};\\
 0 = \Jac^\ff(X_1,X_3,X_4) &\equiv -2t_1 (t_2 u_4 - u_5) f_{01} = -\tfrac{24}{7} t_1 t_2 u_4 f_{01} \quad\mod \fh \op \fp_+.
 \end{split}
 \end{align}
 Since $u_4 \neq 0$ (coefficient of the harmonic part $\kappa_4$), then \framebox{$t_1 t_2 = 0$} follows.  Using this, we evaluate:
 \begin{align}
 \begin{split}
 \ff \ni \,\,&[X_1,X_4]_\ff + t_1 X_4 - t_3 X_3 + \tfrac{t_2}{6} (2t_3 + t_1^2) X_1 + u_4 N  \\
 &\equiv t_4 (\tfrac{5}{3} \sfZ_1 - 2 \sfZ_2) + 4( t_5 - \tfrac{t_2^2 t_3}{3} ) e_{01} - (u_5 + \tfrac{5}{6} t_2 t_3^2 + \tfrac{t_1 t_4}{3}) e_{10} \quad \mod \langle e_{11}, e_{21}, e_{31} , e_{32} \rangle.
 \end{split}
 \end{align}
 By closure, we have \framebox{$t_4 = 0$, $t_5 = \tfrac{t_2^2 t_3}{3}$, $u_5 = -\tfrac{5}{6} t_2 t_3^2$}.  From $t_1 t_2 = 0$, we have two cases:
  \begin{enumerate}
 \item \framebox{$t_2 = 0$} ($t_4=t_5=0$): 
 $(X_1,X_2,X_3,X_4,X_5) = (f_{10} + t_1 \sfZ_2 + t_3 e_{10}, f_{11}, f_{21}, f_{31}, f_{32})$. We calculate:
 \begin{align}
 \begin{split}
 [X_1,X_4]_\ff &\equiv -u_5 e_{10} + u_6 e_{21} - \tilde{u}_7 e_{31} + 2 u_7 e_{32} \quad \mod \ff,\\
 [X_3,X_4]_\ff &\equiv 2 u_6 e_{10} - 6 u_7 e_{21} - \tilde{u}_8 e_{31} - u_8 e_{32} \quad \mod \ff,\\
 [X_4,X_5]_\ff &\equiv -3 u_7 e_{10} + u_8 e_{21} - \tilde{u}_9 e_{31} \quad \mod \ff.
 \end{split}
 \end{align}
 These lie in $\ff$, so $u_5 = ... = \tilde{u}_9 = 0$, i.e.\ $\kappa = u_4 \kappa_4$.  We may normalize to $u_4 = 1$, but then $\ff \inj \ff \op \langle \sfZ_2 \rangle$, so $\ff$ is non-maximal since it injects into an $\sfN.7$ model.
 
 \item \framebox{$t_1 = 0,\, t_2 \neq 0$}: Applying $\exp(\ad_{\langle \sfZ_2 \rangle})$ to $X_1$, we may normalize $t_2 = 1$.  From $u_5 = -\tfrac{5}{6} t_2 t_3^2 = -\tfrac{5}{7} t_2 u_4$ earlier, we have \framebox{$0 \neq u_4 = \tfrac{7}{6} t_3^2$}, so applying $\exp(\ad_{\langle \sfZ_1 - \sfZ_2 \rangle})$ to $X_1$ (which preserves the normalization $t_2 = 1$), we may normalize to $t_3 = 6$ (so $u_4 = 42$).
We obtain:
 \begin{align}
 \begin{split}
  X_1 &= f_{10} + e_{01} + 6 e_{10} + 2 e_{32}, \quad
  X_2 = f_{11} + \sfZ_1 - 2\sfZ_2 + 2 e_{31}, \\
  X_3 &= f_{21} + 9 e_{10} + 2 e_{21}, \quad
  X_4 = f_{31} - 2 \sfZ_1 + \sfZ_2 - e_{11} - 4 e_{31}, \quad
  X_5 = f_{32} - e_{10}
  \end{split}
  \end{align}
 We find that
 \begin{align}
 \begin{split}
 [X_1,X_4]_\ff &\equiv -(u_5 + 30) e_{10} + (u_6 - 20) e_{21} - \tilde{u}_7 e_{31} + 2(u_7 + 4) e_{32} \quad \mod \ff,\\
 [X_3,X_4]_\ff &\equiv 2(u_6 - 20) e_{10} - 6(u_7 + 4) e_{21} - \tilde{u}_8 e_{31} - (u_8 - 6) e_{32} \quad \mod \ff,\\
 [X_4,X_5]_\ff &\equiv -3(u_7 + 4) e_{10} + (u_8 - 6) e_{21} - \tilde{u}_9 e_{31} \quad \mod \ff.
 \end{split}
 \end{align}
 These all lie in $\ff$, so $(u_4,u_5,u_6,u_7,u_8,\tilde{u}_7,\tilde{u}_8,\tilde{u}_9) = (42,-30, 20, -4, 6,0,0,0)$ follows.
 \end{enumerate}
 This yields the unique $\sfN.6$ structure in Table \ref{F:alg-models}, for which we can verify $\Jac^\ff = 0$, cf.\ Table \ref{F:Fstr}.
 \end{proof}
 
 The brackets $[\cdot,\cdot]_\ff$ are given in Table \ref{F:Fstr} $(\sfN.6)$, and equipping $\ff \subset \fg$ with the inherited filtration completes the Lie-theoretic description of the structure.  Let us now match this to the Doubrov--Govorov model \eqref{E:N6-Monge}.
 
 \begin{example} \label{X:N6}
 Consider a space $M$ with coordinates $(x,y,p,q,z)$ and distribution $\cD$ spanned by
 \begin{align} \label{E:N6-Monge}
 \partial_x + p\partial_y + q\partial_p + (q^{1/3} + y) \partial_z, \quad \partial_q.
 \end{align}
 The symmetry algebra $\ff = \sym(\cD)$ is spanned by the vector fields
 \begin{align}
 \begin{array}{l}
 \bH = -x\partial_x + y\partial_y + 2p\partial_p + 3q\partial_q,\quad
 \bX = -y\partial_x + p^2 \partial_p + 3pq\partial_q - \frac{y^2}{2} \partial_z,\\
 \bY = -x\partial_y - \partial_p - \frac{x^2}{2}\partial_z,\quad
 \bS = \partial_x, \quad
 \bT = \partial_y + x\partial_z, \quad
 \bU = \partial_z.
 \end{array}
 \end{align}
 We have $\ff \cong \fsl(2,\bbC) \ltimes \mathfrak{heis}_3$ with brackets as in Table \ref{F:LieTh} $(\sfN.6)$.
 Fix the basepoint $o \in M$ given by $(x,y,p,q,z) = (0,0,0,1,0)$.  This point is {\em generic}, i.e.\ the evaluation map $\ev_o : \ff \to T_o M$ is surjective, and we can induce the filtration $\ff = \ff^{-3} \supset \ff^{-2} \supset \ff^{-1} \supset \ff^0 \supset 0$ as the pre-image of the one on $T_o M$, i.e.\ $\ff^0 = \ker(\ev_o)$ and $\ff^i = (\ev_o)^{-i}(\cD^i|_o)$ for $i=1,2,3$.  In Table \ref{F:LieTh} $(\sfN.6)$, a basis $v_0,...,v_5$ is specified with 
 \begin{align}
 \ff^{-3} / \ff^{-2} = \langle v_4, v_5 \rangle, \quad
 \ff^{-2} / \ff^{-1} = \langle v_3 \rangle, \quad 
 \ff^{-1} / \ff^0 = \langle v_1,v_2 \rangle, \quad
 \ff^0 = \langle v_0 \rangle,
 \end{align}
 and a new basis $N,X_1,...,X_5$ is specified satisfying the brackets in Table \ref{F:Fstr} $(\sfN.6)$.
 (To find this, we used undetermined parameters and then matched the Lie algebra structures -- see \S\ref{S:gencase} for an example of this.)  Consequently, Table \ref{F:alg-models} $(\sfN.6)$ is the Cartan-theoretic description of \eqref{E:N6-Monge}.
 \end{example}
 
 \subsection{Type $\sfD$ structures}
 \label{S:typeD}
 Let $\ff \in \cM$ be a $\sfD.6$ structure.  We may assume (Table \ref{F:RootType}) that $\kappa_H \leftrightarrow \sfx^2 \sfy^2$ with weight $4\alpha_1 + 2\alpha_2$, so for $h_{01} = -\sfZ_1 + 2\sfZ_2$, we have:
 \begin{align}
 \fs = \tgr(\ff) = \fa^{\kappa_H} = \fg_- \op \fa_0, \qbox{where} \fa_0 = \langle h_{01} \rangle.
 \end{align}
 
 \begin{theorem} \label{T:typeD}
 Any $\sfD.6$ algebraic model is $P$-equivalent to $(\ff;\fg,\fp)$ given in Table \ref{F:alg-models} $(\sfD.6_a)$, for some parameter $a \in \bbC$.  These have classifying invariant $\cI = a^2 \in \bbC$.
 \end{theorem}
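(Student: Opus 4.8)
The plan is to follow the template of the $\mathsf{III}.6$ and $\sfN.7$ proofs: first normalize $T \in \ff^0$ (with $\tgr_0(T) = h_{01}$) as far as the structure group allows, then pin down the deformation map $\fd$ and the curvature $\kappa$ by weight and closure constraints, and finally read off the classifying invariant from $\Aut(\ff;\fg,\fp)$. The key preliminary computation is the action of $\ad_{h_{01}}$ on $\fp_+$: in the basis $(e_{10},e_{11},e_{21},e_{31},e_{32})$ one finds $\ad_{h_{01}}|_{\fp_+} = \diag(-1,1,0,-1,1)$. Using $P_+ = \exp(\ad\,\fp_+)$ (and redefining $\ff$) we may annihilate the components of $T$ along the nonzero eigenspaces, leaving $T = h_{01} + \lambda e_{21}$ for some $\lambda \in \bbC$. \emph{Here lies the essential difference with the $\mathsf{III}.6$ case}: zero \emph{is} an eigenvalue of $\ad_T|_{\fp_+}$ (equivalently, $2\alpha_1 + \alpha_2 \in \Delta(\fp_+)$ is a rational multiple of the harmonic weight $4\alpha_1 + 2\alpha_2$), so a residual nilpotent term $\lambda e_{21}$ survives. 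A short second-order computation confirms it cannot be removed by any further element of $P_+$, and since $\ad_{h_{01}+\lambda e_{21}}$ is non-semisimple for $\lambda\neq 0$, this term cannot be conjugated away within $\Stab_P(\kappa_H)$ either.

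The main obstacle is therefore to show that $\lambda$ may be taken to be $0$, i.e.\ that $T = h_{01}$. First, the central scaling $\exp(\ad(t\sfZ_1))$ sends $\lambda \mapsto e^{2t}\lambda$ while fixing $h_{01}$, so we reduce to $\lambda \in \{0,1\}$. I would rule out $\lambda = 1$ as follows. Since $[h_{01}, e_{21}] = 0$ with $\ad_{h_{01}}$ semisimple and $\ad_{e_{21}}$ nilpotent, the action of $T$ on $\bbE$ has these as its commuting semisimple and nilpotent parts, so the condition $T \cdot \kappa = 0$ from Proposition \ref{P:algCC}(b) is equivalent to the two conditions $h_{01}\cdot\kappa = 0$ \emph{and} $e_{21}\cdot\kappa = 0$. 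Building $X_1 \in \ff^{-1}$ with leading part $f_{10}$ and generating $X_2,\dots,X_5$ via \eqref{E:235br}, I would impose closure of $[\cdot,\cdot]_\ff$ and vanishing of $\Jac^\ff$; I expect the extra constraint $e_{21}\cdot\kappa = 0$ together with closure to be incompatible with $\kappa_H \neq 0$, forcing $\lambda = 0$. This is the analogue of the contradiction extracted in the $\mathsf{III}.6$ proof, but is more delicate precisely because of the surviving zero-eigenvalue direction, and I expect it to be the genuinely hard step.

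Once $T = h_{01}$, it acts reductively, so Proposition \ref{P:algCC}(d) applies with the $\ad_T$-invariant complement $\fs^\perp = \langle \sfZ_1, e_{01}, f_{01}\rangle \op \fp_+$: the deformation map $\fd$ is then $h_{01}$-weight-preserving. Computing $h_{01}$-weights via $\alpha_1(h_{01}) = -1$ and $\alpha_2(h_{01}) = 2$ constrains $\fd$ to
\[
\fd(f_{10}),\,\fd(f_{31}) \in \langle e_{11},e_{32}\rangle, \quad
\fd(f_{11}),\,\fd(f_{32}) \in \langle e_{10},e_{31}\rangle, \quad
\fd(f_{21}) \in \langle \sfZ_1, e_{21}\rangle,
\]
while $h_{01}\cdot\kappa = 0$ forces $\kappa$ into the $h_{01}$-weight-$0$ subspace of $\bbE$, which from Table \ref{F:CurvMod} is spanned (in $\sfD$-adapted notation) by the weight-$0$ vectors of the $A$, $C$, $E$ and $\CE$ components, i.e.\ $\kappa = u_4\kappa_4 + u_6\kappa_6 + u_8\kappa_8 + \tilde u_8 \tilde\kappa_8$.

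It then remains to impose closure of $\ff$ under $[\cdot,\cdot]_\ff = [\cdot,\cdot] - \kappa(\cdot,\cdot)$ together with $\Jac^\ff = 0$. This is a finite polynomial system in the $\fd$-parameters and the coefficients $(u_4,u_6,u_8,\tilde u_8)$; I expect it to eliminate $\tilde u_8$ and the $\sfZ_1$-term of $\fd(f_{21})$, and to express every remaining coefficient in terms of a single parameter $a$, reproducing exactly the basis and the curvature $\kappa = -4\kappa_4 + \tfrac{4a}{3}\kappa_6 - 2a^2\kappa_8$ of Table \ref{F:alg-models}$(\sfD.6_a)$, with residual scalings fixing the overall normalization. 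Finally, identifying $\Aut(\ff;\fg,\fp)$ as done for $\sfN.7$ should exhibit a discrete automorphism (coming from the swap $\sfx\leftrightarrow\sfy$ stabilizing $\sfx^2\sfy^2$) acting by $a \mapsto -a$, so that the classifying invariant is $\cI = a^2 \in \bbC$.
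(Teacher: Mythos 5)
Your overall template is the right one and, from the point where $T = h_{01}$ onwards, your outline (apply Proposition \ref{P:algCC}(d) with $\fs^\perp = \langle \sfZ_1, e_{01}, f_{01}\rangle \op \fp_+$, constrain $\fd$ and $\kappa$ to the $h_{01}$-weight-zero parts, then impose closure) matches the paper's proof step for step; your identification of the admissible curvature span $\langle \kappa_4,\kappa_6,\kappa_8,\tilde\kappa_8\rangle$ is also correct. The problem is that you have located the ``genuinely hard step'' in the wrong place and then left it unproven. Ruling out the residual term $\lambda e_{21}$ in $T$ is stated only as an expectation (``I expect the extra constraint $e_{21}\cdot\kappa=0$ together with closure to be incompatible with $\kappa_H\neq 0$''), with no computation backing it; as written this is a genuine gap, and the proposed mechanism (a contradiction extracted from the curvature module, the Jordan decomposition of $\rho(T)$, and Jacobi identities, contingent on non-flatness) is not how the obstruction actually arises.

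The paper kills $t$ (your $\lambda$) with a one-line closure argument that needs neither the curvature module nor $\kappa_H\neq 0$. Take $X_1 \in \ff^{-1}$ with $\tgr_{-1}(X_1)=f_{10}$ and, after adding a multiple of $T$ to absorb the $\sfZ_1$-component, write $X_1 \equiv f_{10}+c_1 f_{01}+c_2\sfZ_2+c_3 e_{01} \mod \fp_+$. Since $\kappa(T,\cdot)=0$, one has $[T,X_1]_\ff=[T,X_1]$, and a direct computation gives $[T,X_1]_\ff - X_1 \equiv -3c_1 f_{01}-c_2\sfZ_2+c_3 e_{01}-2t\, e_{11}$ modulo $\langle e_{10},e_{21},e_{31},e_{32}\rangle$. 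This element lies in $\ff$ and has no $f_{10},f_{11}$ leading part, hence lies in $\ff^0=\langle T\rangle$; but it has no $\sfZ_1$-component while $T$ does, so it vanishes, forcing $t=c_1=c_2=c_3=0$ in one stroke. So the surviving zero-eigenvalue direction is harmless, and your worry that this is ``more delicate'' than the $\mathsf{III}.6$ case is misplaced: the same kind of elementary closure computation as in \eqref{E:III-1}--\eqref{E:III-3} suffices. Two smaller points: the $\sfZ_1$-term of $\fd(f_{21})$ is removed by the residual $\exp(\ad\langle e_{21}\rangle)$-freedom (a normalization), not by the closure/Jacobi system; and the redundancy $a\mapsto -a$ is induced by the diagonal automorphism $A_{-1}$ of Table \ref{F:Aut-algmodel}, not by the swap $\widetilde{A}$ (which is an automorphism of each $\sfD.6_a$ and fixes $a$) --- though your conclusion $\cI=a^2$ is correct.
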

 
 \begin{proof} 
 Let $T \in \ff^0$ with $\tgr_0(T) = h_{01} = -\sfZ_1 + 2\sfZ_2$.  Since $\alpha(h_{01}) \neq 0$, $\forall \alpha \in \Delta(\fp_+) \backslash \{ 2\alpha_1 + \alpha_2 \}$, then using the $P_+$-action, we normalize $T = -\sfZ_1 + 2\sfZ_2 + t e_{21}$, which reduces $P_+$ to $\exp(\ad\,\langle e_{21} \rangle)$.  Consider $X_1 \in \ff^{-1}$ with $\tgr_{-1}(X_1) = f_{10}$.   Adding $T$ if necessary, we may assume that $X_1 = f_{10} + c_1 f_{01} + c_2 \sfZ_2 + c_3 e_{01} \, \mod \fp_+$.
 We have $\kappa(T,\cdot) = 0$, so $[T,X_1]_\ff = [T,X_1]$.  The following must lie in $\ff$ (in fact, $\ff^0 = \langle T \rangle$):
 \begin{align} \label{E:TX1}
 \ff \ni [T,X_1]_\ff - X_1 &\equiv -3 c_1 f_{01} - c_2 \sfZ_2 + c_3 e_{01} - 2t e_{11} \quad \mod \langle e_{10}, e_{21}, e_{31}, e_{32} \rangle,
 \end{align}
 hence $t = c_1 = c_2 = c_3 = 0$.  Thus, \framebox{$T = h_{01} = -\sfZ_1 + 2\sfZ_2$} and $X_1 = f_{10}\, \mod \fp_+$.
 
  Letting $\fs^\perp = \langle \sfZ_1, e_{01}, f_{01} \rangle \op \fp_+$, the decomposition $\fg = \fs \op \fs^\perp$ is $\ad_T$-invariant.  By Proposition \ref{P:algCC}(d), we have $T \cdot \fd = 0$, so $\fd \in (\fs^* \otimes \fs^\perp)_{[2\alpha_1+\alpha_2]}$.  This implies that
 \begin{align}
 \begin{split}
 X_1 = f_{10} + a \,e_{11} + b \,e_{32}, \quad
 X_2 = f_{11} + a' e_{10} + b' e_{31}, \quad
 X_3 = f_{21} + c\, e_{21},
 \end{split}
 \end{align}
 corresponding to terms $f_{10}^* \otimes e_{11}$ and $f_{10}^* \otimes e_{32}$ of weights $2\alpha_1 + \alpha_2$ and $4\alpha_1 + 2\alpha_2$ respectively, etc.  Using $\exp(\ad\,\langle e_{21} \rangle)$, $X_3$ has been normalized by removing $h_{21} = \sfZ_1$ terms from it.

 By Proposition \ref{P:kappa}(b) and \eqref{E:235br}: for $1 \leq i < j \leq 3$, $\kappa(X_i,X_j) = 0$, so $[X_i,X_j]_\ff = [X_i,X_j]$.  Then:
 \begin{itemize}
 \item $\ff \ni [X_1,X_2]_\ff - 3 a T + 2X_3 = (a-a') (2\sfZ_1 - 3\sfZ_2) - (2 a a' + b+b' - 2 c) e_{21}$.
 Hence,
 \begin{align}
 a' = a, \quad c = a a' + \tfrac{1}{2} (b+b'), \quad [X_1,X_2]_\ff = 3 a T - 2X_3.
 \end{align} 

  \item Define $X_4,X_5 \in \ff^{-3}$ via 
 $[X_1,X_3]_\ff = 2 a X_1 + 3 X_4$ and
 $[X_2,X_3]_\ff = - 2 a X_2 - 3 X_5$.  Explicitly, 
 \begin{align}
 \begin{split}
  X_4 = f_{31} + \tfrac{2b+b'}{3} e_{11} + a\left( a^2 - \tfrac{b}{6} + \tfrac{b'}{2} \right) e_{32}, \\
  X_5 = f_{32} + \tfrac{b+2 b'}{3}e_{10}+a\left(a^2+\tfrac{b}{2} - \tfrac{b'}{6} \right) e_{31}.
 \end{split}
 \end{align}
 \end{itemize}
  
 By Proposition \ref{P:algCC}, $T \cdot \kappa = 0$, so $\kappa \in \bbE_{[2\alpha_1+\alpha_2]}$, which is spanned (see Table \ref{F:CurvMod}) by:
 \begin{align}
 \begin{split}
 \kappa_4 &= (f_{10}^* \wedge f_{32}^* - f_{11}^* \wedge f_{31}^*) \otimes h_{01} - f_{10}^* \wedge f_{31}^* \otimes e_{01} - f_{11}^* \wedge f_{32}^* \otimes f_{01}, \\
 \kappa_6 &= (f_{11}^* \wedge f_{31}^* - f_{10}^* \wedge f_{32}^* ) \otimes e_{21} - 2 f_{21}^* \wedge f_{32}^* \otimes e_{10} + 2 f_{21}^* \wedge f_{31}^* \otimes e_{11} + f_{31}^* \wedge f_{32}^* \otimes h_{01}, \\
 \kappa_8 &=  f_{21}^* \wedge f_{31}^* \otimes e_{32} - f_{21}^* \wedge f_{32}^* \otimes e_{31} - f_{31}^* \wedge f_{32}^* \otimes e_{21},\\
 \widetilde\kappa_8 &= f_{21}^* \wedge f_{32}^* \otimes e_{31} + f_{21}^* \wedge f_{31}^* \otimes e_{32},
 \end{split}
 \end{align}
 having weights $r(2\alpha_1 + \alpha_2)$ for $r=2,3,4,4$.  Thus, $\kappa = s \kappa_4 + t \kappa_6 + u \kappa_8 + \tilde{u} \widetilde\kappa_8$. Next, evaluate closure conditions.  Since $\kappa(X_1,X_4) = -s e_{01}$ and 
 $\kappa(X_2,X_5) = -s f_{01}$, we have
 \begin{align}
 [X_1,X_4]_\ff &= [X_1,X_4] - \kappa(X_1,X_4) = (3b+b' + s) e_{01},\\
 [X_2,X_5]_\ff &= [X_2,X_5] - \kappa(X_2,X_5) = (b+3b' + s) f_{01},
 \end{align}
 which lie in $\ff$, so $b' = b = -\tfrac{s}{4}$.  Also, $\kappa(X_2,X_4) = -\kappa(X_1,X_5) = -s T + t e_{21}$, so 
 \begin{align}
 [X_2,X_4]_\ff &= -[X_1,X_5]_\ff = - 6 b T + a X_3 + (\tfrac{4ab}{3} - t) e_{21}
 \end{align}
 lies in $\ff$, hence $t = \tfrac{4ab}{3}$.  Similarly:
 \begin{align}
 \ff \ni [X_3,X_4]_\ff &= -(a^2+3b) X_1 - (u + \tilde{u} + 2a^2b ) e_{32} \qRa u = -\tilde{u} - 2a^2b,\\
 \ff \ni [X_3,X_5]_\ff &= (a^2+3b) X_2 - 2 \tilde{u} e_{31} \qRa \tilde{u} = 0, \\
   [X_4,X_5]_\ff &= a(a^2-b) T - 2bX_3.
 \end{align}
 We can now verify that the Jacobi identity for $[\cdot,\cdot]_\ff$ is satisfied.
 Observe that $\kappa=0$ if and only if $b=0$.  The $\sfD.6$ structure in Table \ref{F:alg-models} is obtained by normalizing $b \neq 0$ to $b=1$ using $\exp(\ad_{\langle \sfZ \rangle})$.

In Table \ref{F:Aut-algmodel}, $\Aut(\ff;\fg,\fp)$ is described (for $b=1$).  Consider $A_\zeta \in \Aut(\fg)$, extended from $A_\zeta \begin{psm} f_{10} \\ f_{11} \end{psm} = \begin{psm} \zeta f_{10} \\ f_{11} \end{psm}$, which induces $(\kappa_H,a,b) \mapsto (\frac{\kappa_H}{\zeta^2}, \frac{a}{\zeta}, \frac{b}{\zeta^2})$.  Setting $\zeta^4=1$ preserves $\kappa_H$, and $a^2$ is the classifying invariant.
 \end{proof}
 
 From the proof above, let us record the limiting $b=0$ case:
 
 \begin{cor} \label{C:b=0} For any $a \in \bbC$, the algebraic model $(\ff;\fg,\fp)$ given below has $\kappa=0$.  (It is non-maximal, i.e. $(\ff;\fg,\fp) < (\fg;\fg,\fp)$, and has type $\sfO$.)
 \begin{align}
 \begin{array}{c@{\quad}c} 
 \ff: \,\begin{array}{l}
 T = h_{01} = -\sfZ_1 + 2\sfZ_2,\\
 X_1 = f_{10} + a e_{11},\\
 X_2 = f_{11} + a e_{10}, \\
 X_3 = f_{21} + a^2 e_{21},\\
 X_4 = f_{31} + a^3 e_{32},\\
 X_5 = f_{32} + a^3 e_{31}.
 \end{array} &
 \begin{array}{c|cccccc}
 [\cdot,\cdot]_\ff & T & X_1 & X_2 & X_3 & X_4 & X_5\\ \hline
 T & \cdot & X_1 & -X_2 & \cdot & X_4 & -X_5\\
 X_1 &&  \cdot & 3aT - 2X_3 & \,\,\,\,2aX_1 + 3X_4 & \cdot & -aX_3\\
 X_2 && & \cdot & - 2aX_2 - 3X_5 & aX_3  &\cdot\\
 X_3 &&&&\cdot & -a^2 X_1 & a^2 X_2\\
 X_4 &&&&&\cdot & a^3 T\\
 X_5 &&&&&&\cdot
 \end{array}
 \end{array} 
 \end{align}
 \end{cor}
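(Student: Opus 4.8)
The plan is to obtain Corollary \ref{C:b=0} as the degenerate $b=0$ limit of the $\sfD.6$ analysis already carried out in the proof of Theorem \ref{T:typeD}. There, after normalizing $T = h_{01} = -\sfZ_1 + 2\sfZ_2 \in \ff^0$ and $X_1 \equiv f_{10} \,\,\mod \fp_+$, the whole family was parametrized by writing $\kappa = s\kappa_4 + t\kappa_6 + u\kappa_8 + \tilde{u}\widetilde\kappa_8$ and imposing closure of $[\cdot,\cdot]_\ff = [\cdot,\cdot] - \kappa(\cdot,\cdot)$, which forced $b' = b = -\tfrac{s}{4}$, $c = a^2 + \tfrac12(b+b')$, $t = \tfrac{4ab}{3}$, $\tilde{u} = 0$, and $u = -2a^2 b$. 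First I would simply set $b = 0$ in these relations: this immediately yields $b' = s = t = u = \tilde{u} = 0$ and $c = a^2$, whence $\kappa = 0$. Substituting these values back into the expressions for $X_1, \dots, X_5$ recovers exactly the displayed filtered basis (in particular $X_3 = f_{21} + a^2 e_{21}$, $X_4 = f_{31} + a^3 e_{32}$, and $X_5 = f_{32} + a^3 e_{31}$).

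Since $\kappa = 0$, the bracket $[\cdot,\cdot]_\ff = [\cdot,\cdot]$ is nothing but the restriction of the $\fg$-bracket, so the second step is to confirm that $\ff = \langle T, X_1, \dots, X_5 \rangle$ is genuinely closed under $[\cdot,\cdot]$. This is a finite direct computation producing the displayed bracket table, which is precisely the $b = 0$ specialization of Table \ref{F:Fstr} $(\sfD.6_a)$: the terms there proportional to $b$ (such as the $6bT$ contribution to $[X_1,X_5]_\ff$ and the $-2bX_3$ in $[X_4,X_5]_\ff$) vanish, leaving $[X_1,X_5]_\ff = -aX_3$, $[X_4,X_5]_\ff = a^3 T$, and so on. Closure under $[\cdot,\cdot]$ makes $\ff$ an honest Lie subalgebra of $\fg$, so the Jacobi identity for $[\cdot,\cdot]_\ff$ holds automatically, and conditions (M1)--(M3) of Definition \ref{D:alg-model} are trivially satisfied: $\ff/\ff^0 \cong \fg/\fp$ via the leading parts $f_{10}, \dots, f_{32}$ of $X_1, \dots, X_5$, and regularity/normality is vacuous because $\kappa = 0$.

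Finally, $\kappa = 0$ gives $\kappa_H = 0$, which is the trivial root type $\sfO$. Because $\ff$ embeds as a $6$-dimensional Lie subalgebra of $\fg$ under its standard bracket while $\dim \fg = 14$, we obtain a strict inclusion $(\ff;\fg,\fp) < (\fg;\fg,\fp)$ into the flat model, as claimed. I expect no genuine obstacle here: the only point needing care is checking that the $b \to 0$ specialization does not degenerate the structure, i.e.\ that $\ff$ remains $6$-dimensional and closed, and this follows at once from substituting $b = 0$ into the already-derived bracket formulas rather than re-running the normalization argument. Alternatively, one could verify the corollary independently of Theorem \ref{T:typeD} by direct substitution of the stated $X_i$ into the $\Lie(G_2)$ bracket relations of Table \ref{F:G2br} and reading off closure.
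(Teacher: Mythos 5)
Your proposal is correct and follows essentially the same route as the paper: Corollary \ref{C:b=0} is recorded there precisely as the $b=0$ specialization of the computation in the proof of Theorem \ref{T:typeD}, where it was observed that $\kappa=0$ if and only if $b=0$, and substituting $b=b'=s=t=u=\tilde{u}=0$, $c=a^2$ into the derived expressions yields the displayed basis and bracket table. Your additional remarks (closure under the restricted $\fg$-bracket, automatic Jacobi, strict inclusion into the flat model) are consistent with the paper's intent and require no further justification.
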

 
 \begin{cor} \label{C:D6ss} For $\sfD.6_a$, $\ff$ is semisimple if and only if $a^2 \not\in \{ -\frac{9}{4}, 4 \}$, in which case $\ff \cong \fsl(2,\bbC) \times \fsl(2,\bbC)$.
 \end{cor}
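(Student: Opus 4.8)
The plan is to apply Cartan's criterion: over $\bbC$ a finite-dimensional Lie algebra is semisimple exactly when its Killing form $B(x,y) = \tr(\ad_x \circ \ad_y)$ is non-degenerate. So I would compute $B$ directly from the bracket recorded in Table \ref{F:Fstr} $(\sfD.6_a)$ and determine the values of $a$ at which it degenerates. The decisive simplification is that $\ad_T$ is diagonalizable: reading off the first row of the bracket table, $\ad_T$ has eigenvalue $+1$ on $\langle X_1, X_4 \rangle$, eigenvalue $-1$ on $\langle X_2, X_5 \rangle$, and eigenvalue $0$ on $\ff_0 := \langle T, X_3 \rangle$. Since $B$ is $\ad$-invariant, $B(\ff_\lambda, \ff_\mu) = 0$ unless $\lambda + \mu = 0$; thus, in the ordered basis $(X_1, X_4 \mid T, X_3 \mid X_2, X_5)$, the Gram matrix of $B$ is the direct sum of the $2 \times 2$ block $B_0$ on $\ff_0$ and the off-diagonal pairing block $M = \bigl(B(Y,Z)\bigr)$ between $\langle X_1, X_4\rangle$ and $\langle X_2, X_5\rangle$ (the diagonal blocks on the $\pm 1$ eigenspaces vanish). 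Consequently $\det B = \pm \det(B_0)\,(\det M)^2$, and $B$ is non-degenerate iff both $\det B_0 \neq 0$ and $\det M \neq 0$.

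The only remaining work is the evaluation of these two $2\times 2$ determinants, which amounts to computing a handful of traces $\tr(\ad_Y \circ \ad_Z)$. I expect to find that $\det B_0$ is a nonzero multiple of $4a^2 + 9$ and that $\det M$ is a nonzero multiple of $(a^2-4)(4a^2+9)$, the latter after factoring the resulting quartic $4a^4 - 7a^2 - 36 = (a^2-4)(4a^2+9)$. Since the factor $4a^2+9$ in $\det B_0$ is already subsumed in $\det M$, non-degeneracy of $B$ reduces to $\det M \neq 0$, i.e.\ to $a^2 \notin \{ -\tfrac94, 4\}$. This establishes the semisimplicity dichotomy.

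For the isomorphism type in the semisimple range, I would argue purely by dimension: every complex simple Lie algebra has dimension at least $3$ (with $\dim \fsl(2,\bbC) = 3$), so the only $6$-dimensional complex semisimple Lie algebra is $\fsl(2,\bbC) \times \fsl(2,\bbC)$. Hence $\ff \cong \fsl(2,\bbC) \times \fsl(2,\bbC)$ whenever $a^2 \notin \{-\tfrac94, 4\}$, with no need to exhibit explicit commuting $\fsl_2$-triples.

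The main obstacle is not conceptual but the error-prone bookkeeping in assembling the $\ad$-matrices: the bracket table is antisymmetric and several entries (notably the cross terms $[X_1,X_5]$, $[X_2,X_4]$, and $[X_4,X_5]$) feed each trace with delicate sign conventions, and a single sign slip perturbs the quartic in $a$ and yields spurious degeneracy values. I would guard against this using the discrete $X_1 \leftrightarrow X_2$, $X_4 \leftrightarrow X_5$ symmetry implicit in the construction, which forces $B(X_1,X_5) = B(X_2,X_4)$ and so supplies an internal consistency check, together with a final verification of the claimed factorization of $\det M$.
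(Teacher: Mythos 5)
Your proposal is correct and takes essentially the same route as the paper: the paper simply records that the Killing form of $\ff$ has determinant $4096(4a^2+9)^3(a^2-4)^2$ and then identifies the $6$-dimensional semisimple algebra by dimension count, exactly as you do. Your $\ad_T$-eigenspace block decomposition is a sensible way to organize that determinant computation, and your predicted factorization $\det B_0\,(\det M)^2 \propto (4a^2+9)^3(a^2-4)^2$ agrees with the paper's value.
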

 
 \begin{proof}
 The Killing form for $\ff$ has determinant $4096(4a^2+9)^3(a^2-4)^2$.  When nonzero, $\ff$ is 6-dimensional semisimple, so it must be isomorphic to $\fsl(2,\bbC) \times \fsl(2,\bbC)$.
 \end{proof}

 \section{The rolling distribution}
 \label{S:rolling}
 Consider two 2-spheres in $\bbR^3$ with radii $R \geq r > 0$, rolling on each other without twisting or slipping -- see Figure \ref{F:spheres}.  Let $\rho := \frac{R}{r} \geq 1$.  We picture the larger sphere as fixed and the smaller sphere rolling on the first one.  Any point of the configuration space $M$ is determined by:
 \begin{enumerate}
 \item the unique point of contact of the spheres, i.e.\ a point on $S^2$ (the larger sphere), and
 \item how the smaller sphere is rotated relative to a fixed reference frame attached to the larger sphere.  This corresponds to a point in $\SO(3)$.
 \end{enumerate}
 So we can identify $M$ with the 5-manifold $S^2 \times \SO(3)$.  The space of velocities is constrained by the no twist, no slip conditions.  On the tangent bundle $TM$, these constraints distinguish a rank two distribution $\cD_\rho$ referred to as the {\sl rolling distribution}.  When $\rho = 1$, the $\cD_\rho$ is {\em holonomic}, i.e.\ Frobenius-integrable, while when $\rho > 1$, it is {\em non-holonomic} and has growth $(2,3,5)$.  We refer to Bor--Montgomery \cite{BM2009} for an excellent account of the rolling distribution $(M,\cD_\rho)$.  In this section, we will describe it Cartan-theoretically.
  
 \begin{figure}[h]
 \begin{tikzpicture}
  \shade[ball color = blue!40, opacity = 0.7] (0,0) circle (2cm);
  \draw (0,0) circle (2cm);
  \draw (-2,0) arc (180:360:2 and 0.6);
  \draw[dashed] (2,0) arc (0:180:2 and 0.6);
  \fill[fill=black] (0,0) circle (1pt);
  \draw[dashed] (0,0 ) -- node[above]{$R$} (2,0);
  \shade[ball color = green!40, opacity = 0.4] (3,0) circle (1cm);
  \draw (3,0) circle (1cm);
  \draw (2,0) arc (180:360:1 and 0.4);
  \draw[dashed] (4,0) arc (0:180:1 and 0.4);
  \fill[fill=black] (3,0) circle (1pt);
  \draw[dashed] (3,0 ) -- node[above]{$r$} (4,0);
  \fill[fill=black] (2,0) circle (1pt);
  \draw [purple,thick,domain=25:45,->] plot ({3*cos(\x)}, {3*sin(\x)});
 \end{tikzpicture}
 \caption{Two 2-spheres rolling on each other without twisting or slipping}
 \label{F:spheres}
 \end{figure}
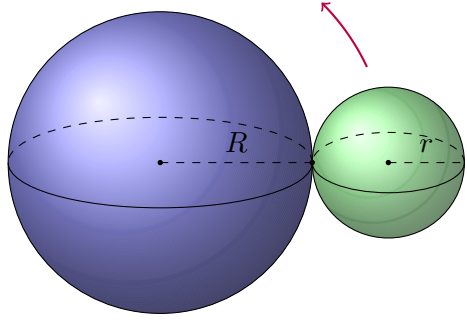

 \subsection{The exceptional ratio}
 The following is a fascinating symmetry result:

 \begin{theorem} \label{T:rolling}
 Let $\rho > 1$.  The symmetry algebra of the rolling distribution $(M,\cD_\rho)$ is $\fso(3) \times \fso(3)$ when $\rho \neq 3$, and $\Lie(\GS)$ when $\rho = 3$.  (Here, $\GS$ refers to the split real form of $G_2$.)
 \end{theorem}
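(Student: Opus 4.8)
The plan is to present $(M,\cD_\rho)$ as a homogeneous $(2,3,5)$-structure, build its Cartan-theoretic model explicitly as a function of $\rho$, and read off the root type, after which Theorem \ref{T:typeD}, Corollary \ref{C:D6ss}, and Corollary \ref{C:b=0} finish the argument. First I would fix the transitive symmetry: rotating both spheres together, and independently spinning the rolling sphere, yields an action of $\SO(3)\times\SO(3)$ on $M=S^2\times\SO(3)$ preserving $\cD_\rho$ for every $\rho>1$ \cite{BM2009}, and this action is transitive with one-dimensional isotropy. Hence $(M,\cD_\rho)$ is multiply-transitive, $\ff\supseteq\fso(3)\times\fso(3)$ has dimension six, and fixing a generic basepoint induces the $(2,3,5)$-filtration \eqref{E:LieTh}. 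Starting from an explicit rolling coframe on $S^2\times\SO(3)$, I would record the structure constants of this filtered Lie algebra as functions of $\rho$, obtaining a Lie-theoretic model in the format of Table \ref{F:Fstr}.

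Next I would pass to the Cartan-theoretic side and compute the curvature. Realizing $\ff$ as a filtered subalgebra of $\fg=\Lie(G_2)$ — using $\fs=\tgr(\ff)\subset\fa^{\kappa_H}$ from Proposition \ref{P:algCC}(c) to constrain the leading parts — and forming $\kappa=[\cdot,\cdot]-[\cdot,\cdot]_\ff$, I would match the outcome against the family arising in the proof of Theorem \ref{T:typeD}. This expresses $\kappa$ through parameters $a=a(\rho)$ and $b=b(\rho)$, with the harmonic part (Cartan quartic) proportional to $b$; the decisive computation is this $\rho$-dependence, and I expect $b(\rho)=0$ to hold precisely at $\rho=3$.

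The dichotomy then follows. For $\rho=3$ we get $b=0$, hence $\kappa=0$ and the model is flat; this is exactly the $b=0$ degeneration of Corollary \ref{C:b=0}, a non-maximal description of the flat $(G_2,P_1)$-geometry, so the symmetry prolongs to $\Lie(\GS)$. For $\rho>1$ with $\rho\neq 3$ we have $b\neq 0$; normalizing $b=1$ places the model in Table \ref{F:alg-models} $(\sfD.6_a)$, so it is type $\sfD$, and by prolongation-rigidity (Corollary \ref{C:PR}) the symmetry dimension is exactly $\dim\fa^{\kappa_H}=6$, with no prolongation beyond the evident $\fso(3)\times\fso(3)$. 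I would check $a^2(\rho)\notin\{-\tfrac{9}{4},4\}$ on this range, so that Corollary \ref{C:D6ss} gives $\ff_{\bbC}\cong\fsl(2,\bbC)\times\fsl(2,\bbC)$, and then fix the real form via the anti-involution (Definition \ref{D:AI}) defining $\Lie(\GS)$: since the symmetries are rotations, $\ff$ is the compact real form, i.e.\ the maximal compact $\fso(4)\cong\fso(3)\times\fso(3)$ of $\Lie(\GS)$.

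The main obstacle is the explicit determination of $b(\rho)$ and $a(\rho)$: honestly embedding the $\rho$-dependent rolling symmetry algebra into $\Lie(G_2)$ as a filtered subalgebra, computing its curvature, and isolating the vanishing locus $b=0$. Once this is in hand, exceptionality of the $3:1$ ratio is immediate — it is precisely where the Cartan quartic degenerates from type $\sfD$ to type $\sfO$ — and the remaining assertions (sharpness of the symmetry dimension, semisimplicity, and identification of the real form) follow formally from the classification established above.
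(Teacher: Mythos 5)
Your proposal follows essentially the same route as the paper: complexify the Bor--Montgomery Lie-theoretic description of the rolling algebra, fit it as a filtered subalgebra of $\Lie(G_2)$ against the $\sfD.6$ family from the proof of Theorem \ref{T:typeD} (the paper does this by solving for the coefficients of an $\ad_T$-eigenbasis ansatz, yielding $a^2=\cI(\rho)$ with the harmonic part nonvanishing exactly when $\rho\neq 3$), and then conclude by maximality of $\sfD.6_a$ for $\rho\neq 3$ and by the flat $b=0$ degeneration of Corollary \ref{C:b=0} for $\rho=3$. The only deferred item is the explicit determination of the embedding coefficients and of $a^2(\rho)$, which you correctly identify as the decisive computation and which the paper carries out in \S\ref{S:gencase}.
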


 This was first observed by Bryant -- see \cite[pg.2]{BM2009} for historical remarks and a brief sketch of Bryant's (unpublished) argument deducing it from Cartan's study in \cite[\S 53]{Car1910}.  Via abnormal extremals, Zelenko \cite[Example 4]{Zel2006} calculated that the Cartan quartic vanishes when $\rho = 3$ (see also Agrachev's survey \cite{Agr2007}).  Bor--Montgomery \cite{BM2009} found an embedding $\fso(3) \times \fso(3) \hookrightarrow \Lie(\GS)$ when $\rho = 3$.  (However, in avoiding Cartan--Tanaka machinery, they could not assert that the symmetry is no larger than $\Lie(\GS)$.) Baez--Huerta \cite{BH2014} explained exceptionality by adopting a global picture of $\GS$ as the symmetries of a ``spinorial ball rolling on a projective plane'' when the ratio of radii is $3:1$.
 
 We show below how our $\sfD.6$ results from \S\ref{S:MT} yield an efficient Cartan-theoretic proof of Theorem \ref{T:rolling}.
 \subsection{Lie-theoretic description}
 Our starting point is the Lie-theoretic description of $(M,\cD_\rho)$ given in \cite[Sec.3.2]{BM2009}.  Consider $\fso(3)$ with standard brackets
 \begin{align}
 [\bi,\bj] = \bk, \quad  [\bj,\bk] = \bi, \quad  [\bk,\bi] = \bj.
 \end{align}
Define $(\ffr,\ffr^0) = (\fso(3) \times \fso(3),\langle (\bk,\bk) \rangle)$.  Fix $\rho \in \bbR^\times$ and define the $\ffr^0$-invariant subspace \cite[Prop.2]{BM2009}:
 \begin{align}
 \ffr^{-1} = \langle (\bi,-\rho \bi), (\bj,-\rho \bj), (\bk,\bk) \rangle.
 \end{align}
 The following involutions of $\ffr$ preserve $\ffr^0$ and induce transformations of $\rho$:
 \begin{itemize}
 \item Swapping $\fso(3)$ factors induces $\rho \mapsto \frac{1}{\rho}$.  Geometrically, we can swap the spheres.
 \item $(\bi,\bj,\bk) \mapsto (\bi,-\bj,-\bk)$ and $(\bi,\bj,\bk) \mapsto (- \bi, \bj,-\bk)$
 are two involutions of $\fso(3)$.  Using one on each $\fso(3)$ factor induces $\rho \mapsto -\rho$.  This is geometrically induced by a simultaneous orientation-flip on each sphere.  (The tangent spaces for the spheres are modelled on $\ffr / (\ffr^0 + \fso(3))$ and $\ffr / (\fso(3) + \ffr^0)$.)
 \end{itemize}
 Hence, $\rho \geq 1$ suffices.  We also exclude $\rho=1$, for which $\ffr^{-1}$ is a Lie subalgebra.  Thus, \framebox{$\rho > 1$} will suffice.
 
The weak derived flag of $\ffr^{-1}$ is defined via $\ffr^{i-1} = \ffr^i + [\ffr^{-1},\ffr^i]$ for $i \leq -1$, and this yields an $\ffr^0$-invariant filtration $\ffr = \ffr^{-3} \supset \ffr^{-2} \supset \ffr^{-1} \supset \ffr^0$.  Here is an adapted basis:
 \begin{align}
 v_0 = (\bk,\bk), \quad 
 v_1 = (\bi,-\rho \bi), \quad 
 v_2 = (\bj,-\rho \bj), \quad  
 v_3 = (\bk,\rho^2 \bk), \quad
 v_4 = (\bj,-\rho^3 \bj), \quad 
 v_5 = (\bi,-\rho^3 \bi).
 \end{align}
 In particular, $\ffr^{-2} / \ffr^{-1} \equiv \langle v_3 \rangle$ and $\ffr^{-3} / \ffr^{-2} \equiv \langle v_4, v_5 \rangle$.  This completes the Lie-theoretic description of the real homogeneous $(2,3,5)$-structure $(M,\cD_\rho)$.
 
 \subsection{A Cartan-theoretic proof of exceptionality}
 
 A {\tt Maple} worksheet supporting this section accompanies our {\tt arXiv} submission.
 \subsubsection{Generic case}
 \label{S:gencase}
 Consider $\ff = \ffr \otimes_\bbR \bbC$, with corresponding $\ff^0$-invariant filtration $\ff = \ff^{-3} \supset \ff^{-2} \supset \ff^{-1} \supset \ff^0$.  Our goal will be to realize $\ff$ as an algebraic model $(\ff;\fg,\fp)$ of exactly the $\sfD.6_a$ form given in Table \ref{F:alg-models}, where $a \in \bbC$ is to be determined.  Since $\ad_T|_\ff$ from Table \ref{F:alg-models} has (multiplicity two) eigenvalues $0,\pm 1$, while $\ad_{v_0} |_{\ff}$ has (multiplicity two) eigenvalues $0, \pm i$, let us define $T := i v_0$.  Now consider the most general basis of $\ff$ that: (i) extends $T$, (ii) is adapted to the filtration on $\ff$, and (iii) is an eigenbasis for $\ad_T$.  This yields:
 \begin{align} \label{E:gen-filtration}
 \begin{split}
 T := i v_0, \quad
 \begin{array}{l}
 X_1 := s_1(v_1 + i v_2), \\
 X_2 := s_2(v_1 - i v_2),
 \end{array} \quad
 X_3 := s_3 v_3 + t_1 i v_0, \quad
 \begin{array}{l}
 X_4 := s_4(v_4 - i v_5) + t_2 (v_1 + i v_2), \\
 X_5 := s_5(v_4 + i v_5) + t_3 (v_1 - i v_2),
 \end{array}
 \end{split}
 \end{align}
 where $s_i \in \bbC^\times$ and $t_i \in \bbC$ are parameters to be determined.  We now evaluate the $\sfD.6_a$ bracket relations $[\cdot,\cdot]_\ff$ in Table \ref{F:Fstr} using the adapted basis above:
 \begin{align}
 0 &= [X_1,X_2]_\ff - 3aT + 2X_3 = 2(s_3 - i s_1 s_2) X_3 + (2 t_1 - 3 a) T, \label{E:D6fit1}\\
 0 &= [X_1,X_3]_\ff - 2aX_1 - 3 X_4 = -(3 t_2 + (2a+t_1) s_1) X_1 - (3s_4 + s_1 s_3) X_4, \label{E:D6fit2}\\
 0 &= [X_1,X_5]_\ff - 6 T + a X_3 = (2 s_1 s_5 (\rho^2 + 1) - 2 i s_1 t_3  + a s_3) X_3 + (2 i s_1 s_5 \rho^2 + a t_1 - 6) T, \label{E:D6fit3}\\
 0 &= [X_3,X_4]_\ff + (a^2+3) X_1 
 = (s_3 s_4 \rho^2 + a^2 s_1 + t_1 t_2 + 3 s_1) X_1 - (is_3 s_4 (\rho^2+1) - s_3 t_2 - t_1 s_4) X_4. \label{E:D6fit4}
 \end{align}
 Solving these yields the following:
 \begin{align} \label{E:embed-rel}
 \begin{split}
 s_2 &= -\tfrac{5a}{s_1(\rho^2+1)}, \quad
 s_3 = -\tfrac{5 i a }{\rho^2+1}, \quad
 s_4 = \tfrac{5i s_1 a}{3(\rho^2+1)}, \quad
 s_5 = \tfrac{25 i a^2}{3 s_1(\rho^2+1)^2}, \\
 t_1 &= \tfrac{3}{2} a, \quad 
 t_2 = -\tfrac{7}{6} a s_1, \quad
 t_3 = \tfrac{35 a^2}{6 s_1 (\rho^2+1)}, \quad
 \framebox{$a^2 = \tfrac{4(\rho^2+1)^2}{(\rho+3)(\rho-3)(\rho+\tfrac{1}{3})(\rho-\tfrac{1}{3})} =: \cI(\rho)$}.
 \end{split}
 \end{align}
 All remaining $\sfD.6_a$ $[\cdot,\cdot]_\ff$ bracket relations from Table \ref{F:Fstr} are satisfied.  
 The classifying invariant $\cI = a^2$ satisfies $\cI(\rho) = \cI(\rho^{-1}) = \cI(-\rho)$, and is strictly decreasing on $(1,3) \cup (3,\infty)$ with range $(-\infty, -\frac{9}{4}) \cup (4,\infty)$.

This realizes $(\ff;\fg,\fp)$ as a $\sfD.6$ algebraic model, which is {\em maximal} in $(\cM,\leq)$, so $(M,\cD_\rho)$ has precisely 6-dimensional symmetry, i.e.\ $\fso(3) \times \fso(3)$ is the full symmetry algebra when $\rho \in (1,3) \cup (3,\infty)$.
 
 \subsubsection{Exceptionality of the $3:1$ ratio}
 Let \framebox{$\rho=3$}.  We adopt the above strategy, but use structure equations from Corollary \ref{C:b=0}.  While \eqref{E:D6fit1} and \eqref{E:D6fit2} are the same, \eqref{E:D6fit3} and \eqref{E:D6fit4} become:
 \begin{align}
  0 &= [X_1,X_5]_\ff + a X_3 = (2 s_1 s_5 (\rho^2 + 1) - 2 i s_1 t_3  + a s_3) X_3 + (2 i s_1 s_5 \rho^2 + a t_1) T, \label{E:D6fit3'}\\
 0 &= [X_3,X_4]_\ff + a^2 X_1 = (s_3 s_4 \rho^2 + a^2 s_1 + t_1 t_2) X_1 - (is_3 s_4 (\rho^2+1) - s_3 t_2 - t_1 s_4) X_4. \label{E:D6fit4'}
 \end{align} 
 The solution to \eqref{E:D6fit1}, \eqref{E:D6fit2}, \eqref{E:D6fit3'}, \eqref{E:D6fit4'} is given by the same $s_i, t_j$ (but ignore $a^2$) from \eqref{E:embed-rel}, with $\rho=3$.  (All other brackets are consistent.)  Since $\kappa=0$, we have realized a Lie algebra embedding $\ff \inj \fg$.  This yields the analogous result for the real form $\ffr$, which completes the proof of Theorem \ref{T:rolling}.

 \section{Real forms}
 \label{S:real}
 We now complement \S\ref{S:rolling} with a broader, systematic study of real forms.  Real $(2,3,5)$-distributions were recently studied by Willse \cite{Willse2019} (by classifying anti-involutions on complex Lie-theoretic data arising from \cite{Car1910}) and Zhitomirskii \cite{Zhi2021} (via normal form methods).  Our goal is to give a Cartan-theoretic presentation of the real classification.
 
  We can study algebraic models in the real setting, but with the complex classification in-hand, we can equivalently classify anti-involutions (Definition \ref{D:AI}) on each (complex) $(\ff;\fg,\fp)$ up to $\Aut(\ff;\fg,\fp)$.

 \subsection{Classification summary}
 Start by defining certain automorphisms $A_\lambda, \widetilde{A} \in \Aut(\fg)$ as in Table \ref{F:Autg}.
 
 \begin{footnotesize}
 \begin{table}[h]
 \[
 \begin{array}{|c||@{\,}c@{\,}|@{\,}c@{\,}|@{\,}c@{\,}|@{\,}c@{\,}|@{\,}c@{\,}|@{\,}c@{\,}|@{\,}c@{\,}|@{\,}c@{\,}|@{\,}c@{\,}|@{\,}c@{\,}|@{\,}c@{\,}|@{\,}c@{\,}|@{\,}c@{\,}|@{\,}c|} \hline
 A & A(f_{32}) & A(f_{31}) & A(f_{21}) & A(f_{11}) & A(f_{10}) & A(f_{01}) & A(\sfZ_1) & A(\sfZ_2) & A(e_{01}) & A(e_{10}) & A(e_{11}) & A(e_{21}) & A(e_{31}) & A(e_{32}) \\ \hline\hline
 A_\lambda & \lambda f_{32} & \lambda^2 f_{31} & \lambda f_{21} & f_{11} & \lambda f_{10} & \frac{1}{\lambda} f_{01} & \sfZ_1 & \sfZ_2 & \lambda e_{01} & \frac{1}{\lambda} e_{10} & e_{11} & \frac{1}{\lambda} e_{21} & \frac{1}{\lambda^2} e_{31} & \frac{1}{\lambda} e_{32}\\ \hline
 \widetilde{A} & f_{31} & f_{32} & -f_{21} & f_{10} & f_{11} & e_{01} & \sfZ_1 & \sfZ_1 - \sfZ_2 & f_{01} & e_{11} & e_{10} & -e_{21} & e_{32} & e_{31} \\ \hline
 \end{array}
 \]
 \caption{Some automorphisms of $\fg = \Lie(G_2)$}
 \label{F:Autg}
 \end{table}
 \end{footnotesize}
 
 We use these to describe $\Aut(\ff;\fg,\fp)$ in Table \ref{F:Aut-algmodel}.  (Let $\fder(\ff;\fg,\fp)$ denote its Lie algebra and recall that all derivations of $\fg$ are inner.)
 
 \begin{footnotesize}
 \begin{table}[h]
 \[
 \begin{array}{|c|c|c|c|}\hline
 \mbox{Label} & \fder(\ff;\fg,\fp) & \begin{array}{c} \mbox{Generators for} \\ \Aut(\ff;\fg,\fp) / \exp(\fder(\ff;\fg,\fp)) \end{array} & \mbox{Remarks}\\ \hline\hline
 \sfN.7_c & \ad_{\sfZ_2}, \, \ad_{f_{01}} & \begin{cases} A_{-1}, & c \neq 0;\\ A_{-1},\, A_{i}, & c = 0 \end{cases} & 
 c \stackrel{A_{i}}{\mapsto} -c \\ \hline
 \sfN.6 & \ad_{f_{01}} & A_{-1} & \cdot \\ \hline
 \sfD.6_a & \ad_{-\sfZ_1+2\sfZ_2} &
 \begin{cases}
 \widetilde{A}, & a \neq 0;\\
 \widetilde{A}, \, A_{-1}, & a = 0
 \end{cases} & a \stackrel{A_{-1}}{\mapsto} -a \\ \hline
 \end{array}
 \]
 \caption{Classification of $\Aut(\ff;\fg,\fp)$}
 \label{F:Aut-algmodel}
 \end{table}
 \end{footnotesize}

 We will use $\Aut(\ff;\fg,\fp)$ to classify anti-involutions of $(\ff;\fg,\fp)$ up to conjugacy (Definition \ref{D:AI}).  To state the result, we first define some anti-involutions of $\fg$ in Table \ref{F:AI-relevant}.
  
 \begin{footnotesize}
 \begin{table}[h]
 \[
 \begin{array}{|c||@{\,}c@{\,}|@{\,}c@{\,}|@{\,}c@{\,}|@{\,}c@{\,}|@{\,}c@{\,}|@{\,}c@{\,}|@{\,}c@{\,}|@{\,}c@{\,}|@{\,}c@{\,}|@{\,}c@{\,}|@{\,}c@{\,}|@{\,}c@{\,}|@{\,}c@{\,}|@{\,}c|} \hline
 \psi & \psi(f_{32}) & \psi(f_{31}) & \psi(f_{21}) & \psi(f_{11}) & \psi(f_{10}) & \psi(f_{01}) & \psi(\sfZ_1) & \psi(\sfZ_2) & \psi(e_{01}) & \psi(e_{10}) & \psi(e_{11}) & \psi(e_{21}) & \psi(e_{31}) & \psi(e_{32}) \\ \hline\hline
 \psi_\zeta & \zeta^3 f_{32} & \zeta^3 f_{31} & \zeta^2 f_{21} & \zeta f_{11} & \zeta f_{10} & f_{01} & \sfZ_1 & \sfZ_2 & e_{01} & \frac{1}{\zeta} e_{10} & \frac{1}{\zeta} e_{11} & \frac{1}{\zeta^2} e_{21} & \frac{1}{\zeta^3} e_{31} & \frac{1}{\zeta^3} e_{32}\\ \hline
 \widetilde\psi_\zeta & \zeta^3 f_{31} & \zeta^3 f_{32} & -\zeta^2 f_{21} & \zeta f_{10} & \zeta f_{11} & e_{01} & \sfZ_1 & \sfZ_1 - \sfZ_2 & f_{01} & \frac{1}{\zeta} e_{11} & \frac{1}{\zeta} e_{10} & -\frac{1}{\zeta^2} e_{21} & \frac{1}{\zeta^3} e_{32} & \frac{1}{\zeta^3} e_{31} \\ \hline
 \tau_\zeta & \zeta f_{32} & f_{31} & \zeta f_{21} & f_{11} & \zeta f_{10} & \zeta f_{01} & \sfZ_1 & \sfZ_2 & \zeta e_{01} & \zeta e_{10} & e_{11} & \zeta e_{21} & e_{31} & \zeta e_{32}\\ \hline
 \end{array}
 \]
 \caption{Relevant anti-involutions of $\fg = \Lie(G_2)$ ($\zeta^4=1$; $\bbC$-antilinearly extended).}
 \label{F:AI-relevant}
 \end{table}
 \end{footnotesize}
 
 \begin{theorem} The Cartan-theoretic classification of non-flat real multiply-transitive $(2,3,5)$-structures is given in
 Table \ref{F:AI}.
 \end{theorem}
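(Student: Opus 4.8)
The plan is to exploit the reduction set up at the start of this section: a non-flat real multiply-transitive $(2,3,5)$-structure is the same data as one of the complex models $(\ff;\fg,\fp)$ of Table \ref{F:alg-models} together with an anti-involution $\psi$ in the sense of Definition \ref{D:AI}, and two such pairs produce isomorphic real structures exactly when $\psi_1,\psi_2$ are conjugate under $\Aut(\ff;\fg,\fp)$. Since the complex classification (the $\sfN.7_c$, $\sfN.6$, $\sfD.6_a$ branches) is already in hand, it suffices to classify, branch by branch, the anti-involutions of $(\ff;\fg,\fp)$ up to conjugacy by the groups recorded in Table \ref{F:Aut-algmodel}. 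For each resulting $\psi$, the fixed-point algebra $\fg^\psi$ names the real form of $\fg$ — necessarily the split form $\Lie(\GS)$, since $\fp \cap \fg^\psi$ is a parabolic subalgebra of $\fg^\psi$ and the compact real form of $G_2$ has no proper parabolics — while $\ff^\psi$ is the real symmetry algebra and the restriction of the classifying invariant determines its reality condition and admissible range.

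First I would parametrize the anti-involutions preserving $\fp$. Fixing the antilinear conjugation $\tau := \tau_1$ of Table \ref{F:AI-relevant} (which fixes the displayed basis and whose fixed points form the split real form), every anti-involution has the form $\psi = A \circ \tau$ with $A \in \Aut(\fg)$, and the conditions $\psi^2 = \id$, $\psi(\fp)=\fp$ become $A \circ \tau A \tau = \id$ together with $A(\fp)=\fp$. Requiring in addition that $\psi$ preserve $\ff$ and its inherited filtration forces $\tgr_0(\psi)$ to preserve $\langle \tgr_0(T)\rangle$ (resp.\ $\langle f_{01}\rangle$ in the $\sfN.6$ case) and the leading parts $\tgr_{-1}(X_i)$; this pins down the $G_0$-component of $A$ and reduces $\psi$ to a finite-dimensional family. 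Matching against the explicit families $\psi_\zeta,\widetilde\psi_\zeta,\tau_\zeta$ (with $\zeta^4=1$) of Table \ref{F:AI-relevant}, I would verify that these exhaust, up to the residual $P_+$-freedom, the anti-involutions compatible with $\fp$.

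Next, for each branch I would impose that $\psi$ restricts to an anti-involution of $(\ff,[\cdot,\cdot]_\ff)$, i.e.\ $\psi(\ff)=\ff$ and $\psi([x,y]_\ff)=[\psi x,\psi y]_\ff$. Concretely this requires the structure constants of Table \ref{F:Fstr} to be real in a $\psi$-adapted basis, which both selects the admissible $\zeta$ and imposes a reality constraint on the parameter: tracking, say, $\psi(X_1)=\psi(f_{10})+\bar c\,\psi(e_{10})$ against closure of $\ff$ relates $\bar c$ to $c$ (and $\zeta$), so each admissible $\zeta$ forces $c^2$ (resp.\ $a^2$) into $\bbR$ of a definite sign or into $i\bbR$. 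I would then quotient the resulting list by conjugation under $\Aut(\ff;\fg,\fp)$, using the recorded actions of the discrete generators from Table \ref{F:Aut-algmodel} (e.g.\ $A_i$ with $c\mapsto -c$, $A_{-1}$ with $a\mapsto -a$, and the swap-type $\widetilde{A}$) to collapse equivalent anti-involutions and to cut the parameter down to a fundamental domain. Reading off $\ff^\psi$ then yields the entries of Table \ref{F:AI}: for instance, in the $\sfD.6$ branch the distinct real forms $\fsl(2,\bbR)\times\fsl(2,\bbR)$, $\fso(3)\times\fso(3)$, $\fsl(2,\bbR)\times\fso(3)$, and the realification $\fsl(2,\bbC)_\bbR$ (the last from the factor-swapping $\widetilde\psi_\zeta$) of the generic $\fsl(2,\bbC)\times\fsl(2,\bbC)$, in agreement with the compact $\fso(3)\times\fso(3)$ found for the rolling distribution in \S\ref{S:rolling}.

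The main obstacle I anticipate lies in the $\sfN.6$ branch and is twofold. Because $\ff^0$ does not act reductively there, Proposition \ref{P:algCC}(d) is unavailable, so the normalization of $\psi$ cannot be organized purely by $\ad_T$-weights and must instead be extracted by direct closure and Jacobi bookkeeping against the explicit brackets. Second, establishing \emph{completeness} — that no anti-involution outside the listed conjugacy classes occurs — requires ruling out spurious solutions of $A\circ\tau A\tau=\id$ that preserve $\fp$ but fail to preserve $\ff$; this is precisely where the interplay between the $\zeta^4=1$ freedom, the residual $P_+$-action, and the discrete automorphisms of Table \ref{F:Aut-algmodel} must be tracked exhaustively rather than guessed case by case.
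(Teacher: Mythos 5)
Your proposal follows essentially the same route as the paper: with the complex models in hand, classify anti-involutions of each $(\ff;\fg,\fp)$ up to conjugacy by $\Aut(\ff;\fg,\fp)$ (Table \ref{F:Aut-algmodel}), derive the reality condition $c^2\in\bbR$ resp.\ $a^2\in\bbR$ on the classifying parameter, and identify the real forms $\ff^\psi$. The only cosmetic difference is that you organize the normalization by writing $\psi=A\circ\tau$ against a fixed split conjugation, whereas the paper pins $\psi$ down directly from the $\ad_T$-eigenspace decomposition (showing $\psi$ fixes the grading element and is determined by $\psi|_{\fg_{-1}}$); your flagged difficulties, notably the non-reductive $\ff^0$ in the $\sfN.6$ branch, are exactly where the paper leaves the details as an exercise.
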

 
 \begin{footnotesize}
 \begin{table}[h]
 \[
 \begin{array}{|c|c|c|l|c|} \hline
 \mbox{Label} & \begin{tabular}{c} Existence of\\ anti-involutions \end{tabular} & \begin{tabular}{c} Parameter\\ range \end{tabular} & \mbox{Inequivalent anti-involutions $\psi$} & \mbox{Redundancy}\\ \hline\hline
 \sfN.7_c & c^2 \in \bbR & \begin{array}{c} c \in (0,\infty) \\ c \in i(0,\infty) \\ c = 0 \end{array} & 
 \begin{array}{l}
 \psi_1, \quad \psi_{-1}\\
 \psi_i, \quad \psi_{-i}\\
 \psi_1, \quad \psi_i
 \end{array} & \begin{array}{c} (c,\psi_\zeta) \stackrel{A_i}{\mapsto} (-c,\psi_{-\zeta})\end{array}\\ \hline
 \sfN.6 & \checkmark & \cdot & \begin{array}{l} \tau_1,\quad \tau_{-1} \end{array} & \cdot \\ \hline
 \sfD.6_a & a^2 \in \bbR & \begin{array}{c} a\in (0,\infty) \\ a\in i(0,\infty) \\ a=0 \end{array} & 
 \begin{array}{l}
 \psi_1, \quad \widetilde\psi_{1}, \quad \widetilde\psi_{-1}\\ 
 \psi_i, \quad \widetilde\psi_{i}, \quad \widetilde\psi_{-i}\\ 
 \psi_1, \quad \psi_i, \quad \widetilde\psi_{1}, \quad \widetilde\psi_{i}
 \end{array} &
 \begin{array}{l}
 (a,\psi_\zeta,\widetilde\psi_\zeta) 
 \stackrel{A_{-1}}{\mapsto} (-a,\psi_\zeta,\widetilde\psi_{-\zeta})
 \end{array}\\ \hline
 \end{array}
 \]
 \caption{Classification of non-flat real multiply-transitive $(2,3,5)$-distributions}
 \label{F:AI}
 \end{table}
 \end{footnotesize}
 
 In the next sections, we only give details for the type $\sfD$ cases, leaving calculations for the type $\sfN$ cases as an exercise.  For each anti-involution $\psi$ of $(\ff;\fg,\fp)$, the underlying Lie-theoretic structure $(\ff^\psi,[\cdot,\cdot]_{\ff^\psi})$ for the (real) fixed point Lie algebra $\ff^\psi$ can be directly computed (as in Example \ref{X:real}).  The most interesting cases concern real forms for $\sfD.6$ structures, whose isomorphisms types are  organized in Table \ref{F:D6AI}.

 \begin{footnotesize}
 \begin{table}[h]
  \[
 \begin{array}{|c|c|c|c|c|} \hline
 && \multicolumn{3}{c|}{\mbox{Isomorphism type of $\ff^\psi$ when $a \in \bbR_{\geq 0}$}}\\ \hline
 \psi & \mbox{$\bbR$-basis of $\ff^\psi$} & a \in [0,2) & a=2 & a > 2\\ \hline\hline
 \psi_1 & T, X_1, X_2, X_3, X_4, X_5 & 
 \fsl(2,\bbR) \times \fsl(2,\bbR) & \fsl(2,\bbR) \times \fe(1,1) & \fsl(2,\bbR) \times \fsl(2,\bbR)\\ \hline
 \widetilde\psi_{1} & \begin{array}{l} iT,\, X_1 + X_2, \, i(X_1 - X_2), \\ i X_3,\, X_4 + X_5,\, i(X_4 - X_5) \end{array} & 
 \fsl(2,\bbR) \times \fso(3) & 
 \fsl(2,\bbR) \times \fe(2) & 
 \fsl(2,\bbR) \times \fsl(2,\bbR)\\ \hline
 \widetilde\psi_{-1} & 
 \begin{array}{l}
 iT,\, X_1 - X_2,\, i(X_1+X_2), \\
 iX_3,\, X_4 - X_5,\, i(X_4 + X_5)
 \end{array} & 
 \underset{\mbox{\tiny (omit when $a = 0$)}}{\fsl(2,\bbR) \times \fso(3)} & 
 \fso(3) \times \fe(2) &
 \fso(3) \times \fso(3) \\ \hline\hline
 && \multicolumn{3}{c|}{\mbox{Isomorphism type of $\ff^\psi$ when $a \in i\bbR_{\geq 0}$}}\\ \hline
 \psi & \mbox{$\bbR$-basis of fixed point set $\ff^\psi$} & 
 a \in i[0,\frac{3}{2}) & a = \frac{3}{2} i & a \in i (\frac{3}{2},\infty) \\ \hline
 \psi_i & 
  \begin{array}{l} T, \,(1+i) X_1,\, (1+i) X_2,\\
  iX_3,\, (1-i) X_4,\, (1-i) X_5 \end{array}
  & 
 \fso(1,3) & 
 \fe(1,2) & 
 \fsl(2,\bbR) \times \fsl(2,\bbR) \\ \hline
  \widetilde\psi_{i} & 
  \begin{array}{l}
  iT,\, X_1 + iX_2,\, X_2 + iX_1,\\
  X_3,\, X_4 - iX_5,\, X_5 - iX_4
  \end{array} & 
  \fso(1,3) & 
  \fe(3) &
  \fso(3) \times \fso(3) \\ \hline
  \widetilde\psi_{-i} & \begin{array}{l}
  iT,\, X_1 - iX_2,\, X_2 - iX_1,\\
  X_3,\, X_4 + iX_5,\, X_5 + iX_4
  \end{array} & 
  \underset{\mbox{\tiny (omit when $a = 0$)}}{\fso(1,3)} &
  \fe(1,2) & \fsl(2,\bbR) \times \fsl(2,\bbR) \\ \hline
  \end{array}
  \]
  \caption{Alternative classification of $\sfD.6_a$ anti-involutions.}  
  \label{F:D6AI}
  \end{table}
  \end{footnotesize}
  
 \subsection{Anti-involutions for $\sfD.6$ structures} 
 \label{S:D6AI}
 \subsubsection{Abstract classification result}
 \begin{theorem} \label{T:D6AI} $\sfD.6_a$ admits an anti-involution if and only if $a^2 \in \bbR$. 
 Since $\sfD.6_a \cong \sfD.6_{-a}$, we may restrict to $a \in \bbR_{\geq 0} \cup i \bbR_{\geq 0}$.  The complete list of inequivalent anti-involutions is given in Table \ref{F:AI}.
 \end{theorem}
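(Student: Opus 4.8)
The plan is to classify anti-involutions $\psi$ of $(\ff;\fg,\fp)$ up to conjugacy by $\Aut(\ff;\fg,\fp)$, exploiting that the latter is explicitly known from Table \ref{F:Aut-algmodel}: modulo the inner flow $\exp(\ad_T)$ (with $T = h_{01}$), it is generated by $\widetilde{A}$ when $a \neq 0$, with the extra generator $A_{-1}$ when $a = 0$. Any anti-involution preserves $\fp$, hence the canonical $\fp$-filtration, the induced filtration on $\ff$, and the line $\ff^0 = \langle T \rangle = \langle h_{01} \rangle$. Passing to the associated graded $\fs := \tgr(\ff)$, the map $\psi$ induces a grading-compatible anti-involution of $\fs$, and on $\fs_0 = \langle h_{01} \rangle \subset \fg_0^{\ss} \cong \fsl_2 = \langle e_{01}, h_{01}, f_{01} \rangle$ this forces $\psi(h_{01}) = \pm h_{01}$. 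These two signs correspond precisely to the two families of Table \ref{F:AI-relevant}: $\psi_\zeta$ fixes $h_{01}$ and the pair $e_{01}, f_{01}$, while $\widetilde\psi_\zeta$ negates $h_{01}$ and swaps $e_{01} \leftrightarrow f_{01}$.

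For necessity of $a^2 \in \bbR$, the conceptual reason is that an anti-involution is $\bbC$-antilinear and bracket-preserving, so it carries the algebraic model to its complex conjugate; since the classifying invariant of $\sfD.6_a$ is $a^2$ (Theorem \ref{T:typeD}) and $\psi$ sends the model to itself, one must have $a^2 = \overline{a^2}$. Concretely, I would impose $\psi(X_1) \in \ff$ on the generator $X_1 = f_{10} + a e_{11} + e_{32}$. Reading off the weights in Table \ref{F:AI-relevant}, both $\psi_\zeta(X_1) = \zeta X_1$ and $\widetilde\psi_\zeta(X_1) = \zeta X_2$ hold exactly when $\zeta^2 a = \bar a$ (with $\zeta^4 = 1$). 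For $a \neq 0$ this gives $\zeta^2 = \bar a / a \in \{ \pm 1 \}$, forcing $a \in \bbR$ (when $\zeta = \pm 1$) or $a \in i\bbR$ (when $\zeta = \pm i$); when $a = 0$ the condition is vacuous. In every case $a^2 \in \bbR$.

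Conversely, assuming $a^2 \in \bbR$, existence is verified by direct computation: the maps $\psi_\zeta, \widetilde\psi_\zeta$ are already anti-involutions of $\fg$ preserving $\fp$, so it remains only to check, for the admissible values of $\zeta$, that each $X_i$ is sent back into $\ff$ and that $\psi^2 = \id$ on $\ff$. The self-consistency of these conditions across $X_1, \ldots, X_5$ reproduces exactly the parameter constraint above, yielding $\psi_1, \widetilde\psi_1, \widetilde\psi_{-1}$ for $a \in \bbR$ and $\psi_i, \widetilde\psi_i, \widetilde\psi_{-i}$ for $a \in i\bbR$, with all four of $\psi_1, \psi_i, \widetilde\psi_1, \widetilde\psi_i$ surviving at $a = 0$. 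For each surviving $\psi$ the $\bbR$-basis of the fixed-point real form $\ff^\psi$ is read off directly, giving the isomorphism types recorded in Table \ref{F:D6AI}.

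Finally I would reduce to inequivalent representatives using $\Aut(\ff;\fg,\fp)$. Since $\sfD.6_a \cong \sfD.6_{-a}$, the automorphism $A_{-1}$ implements $(a, \psi_\zeta, \widetilde\psi_\zeta) \mapsto (-a, \psi_\zeta, \widetilde\psi_{-\zeta})$, which lets us restrict to $a \in \bbR_{\geq 0} \cup i\bbR_{\geq 0}$ and pairs off anti-involutions accordingly; conjugation by $\exp(\ad_T)$ and by $\widetilde{A}$ accounts for the remaining identifications, and pairwise non-conjugacy of the survivors in Table \ref{F:AI} follows by distinguishing the isomorphism types of $\ff^\psi$ in Table \ref{F:D6AI}. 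I expect the main obstacle to be this last bookkeeping step: establishing \emph{completeness} (that no anti-involution falls outside the two families $\psi_\zeta, \widetilde\psi_\zeta$ after conjugation) and \emph{irredundancy} simultaneously, while correctly handling the degenerate parameter values --- namely $a = 0$, where $\Aut(\ff;\fg,\fp)$ acquires the extra generator $A_{-1}$ and one anti-involution is dropped (the ``omit when $a = 0$'' entries of Table \ref{F:D6AI}), and the special values $a^2 \in \{ 4, -\tfrac{9}{4} \}$ at which $\ff$ ceases to be semisimple (Corollary \ref{C:D6ss}) and $\ff^\psi$ changes isomorphism type.
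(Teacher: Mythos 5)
Your overall strategy is the same as the paper's: use the known $\Aut(\ff;\fg,\fp)$, split according to $\psi(T)=\pm T$, extract $\bar a = \zeta^2 a$ with $\zeta^4=1$ from $\psi(X_1)\in\ff^{-1}$, and then remove redundancy via $A_{-1}$, $\widetilde{A}$ and $\exp(t\,\ad_T)$ while separating survivors by the isomorphism type of $\ff^\psi$. The final answer is correct. However, there is a genuine gap precisely at the step you yourself flag as the main obstacle: completeness. You pass to the associated graded $\tgr(\ff)$ to argue that $\psi$ belongs to one of the two families $\psi_\zeta,\widetilde\psi_\zeta$, but an anti-involution of $(\ff;\fg,\fp)$ is a priori only \emph{filtration}-preserving on $\fg$, so the associated-graded argument pins down only the symbol of $\psi$ and leaves open filtration-raising corrections. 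Even granting $\psi(T)=\varepsilon T$ with $\varepsilon=\pm1$ (which itself requires noting that the $\ad_T$-spectrum $\{0,\pm1,\pm2\}$ is real, so that $\ad_{\psi(T)}=\varepsilon\,\ad_T$ must have the same spectrum), one only knows that $\psi$ permutes the eigenspaces $\cE_r\mapsto\cE_{\varepsilon r}$ compatibly with their filtrations. The paper must then work inside $\cE_0=\langle f_{21},T,\sfZ_1,e_{21}\rangle$: writing $\psi(e_{21})=s_2 e_{21}$ and using the $\fsl_2$-relations yields $\psi(\sfZ_1)=\sfZ_1+u\,e_{21}$, and it is only the $\psi$-invariance of $\ff^{-2}$ applied to $X_3=f_{21}+(a^2+1)e_{21}$ that forces $u=0$, hence that $\psi$ fixes the grading element and preserves every $\fg_k$. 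Only after that is $\psi$ determined by $\psi|_{\fg_{-1}}$, and only then can $\psi(f_{10}),\psi(f_{11})$ be normalized by $\exp(t\,\ad_T)$ into the two explicit families. Without this, the assertion that no anti-involution falls outside those families is unproved.

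A smaller remark: your conceptual derivation of the necessity of $a^2\in\bbR$ (an anti-involution carries the model to its complex conjugate, so the classifying invariant $a^2$ of Theorem \ref{T:typeD} must equal $\overline{a^2}$) is a legitimate and arguably cleaner route than the paper's direct computation. But it only yields the ``if and only if'' condition on $a$; it does not substitute for the completeness argument above, which is what actually produces the finite list in Table \ref{F:AI}.
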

 
 \begin{proof}
 Let $\psi$ be an anti-involution of $\sfD.6_a$.  We start by examining $\ff^0 = \langle T \rangle$, where $T = h_{01} = -\sfZ_1 + 2\sfZ_2$.  The eigenspaces of $\ad_T \in \tEnd(\fg)$ are:
 \begin{align}
 \begin{array}{|c|ccccc|} \hline
 \mbox{$\ad_T$-eigenvalue } r & -2 & -1 & 0 & 1 & 2\\ \hline
 \mbox{$\ad_T$-eigenspace } \cE_r & \langle f_{01} \rangle & \langle f_{32}, f_{11}, e_{10} ,e_{31} \rangle &  \langle f_{21}, T, h_{21}, e_{21} \rangle & \langle f_{31}, f_{10}, e_{11}, e_{32} \rangle & \langle e_{01} \rangle \\ \hline
 \end{array}
 \end{align}
  Since $\psi(\ff^0) = \ff^0$, then $\psi(T) = \varepsilon T$.  The $\ad_T$-eigenvalues are all real, so $\ad_{\psi(T)}$ must have the same set of eigenvalues, so $\varepsilon = \pm 1$ and $\psi(\cE_r) = \cE_{\varepsilon r}$ (in a filtration-preserving manner).

 Since $\psi(T) = \varepsilon T$, then $\cE_0 = \langle f_{21}, T, h_{21} := \sfZ_1, e_{21} \rangle$ is $\psi$-invariant, and so is the induced filtration.  In particular, $\psi(e_{21}) = s_2 e_{21}$, so evaluating $\psi$ on the standard $\fsl_2$-relations among $\{ h_{21}, e_{21}, f_{21} \}$, we obtain $\psi(h_{21}) = h_{21}+ u e_{21}$ and $\psi(f_{21}) = \tfrac{1}{s_2} (f_{21} - \tfrac{u}{2} h_{21} - \tfrac{u^2}{4} e_{21})$.   
Now $\psi$-invariance of $\ff^{-2} = \langle T, X_1, X_2, X_3 \rangle$ implies that $X_3 = f_{21} + (a^2+1) e_{21}$ has $\psi(X_3) = \psi(f_{21}) + (\bar{a}^2+1) \psi(e_{21}) \in \ff^{-2}$, so $\psi(X_3)$ must be a multiple of $X_3$, which forces $u=0$.  Thus, $\psi$ fixes the grading element $\sfZ_1$, and each $\fg_k$ is $\psi$-invariant.  It suffices to study $\psi|_{\fg_{-1}}$, since $\psi$ is uniquely determined from it.  Noting that $\ad_T\begin{psmallmatrix} f_{10} \\ f_{11} \end{psmallmatrix} = \begin{psmallmatrix} +f_{10} \\ -f_{11} \end{psmallmatrix}$, we have:

 \begin{itemize}
 \item \underline{$\varepsilon=1$}: $\psi$ preserves both $\langle f_{10} \rangle$ and $\langle f_{11} \rangle$.  Write $\psi\begin{psmallmatrix} f_{10} \\ f_{11} \end{psmallmatrix} = \begin{psmallmatrix} c_1 f_{10} \\ c_2 f_{11} \end{psmallmatrix}$, where $|c_1|=|c_2|=1$ since $\psi^2 = \id$.  
Conjugating $\psi$ by $\exp(t\, \ad_T)$ induces $(c_1,c_2) \mapsto (\frac{\sigma}{\bar\sigma} c_1, \frac{\bar\sigma}{\sigma} c_2)$, so $\frac{c_1}{c_2} \mapsto \frac{\sigma^2}{\bar\sigma^2} \frac{c_1}{c_2}$, where $\sigma = \exp(t)$, so we may normalize $c_1 = c_2 =: \zeta$, which is preserved for $\sigma^2 \in \bbR^\times$.
Now $X_1 = f_{10} + a e_{11} + e_{32} \in \ff^{-1}$, so $\psi(X_1) = \zeta (f_{10} + \frac{\bar{a}}{\zeta^2} e_{11} + \frac{1}{\zeta^4} e_{32}) \in \ff^{-1}$.  Thus, \framebox{$\frac{\bar{a}}{\zeta^2} = a$, \, $\zeta^4=1$}, hence \framebox{$\bar{a}^2 = a^2 \in \bbR$}.  Finally, using $\sigma = i$, we may normalize to $\zeta = 1$ or $i$.

 \item \underline{$\varepsilon=-1$}: $\psi$ swaps $\langle f_{10} \rangle$ and $\langle f_{11} \rangle$.  Write $\psi(f_{10}) = \zeta f_{11}$, and so $f_{10} = \psi^2 = \bar\zeta\psi(f_{11})$.  Conjugating by $\exp(t\,\ad_T)$ induces $\zeta \mapsto |\sigma|^2 \zeta$, where $\sigma = \exp(-t)$, so we may assume that $|\zeta|=1$, hence $\psi(f_{11}) = \zeta f_{10}$.   Now $X_1 = f_{10} + a e_{11} + e_{32} \in \ff^{-1}$, so $\psi(X_1) = \zeta (f_{11} + \frac{\bar{a}}{\zeta^2} e_{10} + \frac{1}{\zeta^4} e_{31}) \in \ff^{-1}$ must be a multiple of $X_2 = f_{11} + a e_{10} + e_{31} \in \ff^{-1}$.  Thus, \framebox{$\frac{\bar{a}}{\zeta^2} = a$,\, $\zeta^4 = 1$}, hence \framebox{$\bar{a}^2 = a^2 \in \bbR$}.  Conjugating $\psi$ by $A_{-1} \in \Aut(\fg)$ (induced from
 $A_{-1} \begin{psm}
 f_{10} \\ f_{11}
 \end{psm} = \begin{psm}
 -f_{10} \\ f_{11}
 \end{psm}$) induces $(a,\zeta) \mapsto (-a,-\zeta)$.
 \end{itemize}
 \end{proof}
 
 \subsubsection{Identification of $\sfD.6$ real forms}
 When $a^2 \in \bbR$, the anti-involutions $\psi$ from Theorem \ref{T:D6AI} act on our chosen basis $\{ T, X_1, X_2, X_3, X_4, X_5 \}$ of $\ff$ in a simple manner:
 \begin{align}
 \begin{array}{|c|c|c|c|c|c|c|} \hline
 \psi & \psi(T) & \psi(X_1) & \psi(X_2) & \psi(X_3) & \psi(X_4) & \psi(X_5) \\ \hline\hline
 \psi_\zeta & T & \zeta X_1 & \zeta X_2 & \zeta^2 X_3 & \zeta^3 X_4 & \zeta^3 X_5 \\ \hline
 \widetilde\psi_\zeta  & -T & \zeta X_2 & \zeta X_1 & -\zeta^2 X_3 & \zeta^3 X_5 & \zeta^3 X_4\\ \hline
 \end{array}
 \end{align}
 For each anti-involution $\psi$, we can examine the (real) fixed point Lie algebra $\ff^\psi := \{ x \in \ff : \psi(x) = x\}$.  From Corollary \ref{C:D6ss}, we know that $\ff \cong \fsl(2,\bbC) \times \fsl(2,\bbC)$ if $a^2 \in \bbR \backslash \{ 4, -\frac{9}{4} \}$, and $\ff^\psi$ can be identified from its Killing form signature $[p,q,r]$ (denoting the dimensions of the maximal positive, negative, null subspaces). Letting $\fe(p,q) := \fso(p,q) \ltimes \bbR^{p,q}$ be the Euclidean Lie algebra in signature $(p,q)$, we have:
 \begin{align}
 &\begin{array}{|c|c|c|c|c} \hline
 [0,6,0] & [2,4,0] & [3,3,0] & [4,2,0]\\ \hline
 \fso(3) \times \fso(3) & \fsl(2,\bbR) \times \fso(3) & \fso(1,3) & \fsl(2,\bbR) \times \fsl(2,\bbR) \\ \hline
 \end{array}\\
 &\begin{array}{|c|c|c|c|c|} \hline
 [3,1,2] & [2,2,2] & [0,4,2] & [2,1,3] & [0,3,3]\\ \hline
 \fsl(2,\bbR) \times \fe(1,1) & \fsl(2,\bbR) \times \fe(2) & \fso(3) \times \fe(2) & \fe(1,2) & \fe(3) \\ \hline
 \end{array}
 \end{align}

 \begin{prop} \label{P:altAI}
 For $\sfD.6$ structures, Table \ref{F:D6AI} gives an alternative complete classification of all anti-involutions.
 \end{prop}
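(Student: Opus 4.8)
The plan is to take the finite list of anti-involutions already produced by Theorem \ref{T:D6AI} (equivalently, the $\sfD.6_a$ row of Table \ref{F:AI}) and, for each, compute the fixed-point real form $\ff^\psi$ together with its isomorphism type, reorganizing the output by isomorphism type. The starting data is the action of $\psi_\zeta$ and $\widetilde\psi_\zeta$ on the basis $\{T, X_1, \dots, X_5\}$ recorded just above the proposition. First I would extract the real bases listed in Table \ref{F:D6AI}: since $\psi_\zeta$ acts on each $X_j$ by a fixed power of $\zeta$ (a fourth root of unity), while $\widetilde\psi_\zeta$ acts likewise after swapping $X_1 \leftrightarrow X_2$, $X_4 \leftrightarrow X_5$ and sending $T \mapsto -T$, each one-dimensional (or swapped two-dimensional) piece contributes a real fixed vector obtained by rescaling by an appropriate eighth root of unity; e.g.\ antilinearity makes $X_j \mapsto i X_j$ force the fixed line $\bbR\,(1+i)X_j$, and $T \mapsto -T$ forces $\bbR\, iT$. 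This is routine and recovers precisely the $\bbR$-bases tabulated.

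The core of the argument is to identify each $\ff^\psi$ up to isomorphism. Because $\ff^\psi$ is a real form of $\ff$, its Killing form equals the restriction of the complex Killing form $B = B_\ff$ to $\ff^\psi$, so I would compute the Gram matrix of $B$ on each real basis directly from the brackets $[\cdot,\cdot]_\ff$ in Table \ref{F:Fstr} and read off the signature $[p,q,r]$. By Corollary \ref{C:D6ss}, $\ff \cong \fsl(2,\bbC) \times \fsl(2,\bbC)$ whenever $a^2 \notin \{4,-\tfrac{9}{4}\}$; in that nondegenerate range every $\ff^\psi$ is a real form of $\fsl(2,\bbC) \times \fsl(2,\bbC)$, and such real forms are pairwise distinguished by signature, so the dictionary relating $[p,q,r]$ to $\fso(3)\times\fso(3)$, $\fsl(2,\bbR)\times\fso(3)$, $\fso(1,3)$, $\fsl(2,\bbR)\times\fsl(2,\bbR)$ pins down the type. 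The signature is piecewise constant in the parameter and jumps exactly at $a^2 = 4$ (i.e.\ $a=2$) along $\bbR_{\geq 0}$ and at $a^2=-\tfrac{9}{4}$ (i.e.\ $a=\tfrac{3}{2}i$) along $i\bbR_{\geq 0}$, which are precisely the two values where $B$ becomes degenerate; this produces the three-column split of Table \ref{F:D6AI}.

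At the two exceptional values the Killing form acquires a nontrivial radical ($r > 0$), so signature alone is no longer a complete invariant, and here I would identify the structure directly: at $a^2 = 4$ and $a^2 = -\tfrac{9}{4}$ the Lie algebra $\ff$ degenerates to $\fsl(2,\bbC) \ltimes \bbC^3$ (the complexification of $\fso(p,q)\ltimes\bbR^{p,q}$), whose compatible real forms are exactly the Euclidean algebras $\fe(p,q)$, again separated by the signatures $[3,1,2],[2,2,2],[0,4,2],[2,1,3],[0,3,3]$ appearing in the dictionary. Finally I would reconcile the list with Table \ref{F:AI}: the redundancy $A_{-1}\colon(a,\psi_\zeta,\widetilde\psi_\zeta)\mapsto(-a,\psi_\zeta,\widetilde\psi_{-\zeta})$ fixes $a=0$ while interchanging $\widetilde\psi_1\leftrightarrow\widetilde\psi_{-1}$ and $\widetilde\psi_i\leftrightarrow\widetilde\psi_{-i}$, which justifies the ``omit when $a=0$'' entries and confirms that Table \ref{F:D6AI} lists neither more nor fewer classes than Theorem \ref{T:D6AI}.

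The main obstacle is the degenerate regime: tracking the Killing signature as the parameter crosses $a=2$ and $a=\tfrac{3}{2}i$, and, crucially, verifying that the fixed-point algebras there are genuinely the semidirect-product Euclidean algebras $\fe(p,q)$ rather than merely algebras with the stated signature, since at those values the Killing form no longer determines the isomorphism class and the non-reductive Levi/radical structure must be exhibited by hand.
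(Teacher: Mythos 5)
Your proposal matches the paper's proof: the paper likewise computes the Killing form of $\ff^\psi$ on the tabulated real bases, notes that its determinant is a continuous real-valued function of $a$ which vanishes only at $a^2 \in \{4,-\tfrac{9}{4}\}$, so that the signature is constant on each of the intervals $[0,2)$, $(2,\infty)$, $i[0,\tfrac{3}{2})$, $i(\tfrac{3}{2},\infty)$ and can be read off from a single representative value (via {\tt Maple}), and then identifies the isomorphism type from the signature dictionary, including the degenerate signatures $[3,1,2],[2,2,2],[0,4,2],[2,1,3],[0,3,3]$ for the Euclidean cases. Your extra care at $a^2=4$ and $a^2=-\tfrac{9}{4}$ (exhibiting the Levi/radical structure directly rather than relying on signature alone) is a sensible refinement of the same method, not a different route.
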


 The proof of Proposition \ref{P:altAI} is a straightforward check using {\tt Maple}, where we have used that the Killing form of $\ff^\psi$ has determinant that is a continuous real-valued function of the parameter $a$, is non-zero for $a^2 \not\in \{ 4, -\frac{9}{4} \}$, so its signature is constant for $a$ lying in each of the intervals $[0,2)$, $(2,\infty)$, $i[0,\frac{3}{2})$, $i(\frac{3}{2},\infty)$.  Thus, it suffices to test a representative value from each interval.

 \begin{example} \label{X:real} 
 Consider the $\widetilde\psi_i$ case on the interval $a \in i[0,\frac{3}{2})$.  For $a=0$, the following {\tt Maple} code returns the signature $[3,3,0]$, so we conclude $\ff^\psi \cong \fso(1,3)$ for any $a \in i[0,\frac{3}{2})$.

 \begin{verbatim} 
restart: with(DifferentialGeometry): with(LieAlgebras): with(Tensor):
br:=[[t,x1]=x1, [t,x2]=-x2, [t,x4]=x4, [t,x5]=-x5,
[x1,x2]= -2*x3 + 3*a*t, [x1,x3]= +3*x4 + 2*a*x1, [x2,x3]= -3*x5 - 2*a*x2,
[x1,x5]= -a*x3 + 6*t, [x2,x4]= +a*x3 - 6*t, [x3,x4]= -(a^2+3)*x1,  
[x3,x5]= +(a^2+3)*x2, [x4,x5]= -2*x3 + (a^3-a)*t]:
LD:=LieAlgebraData(br,[t,x1,x2,x3,x4,x5],alg):
DGsetup(LD,[t,x1,x2,x3,x4,x5],[seq(omega||i,i=0..5)]):
a:=0: fp:=[I*t,x1+I*x2,x2+I*x1,x3,x4-I*x5,x5-I*x4]: # Enter (a,psi) data here.
LD2:=LieAlgebraData(fp): DGsetup(LD2):
map(nops,QuadraticFormSignature(KillingForm()));
 \end{verbatim}
 \end{example}

 From Proposition \ref{P:altAI}, we deduce the {\em real} Cartan-theoretic classification of the rolling sphere distribution:

 \begin{theorem} \label{T:rolling-classify}
 Let $\rho > 1$.  Let $a \in \bbR_{\geq 0} \cup i\bbR_{\geq 0}$ with $\cI = a^2 = \tfrac{4(\rho^2+1)^2}{(\rho+3)(\rho-3)(\rho+\tfrac{1}{3})(\rho-\tfrac{1}{3})} \in \bbR$.  The (real) rolling ball distribution $(M,\cD_\rho)$ corresponds to the  (real) algebraic model determined by:
 \begin{itemize}
 \item the (complex) $\sfD.6_a$ model, and
 \item the anti-involution $\widetilde\psi_{-1}$ if $\rho > 3$ (so $a > 2$), or $\widetilde\psi_i$ if $\rho \in (1,3)$ (so $a \in i(\tfrac{3}{2},\infty)$).
 \end{itemize}
 \end{theorem}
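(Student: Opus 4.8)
The plan is to leverage the explicit realization of the complexified rolling algebra as a $\sfD.6_a$ model obtained in \S\ref{S:gencase}, and then pin down the complex-conjugation anti-involution recovering $\ffr$. Recall from \eqref{E:gen-filtration}--\eqref{E:embed-rel} that $\ff = \ffr \otimes_\bbR \bbC$ sits in $\fg$ as $\sfD.6_a$ with $a^2 = \cI(\rho)$, the $\sfD.6_a$ basis $T, X_1, \dots, X_5$ being expressed through the real basis $v_0, \dots, v_5$ of $\ffr$, where $s_1 \in \bbC^\times$ is a free parameter and every other $s_i, t_j$ is determined. The real form $\ffr$ is then precisely the fixed-point set of the anti-involution $\psi$ of $(\ff;\fg,\fp)$ fixing each $v_j$ (extended $\bbC$-antilinearly), so the task is to recognise $\psi$, up to conjugacy in $\Aut(\ff;\fg,\fp)$, as one of the normal forms of Theorem \ref{T:D6AI}.

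First I would fix the parameter range via the monotonicity of $\cI$ recorded in \S\ref{S:gencase}: $\rho > 3$ yields $a^2 = \cI(\rho) \in (4,\infty)$, hence $a \in (2,\infty)$, while $\rho \in (1,3)$ yields $\cI(\rho) \in (-\infty,-\tfrac{9}{4})$, hence $a \in i(\tfrac{3}{2},\infty)$. This situates us in exactly the two columns of Table \ref{F:D6AI} in which the claimed anti-involutions appear.

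Next I would compute $\psi$ in the $\sfD.6_a$ basis. Since $T = i v_0$, antilinearity gives $\psi(T) = -T$, which already forces $\psi$ into the $\widetilde\psi$-family (the $\psi_\zeta$-family fixes $T$). From $X_1 = s_1(v_1 + i v_2)$ and $X_2 = s_2(v_1 - i v_2)$ one gets $\psi(X_1) = \tfrac{\bar s_1}{s_2} X_2 =: \zeta X_2$, and substituting $s_2 = -\tfrac{5a}{s_1(\rho^2+1)}$ from \eqref{E:embed-rel} gives
\[
\zeta = \frac{\bar s_1}{s_2} = -\frac{|s_1|^2(\rho^2+1)}{5a}.
\]
Only $|s_1|^2$ enters here, and $s_1$ is free (equivalently, one may conjugate $\psi$ by $\exp(t\,\ad_T)$), so I would normalise $|s_1|^2 = \tfrac{5|a|}{\rho^2+1}$, whereupon $\zeta = -|a|/a$; this equals $-1$ for $a \in (2,\infty)$ and $i$ for $a \in i(\tfrac{3}{2},\infty)$, identifying $\psi = \widetilde\psi_{-1}$ resp.\ $\psi = \widetilde\psi_i$ (both genuine anti-involutions of $\fg$ preserving $\fp$, cf.\ Table \ref{F:AI-relevant}). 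The remaining identities, e.g.\ $\psi(X_2) = \tfrac{\bar s_2}{s_1} X_1$ and the values on $X_3, X_4, X_5$, are then forced by \eqref{E:embed-rel} to be consistent with this single $\zeta$ and the $\widetilde\psi_\zeta$ template; this consistency check is the only genuinely computational part.

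The main obstacle is conceptual rather than technical: one must be sure that abstract knowledge of the real form (its symmetry is $\fso(3)\times\fso(3)$ by Theorem \ref{T:rolling}) pins down the correct anti-involution. This is safe because, within each relevant parameter range of Table \ref{F:D6AI}, $\widetilde\psi_{-1}$ (for $a \in (2,\infty)$) and $\widetilde\psi_i$ (for $a \in i(\tfrac{3}{2},\infty)$) are the \emph{unique} anti-involutions whose fixed-point algebra is $\fso(3)\times\fso(3)$, all others giving $\fsl(2,\bbR)\times\fsl(2,\bbR)$; so the direct computation above and the classification of Table \ref{F:D6AI} agree. Finally I would note that $\rho \in (1,3)\cup(3,\infty)$ exhausts $\rho > 1$ apart from the exceptional ratio $\rho = 3$, whose symmetry $\Lie(\GS)$ lies outside the $\sfD.6$ family and was treated in \S\ref{S:rolling}.
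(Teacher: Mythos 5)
Your proposal is correct. The paper's own proof is purely an elimination argument: it recalls $a^2=\cI(\rho)$ from \eqref{E:embed-rel} and then observes that, in the relevant parameter ranges $a\in(2,\infty)$ and $a\in i(\tfrac{3}{2},\infty)$, Table \ref{F:D6AI} lists $\widetilde\psi_{-1}$ resp.\ $\widetilde\psi_i$ as the \emph{only} anti-involutions with fixed-point algebra $\fso(3)\times\fso(3)$, which is what the rolling symmetry algebra is known to be. You include that argument as your final ``safety'' paragraph, but your primary route is different and more explicit: you compute the conjugation anti-involution directly from the embedding data of \S\ref{S:gencase}, using $T=iv_0\Rightarrow\psi(T)=-T$ to land in the $\widetilde\psi$-family and $\zeta=\bar s_1/s_2=-|s_1|^2(\rho^2+1)/(5a)$, normalised via the free parameter $s_1$ (equivalently conjugation by $\exp(t\,\ad_T)$) to $\zeta=-|a|/a$, which is indeed the unique fourth root of unity on the admissible ray. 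This is exactly the technique the paper itself deploys later in Theorem \ref{T:so13}; applied here it buys independence from prior knowledge of the isomorphism type of the real form (one no longer needs to invoke Theorem \ref{T:rolling} or read off which rows of Table \ref{F:D6AI} produce $\fso(3)\times\fso(3)$), at the cost of a short computation and the consistency check on $X_3,X_4,X_5$ that you correctly flag. Both arguments are valid and reach the same conclusion.
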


 \begin{proof} The formula for $a^2$ was found in \eqref{E:embed-rel} when identifying the (complex) $\sfD.6_a$ model $(\ff;\fg,\fp)$.  From Table \ref{F:D6AI}, there are only two possibilities for $\fso(3) \times \fso(3)$ arising as a real form, and the indicated ranges $a \in i(\frac{3}{2},\infty) \cup (2,\infty)$ are covered for $\rho \in (1,3) \cup (3,\infty)$.  The result immediately follows.
 \end{proof}

 \begin{footnotesize}
 \begin{table}[h]
 \[
 \begin{array}{|l|c|c|c|c|} \hline
 \multicolumn{3}{|c}{\mbox{Willse's classification}} & \multicolumn{2}{|c|}{\mbox{Our classification}}\\ \hline
 \mbox{Label} & \mbox{Lie algebra} & \mbox{$\lambda$ domain} & \mbox{Anti-involution} & \mbox{$a^2 = \frac{4(\lambda+1)^2}{(\lambda-9)(\lambda-\frac{1}{9})}$ range}\\ \hline\hline
 {\rm {\bf  D.6}}{}^{2-}_\lambda & \fsl(2,\bbR) \times \fsl(2,\bbR) & [-1,0) & \psi_1 & [0,4) \\
 && (0,\frac{1}{9}) & \psi_1 & (4,\infty)\\
 && (\frac{1}{9},1) & \psi_i & (-\infty,-\frac{9}{4}) \\
 {\rm {\bf  D.6}}{}^{2+}_\lambda & \fsl(2,\bbR) \times \fsl(2,\bbR) & (0,\frac{1}{9}) & \widetilde\psi_1 &(4,\infty)\\
 && (\frac{1}{9},1) & \widetilde\psi_{-i} & (-\infty,-\frac{9}{4})\\
 {\rm {\bf  D.6}}{}^{4}_\lambda & \fsl(2,\bbR) \times \fso(3) & (-\infty,-1) & \widetilde\psi_{-1} & (0,4)\\
 & & [-1,0) & \widetilde\psi_1 & [0,4)\\
 {\rm {\bf D.6}}{}^6_\lambda & \fso(3) \times \fso(3) & (0,\frac{1}{9}) & \widetilde\psi_{-1} & (4,\infty)\\
 & \fso(3) \times \fso(3) & (\frac{1}{9},1) & \widetilde\psi_i & (-\infty,-\frac{9}{4})\\
 {\rm {\bf D.6}}{}^{3-}_{-1} & \fso(1,3) & - & \psi_{i} & 0\\
 {\rm {\bf D.6}}{}^{3+}_{-1} & \fso(1,3) & - & \widetilde\psi_{i} & 0\\ \hline
 {\rm {\bf  D.6}}{}^{1}_\infty & \fsl(2,\bbR) \times \fe(1,1) & - & \psi_1 & 4\\
 {\rm {\bf  D.6}}{}^{2}_\infty & \fsl(2,\bbR) \times \fe(2) & - & \widetilde\psi_1 & 4\\
 {\rm {\bf  D.6}}{}^{4}_\infty & \fso(3) \times \fe(2) & - & \widetilde\psi_{-1} & 4\\ \hline
 {\rm {\bf  D.6}}{}^{1-}_* & \fe(1,2) & - & \psi_i & -\frac{9}{4}\\
 {\rm {\bf  D.6}}{}^{1+}_* & \fe(1,2) & - & \widetilde\psi_{-i} & -\frac{9}{4}\\
 {\rm {\bf  D.6}}{}^{3}_* & \fe(3) & - & \widetilde\psi_i & -\frac{9}{4}\\ \hline
 \end{array}
 \]
 \caption{Comparison with Willse \cite[Table 8]{Willse2019}.  (Take $a$ in the ranges of Table \ref{F:D6AI}.)}
 \label{F:Willse}
 \end{table}
 \end{footnotesize}

 In Table \ref{F:Willse}, we confirm almost all of Willse's classification \cite[Tables 8]{Willse2019}.  Minor corrections:
 \begin{itemize}
 \item ${\rm {\bf  D.6}}{}^{6}_\lambda$: $\fd/\fk$ should be $\bA + \sqrt{\lambda} \bA',\,\,\bB + \sqrt{\lambda} \bB'$.
 \item ${\rm {\bf  D.6}}{}^{3\pm}_{-1}$: we should have the brackets ${\bf [D_A,D_B] = -C, \, [D_B,D_C] = -A,\, [D_C,D_A] = -B}$.
 \end{itemize}
 For $\fso(1,3)$-models, we agree for $a = 0$ (two inequivalent models), but we additionally found three inequivalent $\fso(1,3)$-models for each $a \in i(0,\tfrac{3}{2})$.  (Compare also with \cite[p.45-46]{Zhi2021}.)  These missing models are described in \S\ref{S:so13}.  Generating Table \ref{F:Willse} follows similar techniques as in \S\ref{S:so13}, so we omit these details here.

 \subsection{Real $(2,3,5)$-distributions with $\fso(1,3)$ symmetry} 
 \label{S:so13}
 While Lie-theoretic descriptions underlying $\fso(1,3)$-invariant $(2,3,5)$-structures can be deduced from Table \ref{F:D6AI}, let us use more familiar bases.  Let
 \begin{itemize}
 \item $\{ \sfH,\sfX,\sfY \}$ be a standard $\fsl_2$-triple with brackets $[\sfH,\sfX] = 2\sfX, \, [\sfH,\sfY] = -2\sfY,\, [\sfX,\sfY] = \sfH$;
 \item $\{ \sfA, \sfB, \sfC \}$ be a standard $\fso(3)$-basis with bracket $[\sfA,\sfB] = \sfC, \, [\sfB,\sfC] = \sfA, \, [\sfC,\sfA] = \sfB$.
 \end{itemize}
 For $\ffr = \fso(1,3) \cong \fsl(2,\bbC)_\bbR$, take real bases $\{ \sfH,\sfX,\sfY, \sfI\sfH,\sfI\sfX,\sfI\sfY\}$ or $\{ \sfA,\sfB,\sfC, \sfI\sfA,\sfI\sfB,\sfI\sfC\}$, where $\sfI\sfH := i\sfH$, etc.

 \begin{theorem} \label{T:so13}
 The classification of real $(2,3,5)$-distributions with symmetry $\ffr = \fso(1,3)$ is given by:
 \begin{align}
 \begin{array}{|c|c|c|c|c|} \hline
 \ffr^0 & \ffr^{-1}/\,\ffr^0 & \mbox{Restrictions / redundancy} & a^2 \mbox{ (for $\sfD.6_a$)} & \psi\\ \hline\hline
 \sfH & \begin{array}{l} \sfX + \alpha\, \sfI\sfX,\\ \sfY + \alpha\, \sfI\sfY \end{array} & \begin{array}{c} \alpha \in \bbR^\times\\ (\alpha \sim -\alpha \sim \frac{1}{\alpha})\end{array} & -\frac{9(\alpha^2 - 1)^2}{(\alpha^2+4)(4\alpha^2+1)} \in (-\frac{9}{4},0] & \psi_i\\ \hline
 \sfC & \begin{array}{l} \sfA+ \alpha\,\sfI\sfA, \\ 
 \sfB + \alpha\,\sfI\sfB \end{array} & \begin{array}{c} \alpha \in \bbR^\times\\ (\alpha \sim -\alpha)\end{array} & -\frac{9(\alpha^2 - 1)^2}{(\alpha^2+4)(4\alpha^2+1)} \in (-\frac{9}{4},0] & \begin{array}{c} \begin{cases} \widetilde\psi_i, & 0 < \alpha^2 \leq 1,\\
 \widetilde\psi_{-i}, & \alpha^2 > 1,
 \end{cases} \\ \mbox{assuming } a \in i[0,\frac{3}{2}).\end{array}\\ \hline
 \end{array}
 \end{align}
 \end{theorem}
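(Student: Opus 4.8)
The plan is to reduce to the real-form classification of $\sfD.6$ models already obtained in Theorem~\ref{T:D6AI} and Table~\ref{F:D6AI}, and then to translate the resulting fixed-point data into the two stated bases. Since $\ffr=\fso(1,3)$ is simple, $\ffr\otimes_\bbR\bbC\cong\fsl(2,\bbC)\times\fsl(2,\bbC)$ is semisimple; among the complex multiply-transitive models (Table~\ref{F:alg-models}) only the $\sfD.6$ family has semisimple $\ff$ (Corollary~\ref{C:D6ss}, the $\sfN$ cases being non-reductive), so any non-flat $\ffr$-symmetric $(2,3,5)$-structure complexifies to some $\sfD.6_a$ with $a^2\in\bbR$. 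Its isotropy obeys $\ffr^0\otimes\bbC=\ff^0=\langle T\rangle$ with $T=h_{01}$, and any admissible anti-involution satisfies $\psi(T)=\pm T$ (proof of Theorem~\ref{T:D6AI}). From Table~\ref{F:D6AI} the fixed algebra is $\fso(1,3)$ exactly for $\psi=\psi_i$ and $\psi=\widetilde\psi_{\pm i}$, on the range $a\in i[0,\tfrac32)$ forced there, i.e.\ $a^2\in(-\tfrac94,0]$. As $\psi_i(T)=T$ makes $T$ hyperbolic ($\ad_T$ has real eigenvalues), this case must match $\ffr^0=\langle\sfH\rangle$; as $\widetilde\psi_{\pm i}(T)=-T$ makes the real generator $iT$ elliptic ($\ad_{iT}$ has imaginary eigenvalues), these match $\ffr^0=\langle\sfC\rangle$.

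First I would fix the admissible distributions. For $\ffr^0=\langle\sfH\rangle$ the $\ad_\sfH$-eigenspaces on $\ffr$ are $\langle\sfX,\sfI\sfX\rangle$, $\langle\sfH,\sfI\sfH\rangle$, $\langle\sfY,\sfI\sfY\rangle$ (eigenvalues $+2,0,-2$), and the only $\ad_\sfH$-invariant $3$-planes $\ffr^{-1}\supset\langle\sfH\rangle$ whose weak derived flag realises the $(2,3,5)$-growth are $\langle\sfH,\,\sfX+\alpha\sfI\sfX,\,\sfY+\beta\sfI\sfY\rangle$: the growth is obstructed precisely when the $\sfI\sfH$-component of $[\sfX+\alpha\sfI\sfX,\sfY+\beta\sfI\sfY]=(1-\alpha\beta)\sfH+(\alpha+\beta)\sfI\sfH$ vanishes (the remaining invariant planes are subalgebras or have derived flag stabilizing in rank $4$, hence are excluded). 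The stabilizer torus $\exp(\bbR\,\ad_\sfH)\exp(\bbR\,\ad_{\sfI\sfH})$ rotates the lines in $\langle\sfX,\sfI\sfX\rangle$ and $\langle\sfY,\sfI\sfY\rangle$ with opposite complex-orientation, so only the sum of the line-angles survives as a single real modulus, which I normalize to the symmetric form $\beta=\alpha$ of the table; the $\langle\sfC\rangle$ case is verbatim with $\{\sfA,\sfB,\sfC\}$ in place of $\{\sfX,\sfY,\sfH\}$.

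Next I would compute the classifying invariant. Following the method of \S\ref{S:gencase}, I complexify, write the most general $\ad_T$-eigenbasis-adapted isomorphism from the abstract $\sfD.6_a$ basis $\{T,X_1,\dots,X_5\}$ (Table~\ref{F:Fstr}) to the filtered basis of $\ffr\otimes\bbC$, impose the $\sfD.6_a$ brackets, and solve for the fitting parameters together with $a$ and the conjugation $\psi$ (by {\tt Maple}, as in the rolling computation). This should return $a^2=-\tfrac{9(\alpha^2-1)^2}{(\alpha^2+4)(4\alpha^2+1)}$, a rational surjection onto $(-\tfrac94,0]$ with $a^2=0$ iff $\alpha^2=1$ and $a^2\to-\tfrac94$ as $\alpha^2\to0,\infty$, and it should identify the conjugation of $\ffr\otimes\bbC$ over $\ffr$ as $\psi_i$ (hyperbolic case) or $\widetilde\psi_{\pm i}$ (elliptic case). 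I expect this matching to be the main obstacle: as in \S\ref{S:gencase} the fitting system is rigid but the algebra is heavy, and one must carefully track the relative orientation of the complex structure on the two $\ad_T$-eigenspaces, which is exactly what distinguishes $\alpha$ from $1/\alpha$ below.

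Finally I would settle the redundancy. Because the complex $\sfD.6_a$ model is classified up to $\Aut(\ff;\fg,\fp)$ by $a^2$ (Theorem~\ref{T:typeD}), and $a^2(\alpha)=a^2(-\alpha)=a^2(1/\alpha)$, two parameters give $\Aut(\ffr)$-equivalent real structures iff they share both $a^2$ and the conjugacy class of their anti-involution (Definition~\ref{D:AI}). For $\ffr^0=\langle\sfH\rangle$ the conjugation is $\psi_i$ for every $\alpha$, so $\alpha,-\alpha,1/\alpha$ are all equivalent, giving $\alpha\sim-\alpha\sim1/\alpha$. For $\ffr^0=\langle\sfC\rangle$ the involution alternates between $\widetilde\psi_i$ and $\widetilde\psi_{-i}$ across $\alpha^2=1$ (the point $a^2=0$ where the two coincide, matching the $A_{-1}$-redundancy of Table~\ref{F:AI}); thus $\alpha$ and $1/\alpha$ carry inequivalent involutions at equal $a^2$, leaving only $\alpha\sim-\alpha$ and yielding $\psi=\widetilde\psi_i$ for $0<\alpha^2\le1$ and $\psi=\widetilde\psi_{-i}$ for $\alpha^2>1$ under the standing range $a\in i[0,\tfrac32)$. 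Reading off the $\bbR$-bases of $\ff^\psi$ from Table~\ref{F:D6AI} then reproduces the stated bases of $\ffr$, establishing both completeness and the redundancy relations.
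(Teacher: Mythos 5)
Your proposal follows essentially the same route as the paper: reduce to the complex $\sfD.6_a$ classification and the anti-involution table (Table \ref{F:D6AI}), fit the complexified filtered Lie algebra to the $\sfD.6_a$ brackets as in \S\ref{S:gencase} to extract $a^2=-\tfrac{9(\alpha^2-1)^2}{(\alpha^2+4)(4\alpha^2+1)}$, and distinguish $\psi_i$ from $\widetilde\psi_{\pm i}$ by whether the real isotropy generator acts with real or imaginary eigenvalues -- this last dichotomy is exactly the paper's argument. Your explicit enumeration of the admissible $\ad_{\ffr^0}$-invariant $3$-planes and the torus normalization to $\beta=\alpha$ are welcome additions that the paper leaves implicit.

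The one genuine weak point is the $\widetilde\psi_i$ versus $\widetilde\psi_{-i}$ split at $\alpha^2=1$. Your final paragraph argues that the involution ``must alternate'' because $\alpha$ and $1/\alpha$ carry inequivalent anti-involutions at equal $a^2$ -- but that inequivalence is precisely what is at stake: a priori both sides of $\alpha^2=1$ could yield $\widetilde\psi_i$, in which case $\alpha\sim 1/\alpha$ would be a redundancy of the $\sfC$-family just as it is of the $\sfH$-family, and the table's case split would be wrong. The paper resolves this by an explicit reality check: choosing $s_1$ with $(s_1)^2=\tfrac{5ai}{2(\alpha^2-1)}$ so that the ansatz aligns with the real bases of Table \ref{F:D6AI}, one finds $s_1\in\bbR$ (hence $X_1+iX_2$ and $X_2+iX_1$ real, i.e.\ $\psi=\widetilde\psi_i$) exactly when $0<\alpha^2<1$, and $s_1\in i\bbR$ (i.e.\ $\psi=\widetilde\psi_{-i}$) when $\alpha^2>1$. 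Your stated plan to ``solve for $\psi$ by {\tt Maple}'' would surface this, but the structural argument as written is circular and should be replaced by, or supplemented with, that computation.
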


 \begin{proof} The indicated redundancies are induced by the following automorphisms $A \in \Aut(\ffr)$ preserving $\ffr^0$:
 \begin{itemize}
 \item $A(\sfH,\sfX,\sfY,\sfI\sfH,\sfI\sfX,\sfI\sfY) = (\sfH,\sfX,\sfY,-\sfI\sfH,-\sfI\sfX,-\sfI\sfY)$ and $A(\sfA,\sfB,\sfC,\sfI\sfA,\sfI\sfB,\sfI\sfC) = (\sfA,\sfB,\sfC,-\sfI\sfA,-\sfI\sfB,-\sfI\sfC)$ induce $\alpha \sim -\alpha$.
 \item $A(\sfH,\sfX,\sfY,\sfI\sfH,\sfI\sfX,\sfI\sfY) = (-\sfH,\sfI\sfY,-\sfI\sfX,\sfI\sfH,\sfY,-\sfX)$ induces $\alpha \sim \frac{1}{\alpha}$.
 \end{itemize}
 
 Now proceed as in \S\ref{S:gencase}.  To identify $a^2$, let $\ff = \ffr \otimes_\bbR \bbC$ and specify $\ff = \ff^{-3} \supset \ff^{-2} \supset \ff^{-1} \supset \ff^0$ via an adapted basis $(v_0,v_1,v_2,v_3,v_4,v_5)$ with $\ff^0 = \langle v_0 \rangle$, $\ff^{-1} / \ff^0 = \langle v_1, v_2 \rangle$, $\ff^{-2} / \ff^{-1} = \langle v_3 \rangle$, and $\ff^{-3} / \ff^{-2} = \langle v_4, v_5 \rangle$.
 \begin{align}
 \begin{array}{|c|c|c|} \hline
 \mbox{Case} & (v_0, v_1,v_2,v_3,v_4,v_5) & (T,X_1,X_2,X_3,X_4,X_5) \mbox{ ansatz}\\ \hline\hline
 \mbox{(i)} & \begin{array}{c}
 (\frac{\sfH}{2},\, \sfX + \alpha\,\sfI\sfX,\, \sfY + \alpha\,\sfI\sfY,\, \\
 \sfI\sfH,\, \alpha\sfX - \sfI\sfX,\, \alpha\sfY - \sfI\sfY)
 \end{array} & \begin{array}{c} (v_0, s_1 v_1, s_2 v_2, s_3 v_3 + t_1 v_0, s_4 v_4 + t_2 v_1, s_5 v_5 + t_3 v_2) \end{array}\\ \hline
  \mbox{(ii)} & \begin{array}{c}
 (-i\sfC,\, \sfA + \alpha\,\sfI\sfA,\, \sfB + \alpha\,\sfI\sfB,\, \\
 \sfI\sfC,\, -\alpha\sfB + \sfI\sfB,\, \alpha\sfA - \sfI\sfA)
 \end{array} & \begin{array}{c} (v_0, s_1 (v_1 - i v_2), s_2 (v_1 + i v_2), s_3 v_3 + t_1 v_0, \\ s_4 (v_4 - i v_5) + t_2 (v_1 - i v_2), s_5 (v_4 + iv_5) + t_3 (v_1 + i v_2)) \end{array}\\ \hline
 \end{array}
 \end{align}
 The ansatz for $(T,X_1,...,X_5)$ is chosen by requiring that $T$ act with eigenvalues $0, \pm 1$ (each of multiplicity 2), i.e.\ matching those in Table \ref{F:Fstr} ($\sfD.6_a$).  (Note that $s_i \in \bbC^\times, t_i \in \bbC$.). Imposing brackets, we obtain:
 \begin{align}
 \begin{array}{|c|c|c|}\hline
 \mbox{Case} & \alpha^2 & (s_2,s_3,s_4,s_5,t_1,t_2,t_3) \\ \hline\hline
 \mbox{(i)} & \neq 1 & (\tfrac{-5a}{2s_1(\alpha^2-1)},
 \tfrac{5a\alpha}{2(\alpha^2-1)},
 \tfrac{5as_1\alpha}{3(\alpha^2-1)}, 
 \tfrac{75\alpha}{2s_1(\alpha^2+4)(4\alpha^2+1)},
 -a, -\tfrac{as_1}{3}, \tfrac{-15(\alpha^2-1)}{2s_1(\alpha^2+4)(4\alpha^2+1)}) \\ \hline
 \mbox{(i)} & 1 & (\frac{3i}{2s_1},-\frac{3i\alpha}{2},-i \alpha s_1, \frac{3\alpha}{2s_1},0,0,0) \\ \hline
 \mbox{(ii)} & \neq 1 & (\tfrac{5a}{2s_1(\alpha^2-1)},
 \tfrac{-5ia\alpha}{\alpha^2-1},
 \tfrac{5ias_1\alpha}{3(\alpha^2-1)}, 
 \tfrac{75i\alpha}{2s_1(\alpha^2+4)(4\alpha^2+1)},
 -a, -\tfrac{as_1}{3}, \tfrac{15(\alpha^2-1)}{2s_1(\alpha^2+4)(4\alpha^2+1)}) \\ \hline
 \mbox{(ii)} & 1 & (\frac{3i}{2s_1},3\alpha,-\alpha s_1, \frac{3i\alpha}{2s_1},0,0,0) \\ \hline
 \end{array}
 \end{align}
 along with \framebox{$a^2 = -\frac{9(\alpha^2 - 1)^2}{(\alpha^2+4)(4\alpha^2+1)}$}, which has image in $(-\frac{9}{4},0]$.
 
 We now pin down the anti-involution $\psi$.  From Table \ref{F:D6AI}, the only $\fso(1,3)$ case where the (real) isotropy acts with real eigenvalues is the $\psi_i$ case, so we easily match this with our case (i).  Focus now on (ii).   
 When $\alpha^2 = 1$, we have $a=0$, so Table \ref{F:D6AI} indicates that $\psi = \widetilde\psi_i$ is the only possibility.

 Consider (ii) with $\alpha^2 \neq 1$.  Pick $s_1$ below to align with the real bases in Table \ref{F:D6AI} provided by $\widetilde\psi_i$ or $\widetilde\psi_{-i}$.  Since $a^2 = -\frac{9(\alpha^2 - 1)^2}{(\alpha^2+4)(4\alpha^2+1)} \in (-\frac{9}{4}, 0)$, then  WLOG $a \in i(0,\frac{3}{2})$.  Further, require $s_1 = i s_2$, i.e.\ $(s_1)^2 = \frac{5ai}{2(\alpha^2 - 1)}$.
 \begin{enumerate}
 \item \framebox{$0 < \alpha^2 < 1$}: We have $s_1 \in \bbR$.  Then $X_1+iX_2 = 2s_1 v_1$ and $X_2 + i X_1 = 2s_1 v_2$ are real, so $\psi = \widetilde\psi_i$.
 \item \framebox{$\alpha^2 > 1$}: We have $s_1 \in i\bbR$.  Then $X_1-iX_2 = -2is_1 v_2$ and $X_2 - i X_1 = -2is_1 v_1$ are real, so $\psi = \widetilde\psi_{-i}$.
 \end{enumerate}
 \end{proof}
 
 \section{Holonomy and almost-Einstein $(2,3,5)$-distributions}
 \label{S:holAE}
 Using \eqref{E:hol}, we easily compute the holonomy algebra $\hol \subset \fg$ for each of the algebraic models that we have found.  Some cases were already known, in particular \cite{Willse2014} ($\sfN.7$), \cite{Willse2018} ($\sfN.6$), and \cite[Example 60]{SW2017a} (two real $\sfD.6_0$ cases), but to our knowledge the general $\sfD.6$ case had not previously been treated.
 
 \begin{theorem} For non-flat, complex, multiply-transitive $(2,3,5)$-distributions, $\hol$ is given by:
 \begin{align}
 \begin{array}{|c|c|c|c|} \hline
  \sfN.7_c & \sfN.6 & \sfD.6_{a \neq 0} & \sfD.6_0\\ \hline\hline
 \langle f_{01}, f_{11}, f_{21}, f_{31}, f_{32} \rangle & \fg & \fg & \begin{array}{c} \langle h_{01}, e_{01}, f_{01},    f_{21} + e_{21},\\
 f_{10} + e_{32}, f_{11} + e_{31},\\
   f_{31} + e_{11}, f_{32} + e_{10} \rangle \end{array} \\ \hline
 \end{array}
 \end{align}
 For $\sfN.7_c$, we have $\hol \cong \mathfrak{heis}_5$ (5-dimensional Heisenberg algebra).  For $\sfD.6_0$, we have $\hol \cong \fsl(3,\bbC)$.
 \end{theorem}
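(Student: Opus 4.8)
The plan is to compute $\hol$ directly from the Ambrose--Singer recursion \eqref{E:hol}, model by model, reading off the curvature $\kappa$ and the frame $\{T,X_1,\dots,X_5\}$ (or $\{N,X_1,\dots,X_5\}$) from Table \ref{F:alg-models} and then iterating the adjoint $\fg$-action of $\ff$ via the bracket table (Table \ref{F:G2br}) until the increasing chain $\hol^0 \subset \hol^1 \subset \cdots$ stabilizes. In each case $\hol^0 = \langle \kappa(x,y) : x,y \in \ff\rangle$ is the image of the curvature, and each subsequent step appends $[\ff,\hol^{i-1}]$, where the bracket is that of $\fg$.

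For $\sfN.7_c$, the curvature is the single term $\kappa = f_{10}^* \wedge f_{31}^* \otimes f_{01}$, so $\hol^0 = \langle f_{01}\rangle$. Bracketing with the frame produces $f_{11}$ and $f_{32}$ (from $[X_1,f_{01}] = -f_{11}$ and $[X_4,f_{01}] = f_{32}$), then $f_{21}$, then $f_{31}$, after which the chain closes on $\langle f_{01},f_{11},f_{21},f_{31},f_{32}\rangle$. I would then verify closure and read off from Table \ref{F:G2br} that the only nonzero brackets among these are $[f_{01},f_{31}] = -f_{32}$ and $[f_{11},f_{21}] = -3f_{32}$; thus $f_{32}$ is central and the quotient is a $4$-dimensional symplectic vector space, identifying $\hol \cong \mathfrak{heis}_5$.

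For $\sfN.6$ and $\sfD.6_{a\neq 0}$ I expect the recursion to exhaust $\fg$: here $\kappa$ carries the higher-homogeneity components (the $\kappa_5,\dots,\kappa_8$ resp.\ $\kappa_6,\kappa_8$ terms), whose values already reach into $\fg_{\geq 0}$, and a short iteration shows $\dim\hol = 14$, whence $\hol = \fg$. This is the least delicate part: it suffices to track dimensions until $14$ is attained.

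The subtle case is $\sfD.6_0$, where $\kappa = -4\kappa_4$ gives $\hol^0 = \langle h_{01},e_{01},f_{01}\rangle \cong \fg_0^{\ss}$; bracketing with $\{X_1,\dots,X_5\}$ (here each $X_i$ is of the form $f_{\cdots}+e_{\cdots}$) regenerates $X_1,\dots,X_5$ (for instance $[X_1,f_{01}] = -(f_{11}+e_{31}) = -X_2$), so the chain stabilizes on the $8$-dimensional space $\langle h_{01},e_{01},f_{01},X_1,\dots,X_5\rangle$ listed in the statement. The main obstacle is the abstract identification of this subalgebra: rather than matching it to the standard long-root $A_2 \subset \fg$, I would argue that it is $8$-dimensional with nondegenerate Killing form (a single determinant check, as in Corollary \ref{C:D6ss}), hence semisimple; since the only $8$-dimensional complex semisimple Lie algebra is $\fsl(3,\bbC)$, the identification follows. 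One must also confirm that the recursion genuinely closes at dimension $8$ and does not leak further into $\fg$, which is immediate once closure of the listed span under the $\fg$-bracket is checked.
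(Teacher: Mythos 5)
Your proposal is correct and follows essentially the same route as the paper: compute $\hol^0$ from the curvature of each model in Table \ref{F:alg-models}, iterate the Ambrose--Singer recursion \eqref{E:hol} until it stabilizes, and for $\sfD.6_0$ identify the resulting $8$-dimensional subalgebra as $\fsl(3,\bbC)$ via non-degeneracy of its Killing form and a dimension count of complex semisimple Lie algebras. The individual bracket computations you sketch (e.g.\ $[X_1,f_{01}]=-f_{11}$ for $\sfN.7_c$ and $[X_1,f_{01}]=-X_2$ for $\sfD.6_0$) agree with Table \ref{F:G2br} and with the intermediate data recorded in the paper's proof.
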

 
 \begin{proof} Use the algebraic models $(\ff;\fg,\fp)$ in Table \ref{F:alg-models}.  The starting point is $\hol^0 = \langle \kappa(x,y) : x,y \in \ff \rangle$.
 \begin{align}
 \begin{array}{|c||c|c|c|c|} \hline
 \mbox{Model} & \sfN.7 & \sfN.6 & \sfD.6_{a \neq 0} & \sfD.6_0\\ \hline
 \hol^0 & f_{01} & \begin{array}{c} e_{10} - 3 f_{01},\\ e_{21} + \frac{8}{3} f_{01},\\ e_{31}, \,\, e_{32} + \frac{2}{3} f_{01} \end{array} & \begin{array}{c}
 e_{01}, h_{01}, f_{01}, e_{21},\\
 e_{10} - \frac{3a}{4} e_{31},\\
 e_{11} - \frac{3a}{4} e_{32}
 \end{array} & e_{01}, f_{01}, h_{01}\\ \hline
 \end{array}
 \end{align}
 Directly applying \eqref{E:hol} yields the claimed results, e.g. for $\sfD.6_0$, we obtain $\hol^0 \subset \hol^1 \subset \hol^2 = \hol$, with
 \begin{align*}
 \hol^1 / \hol^0 \equiv \langle f_{32} + e_{10},\, f_{31} + e_{11}, \, f_{10} + e_{32},\, f_{11} + e_{31} \rangle, \quad \hol^2 / \hol^1 \equiv \langle f_{21} + e_{21} \rangle.
 \end{align*}
 We directly verify that the Killing form on $\hol$ is non-degenerate, so $\hol$ is 8-dimensional complex semisimple.  The only complex simple Lie algebra of smaller dimension is $\fsl(2,\bbC)$, so $\hol$ is simple and $\hol \cong \fsl(3,\bbC)$. 
 \end{proof}
 
 Turn now to real $(2,3,5)$-distributions.  Given an anti-involution $\psi$ of $(\ff;\fg,\fp)$, the real form $\ff^\psi$ has holonomy that is the fixed point set under $\psi$ of the complexified holonomy.  We conclude that {\em real forms of $\sfN.6$ and $\sfD.6_{a \neq 0}$ have full holonomy (the split real form of $\Lie(G_2)$)}.  In particular, using Theorem \ref{T:rolling-classify}:
 
 \begin{cor}
 The real rolling ball distribution $(M,\cD_\rho)$ for $\rho > 1$ has full holonomy.
 \end{cor}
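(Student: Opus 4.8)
The plan is to reduce the Corollary to three ingredients already established: the identification of the real rolling model in Theorem~\ref{T:rolling-classify}, the complex holonomy computation in the theorem immediately above, and the general principle (recorded in the remark just before the Corollary) that the real holonomy is the $\psi$-fixed part of the complexified holonomy. First I would invoke Theorem~\ref{T:rolling-classify}: for $\rho\in(1,3)\cup(3,\infty)$ the real distribution $(M,\cD_\rho)$ is the real form of the complex $\sfD.6_a$ algebraic model determined by the anti-involution $\widetilde\psi_{-1}$ (when $\rho>3$, so $a>2$) or $\widetilde\psi_i$ (when $\rho\in(1,3)$, so $a\in i(\tfrac32,\infty)$), with classifying invariant $a^2=\cI(\rho)=\tfrac{4(\rho^2+1)^2}{(\rho+3)(\rho-3)(\rho+\frac13)(\rho-\frac13)}$.

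Next I would check that we always land in the $a\neq 0$ regime. Since the numerator $4(\rho^2+1)^2$ is strictly positive for every real $\rho$, we have $\cI(\rho)\neq 0$, hence $a^2\neq 0$. Thus $(M,\cD_\rho)$ is a real form of $\sfD.6_{a\neq 0}$ and never of the degenerate $\sfD.6_0$ model (whose complex holonomy is only $\fsl(3,\bbC)$). By the complex holonomy theorem above, the complexified infinitesimal holonomy of $\sfD.6_{a\neq 0}$ is all of $\fg=\Lie(G_2)$.

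The final step transports this to the real setting. Because $\hol$ is built from $\kappa$ and $[\cdot,\cdot]_\ff$ via \eqref{E:hol}, and the anti-involution $\psi$ preserves $\ff$ and satisfies $\psi^*\kappa=\kappa$ (Definition~\ref{D:AI}), the real infinitesimal holonomy is the $\psi$-fixed subalgebra $\hol^\psi=\fg^\psi$. As $\fg^\psi$ is a real form of the complex exceptional algebra $\fg=\Lie(G_2)$, and the only real form admitting the required real parabolic (equivalently, the ambient type of any real $(2,3,5)$-geometry) is the split one, we get $\hol^\psi=\fg^\psi\cong\Lie(\GS)$, i.e.\ full holonomy, exactly as asserted for real forms of $\sfD.6_{a\neq 0}$ in the preceding remark. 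There is essentially no computational obstacle here once Theorem~\ref{T:rolling-classify} and the complex holonomy table are granted; the only point demanding care is $\rho=3$, where the $\sfD.6_a$ description degenerates ($\cI(\rho)$ has a pole), the geometry becomes the flat type $\sfO$ model with $\kappa=0$, and the infinitesimal holonomy is trivial. Accordingly I would read the statement for $\rho\neq 3$ (matching the two cases of Theorem~\ref{T:rolling-classify}) and flag the exceptional ratio explicitly rather than claim full holonomy there.
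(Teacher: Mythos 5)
Your proof is correct and follows essentially the same route as the paper: identify the rolling model as a real form of $\sfD.6_{a\neq 0}$ via Theorem~\ref{T:rolling-classify}, take the complex holonomy $\fg$ from the preceding theorem, and pass to the $\psi$-fixed points to obtain the split real form. Your caveat about $\rho=3$ is well taken — the blanket ``$\rho>1$'' is slightly loose there, since the $3{:}1$ model is flat with trivial infinitesimal holonomy, and the intended scope is $\rho\in(1,3)\cup(3,\infty)$.
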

 
 For $\sfN.7_c$ real forms, we have:
 \begin{align}
 \begin{array}{|c|c|c|c|c|c|} \hline
 \mbox{Parameter} & \psi &  \mbox{$\bbR$-basis for } \hol \\ \hline\hline
 c \in [0,\infty) & \psi_1 & \langle f_{01}, f_{11}, f_{21}, f_{31}, f_{32} \rangle \\  \hline
 c \in (0,\infty) & \psi_{-1} & \langle f_{01}, i f_{11}, f_{21}, i f_{31}, i f_{32} \rangle \\ \hline
 c \in i[0,\infty) & \psi_i & \langle f_{01}, (1+i) f_{11}, if_{21}, (1-i) f_{31}, (1-i) f_{32} \rangle \\ \hline
 c \in i(0,\infty) & \psi_{-i} & \langle f_{01}, (1-i) f_{11}, if_{21}, (1+i) f_{31}, (1+i) f_{32} \rangle \\ \hline
 \end{array}
 \end{align}
 In each case, $\hol$ is isomorphic to the (real) 5-dimensional Heisenberg algebra.
 
 The other case of reduced holonomy is the $\sfD.6_0$ case, for which real form data is given in Table \ref{F:hol-R}.     The isomorphism type of $\hol$ is recognized via the signature of the Killing form (computed in {\tt Maple}).
 
 \begin{footnotesize}
 \begin{table}[h]
 \[
 \begin{array}{|c|c|c|c|c|c|} \hline
 \psi & \ff^\psi &  \mbox{$\bbR$-basis for } \hol & \hol \mbox{ isomorphism type} \\ \hline\hline
 \psi_1 & \fsl(2,\bbR) \times \fsl(2,\bbR) & 
 \begin{array}{c} \langle h_{01}, e_{01}, f_{01}, f_{21} + e_{21},\\
 f_{10} + e_{32}, f_{11} + e_{31},\\
   f_{31} + e_{11}, f_{32} + e_{10} \rangle \end{array} & \fsl(3,\bbR)\\ \hline
 \widetilde\psi_1 & \fsl(2,\bbR) \times \fso(3) & \begin{array}{c} \langle ih_{01}, f_{01} + e_{01}, i(f_{01} - e_{01}), i(f_{21} + e_{21}),\\
 f_{10} + e_{32} + f_{11} + e_{31},  i(f_{10} + e_{32} - f_{11} - e_{31}),\\
 f_{31} + e_{11} + f_{32} + e_{10},  i(f_{31} + e_{11} - f_{32} - e_{10}) \rangle \end{array} & \fsu(1,2)\\ \hline
 \psi_i & \fso(1,3) & \begin{array}{c} \langle h_{01}, e_{01}, f_{01}, i(f_{21} + e_{21}),\\
 (1+i)(f_{10} + e_{32}), (1+i)(f_{11} + e_{31}),\\
 (1-i)(f_{31} + e_{11}), (1-i)(f_{32} + e_{10}) \rangle \end{array} & \fsu(1,2)\\ \hline
 \widetilde\psi_{i} & \fso(1,3) & \begin{array}{c} \langle ih_{01}, f_{01} + e_{01}, i(f_{01} - e_{01}), f_{21} + e_{21},\\
 f_{10} + e_{32} + i(f_{11} + e_{31}), f_{11} + e_{31} + i(f_{10} + e_{32}),\\
 f_{31} + e_{11} - i(f_{32} + e_{10}),  f_{32} + e_{10} - i(f_{31} + e_{11}) \rangle \end{array} & \fsl(3,\bbR)\\ \hline
 \end{array}
 \]
 \caption{Infinitesimal holonomy for real forms of $\sfD.6_0$ models}
 \label{F:hol-R}
 \end{table}
 \end{footnotesize}
 
 \begin{theorem} Among $(2,3,5)$-structures of type $\sfD.6$, only the $\sfD.6_0$ structures have reduced holonomy, and real forms are equivalent to one of the following four models:
 \begin{align}
 \begin{array}{|c|c|c|c|cc} \hline
 \ff_\bbR & \ff^0_\bbR & \ff^{-1}_\bbR / \ff^0_\bbR & \hol\\ \hline\hline
 \fsl(2,\bbR) \times \fsl(2,\bbR) & \sfH_1 + \sfH_2 & \sfX_1 + \sfX_2, \,\, \sfY_1 - \sfY_2 & \fsl(3,\bbR)\\ \hline
 \fsl(2,\bbR) \times \fso(3) & \frac{\sfH}{2} + \frac{\sfX - \sfY}{\sqrt{2}} + \sfC & \frac{\sfH}{2} + \frac{\sfX}{\sqrt{2}} + \sfA,\,\, \frac{\sfH}{2} -\frac{\sfY}{\sqrt{2}} + \sfB & \fsu(1,2)\\ \hline
 \fso(1,3) & \sfH & \sfX + \sfI\sfX,\,\, \sfY + \sfI\sfY & \fsu(1,2)\\ \hline
 \fso(1,3) & \sfC & \sfA + \sfI\sfA,\,\, \sfB + \sfI\sfB & \fsl(3,\bbR)\\ \hline
 \end{array}
 \end{align}
 \end{theorem}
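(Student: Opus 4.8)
The plan is to split the statement into its two assertions and dispatch each by specialising results already established. For the reduced-holonomy claim I would simply invoke the complex holonomy computation from the preceding theorem: there $\hol = \fg$ for every $\sfD.6_a$ with $a \neq 0$, whereas $\hol \cong \fsl(3,\bbC)$, an $8$-dimensional proper subalgebra of the $14$-dimensional $\fg$, precisely when $a = 0$. Since $a^2$ is the classifying invariant (Theorem \ref{T:typeD}), ``reduced holonomy'' within the $\sfD.6$ family is therefore equivalent to $a = 0$, which is the first assertion. The only point worth noting is that the notion is well-defined on the $P$-equivalence class, which holds because $\hol \subset \fg$ is determined up to the $P$-action by $(\ff;\fg,\fp)$.

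For the second claim I would specialise the $\sfD.6$ anti-involution classification (Theorem \ref{T:D6AI} / Table \ref{F:D6AI}) to $a = 0$. At $a = 0$ the parameter lies simultaneously in $\bbR_{\geq 0}$ and $i\bbR_{\geq 0}$, and the redundancy $(a,\psi_\zeta,\widetilde\psi_\zeta) \stackrel{A_{-1}}{\mapsto} (-a,\psi_\zeta,\widetilde\psi_{-\zeta})$ recorded in Table \ref{F:AI} collapses $\widetilde\psi_{-1} \sim \widetilde\psi_1$ and $\widetilde\psi_{-i} \sim \widetilde\psi_i$ (as $A_{-1}$ fixes $a = 0$). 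Hence exactly four inequivalent anti-involutions survive, namely $\psi_1, \widetilde\psi_1, \psi_i, \widetilde\psi_i$, whose fixed point algebras $\ff^\psi$ are respectively $\fsl(2,\bbR) \times \fsl(2,\bbR)$, $\fsl(2,\bbR) \times \fso(3)$, $\fso(1,3)$, $\fso(1,3)$, giving the four Lie algebras in the table. It remains to re-express each in the familiar bases $\{ \sfH, \sfX, \sfY \}$ and $\{ \sfA, \sfB, \sfC \}$ and to record the isotropy data $\ff^0_\bbR$ and $\ff^{-1}_\bbR / \ff^0_\bbR$; for the two $\fso(1,3)$ rows this is exactly the $\alpha = 1$ (hence $a = 0$) specialisation of Theorem \ref{T:so13}, and the remaining two rows follow by the analogous change of basis applied to the $\psi_1$ and $\widetilde\psi_1$ fixed-point data.

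Finally I would compute $\hol$ for each real form. The key observation is that, since $\psi$ is an anti-involution of $(\ff;\fg,\fp)$, it preserves $\ff$ and the modified bracket, hence fixes $\kappa$ and therefore stabilises each subspace $\hol^i$ of \eqref{E:hol}; consequently the real holonomy of $\ff^\psi$ is $\hol \cap \fg^\psi$, a real form of the complex holonomy $\fsl(3,\bbC)$. Thus each real holonomy is one of $\fsl(3,\bbR)$, $\fsu(3)$, or $\fsu(1,2)$, and I would single out the correct one by computing the signature of the Killing form on the explicit $\bbR$-basis of $\hol$ displayed in Table \ref{F:hol-R}; this yields $\fsl(3,\bbR)$ for $\psi_1$ and $\widetilde\psi_i$, and $\fsu(1,2)$ for $\widetilde\psi_1$ and $\psi_i$, matching the final column (and in particular showing that the two $\fso(1,3)$-symmetric models are inequivalent, being distinguished by $\hol$). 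The main obstacle is bookkeeping rather than conceptual: carrying out the explicit change of basis into $\{\sfH,\sfX,\sfY\}$/$\{\sfA,\sfB,\sfC\}$ form and verifying the three Killing-form signatures; these are routine and machine-checkable once the reduction to $a = 0$ and the fixed-point description $\hol^\psi = \hol \cap \fg^\psi$ are in place.
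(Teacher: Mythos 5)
Your proposal is correct and follows essentially the same route as the paper: reduce the reduced-holonomy claim to $a=0$ via the complex computation and the fact that $\hol(\ff^\psi)=\hol^\psi$, enumerate the four inequivalent anti-involutions at $a=0$ from Table \ref{F:D6AI} (with $\widetilde\psi_{-1},\widetilde\psi_{-i}$ collapsing), identify the $\fso(1,3)$ rows as the $a=0$ specialisation of Theorem \ref{T:so13} and the other two rows by an explicit change of basis, and read off $\hol$ from the Killing-form signatures in Table \ref{F:hol-R}. The only difference is one of explicitness: the paper records the concrete complexification dictionary for the $\fsl(2,\bbR)\times\fso(3)$ case, which you defer to routine bookkeeping.
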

 
 \begin{proof} We have $a=0$ for the $\fso(1,3)$ cases from Theorem \ref{T:so13}.  For the first two cases, we match these with the complexified data in Table \ref{F:LieTh}.  It is immediate for the first case, while for the second case we set
 \begin{align}
 \begin{split}
 (\bH, \bX, \bY) &= (2i(\tfrac{\sfH}{2} + \tfrac{\sfX-\sfY}{\sqrt{2}}), \tfrac{\sfH}{2} + \tfrac{\sfX}{\sqrt{2}} + i(\tfrac{\sfH}{2} -\tfrac{\sfY}{\sqrt{2}}), \tfrac{\sfH}{2} + \tfrac{\sfX}{\sqrt{2}} - i(\tfrac{\sfH}{2} -\tfrac{\sfY}{\sqrt{2}}))\\
 (\bH', \bX', \bY') &= (2i\sfC, \sfA + i\sfB, \sfA - i\sfB)
 \end{split}
 \end{align}
 The holonomy is then deduced from Table \ref{F:hol-R}.  By Table \ref{F:D6AI}, the claimed list for $a=0$ is complete.
 \end{proof}
 
 The first two cases with reduced holonomy correspond to special rolling examples (two hyperbolic spaces or hyperbolic space on sphere) identified in \cite[Example 60]{SW2017a} and \cite[Example 17]{SW2017b}.  The reduced holonomy assertion for the $\fso(1,3)$ cases (Theorem \ref{T:so13} when $a=0$) appears to be new.  
 
 Any $(2,3,5)$-distribution $(M,\cD)$ admits a canonical {\sl Nurowski conformal structure} $\sfc_\cD$ \cite{Nur2005} and this is covariant for $\cD$, i.e.\ all symmetries of $\cD$ are conformal symmetries of $\sfc_\cD$.  The problem of determining the existence of an Einstein metric in $\sfc_\cD$ was studied in \cite{SW2017a, SW2017b}.
This is governed by the {\sl almost-Einstein equation}, which is a 2nd order linear PDE on a {\sl scale} $\sigma$.  (Given $g \in \sfc_\cD$, the PDE characterizes if $\sigma^{-2} g$ is Einstein.)  If $\sigma$ is a non-trivial solution (possibly with some vanishing), then it is called an {\sl almost-Einstein scale} and $\cD$ is then called an {\sl almost-Einstein} $(2,3,5)$-distribution.  From \cite[Thm.1 \& 2]{SW2017b}, almost-Einstein scales are in 1-1 correspondence with $\nabla^\cV$-parallel sections of the {\sl standard tractor bundle} $\cV := \cG \times_P \bbV$ (associated to the Cartan bundle $\cG$), where $\bbV$ is the standard 7-dimensional $G_2$-rep and $\nabla^\cV$ is the canonical linear connection induced from the Cartan connection $\omega$.  The curvature of $\nabla^\cV$ is precisely the Cartan curvature $\kappa$ viewed (via the rep $\fg \to \tEnd(\bbV)$) as an element of $\Omega^2(M;\tEnd(\cV)) = \Gamma(\cG \times_P (\bigwedge^2 (\fg/\fp)^* \otimes \tEnd(\bbV)))$.  This must annihilate any parallel section, and hence so too must the holonomy.
  
 Parallel sections are determined by their values in a single fibre, and holonomy cuts down on admissible values in each fibre.  On homogeneous spaces, it suffices by \cite[Prop.1.1]{GZ2021} to determine the $\hol$-annihilated elements $\bbV^{\hol} \subset \bbV$ using \eqref{E:g2rep}, e.g.\ $\bbV^\hol = \bbV$ for the flat model, $\bbV^\hol = 0$ for $\sfN.6$ and $\sfD.6_{a \neq 0}$, while $\bbV^\hol = \langle v_6, v_7 \rangle$ for $\sfN.7_c$, and $\bbV^\hol = \langle v_1 + v_7 \rangle$ for $\sfD.6_0$.  We conclude that:

 \begin{theorem}
 The (real forms of) $\sfN.6$ and $\sfD.6_{a \neq 0}$ models do not admit almost-Einstein scales.  The space of almost-Einstein scales is 7-dimensional for the flat model, 2-dimensional for (real forms of) $\sfN.7_c$ models and 1-dimensional for (real forms of) $\sfD.6_0$ models.
 \end{theorem}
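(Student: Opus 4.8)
The plan is to convert the analytic problem of counting almost-Einstein scales into a purely algebraic computation in the standard $7$-dimensional representation $\bbV$. By \cite[Thm.1 \& 2]{SW2017b}, almost-Einstein scales for $(M,\cD)$ are in bijection with $\nabla^\cV$-parallel sections of the standard tractor bundle $\cV = \cG \times_P \bbV$. Since every model under consideration is homogeneous, I would invoke \cite[Prop.1.1]{GZ2021} to reduce this count to the dimension of the space of holonomy-annihilated vectors $\bbV^\hol := \{ v \in \bbV : X \cdot v = 0,\ \forall X \in \hol\}$, where $\hol \subset \fg$ acts through the matrix realization \eqref{E:g2rep}. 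In this way the entire theorem reduces to computing $\dim \bbV^\hol$ for each holonomy algebra already determined.

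The two extreme cases are immediate. For the flat model, $\kappa = 0$ forces $\hol = 0$, so $\bbV^\hol = \bbV$ is $7$-dimensional. For $\sfN.6$ and $\sfD.6_{a \neq 0}$, the holonomy theorem gives $\hol = \fg$; as $\bbV$ is an irreducible $\fg$-module, $\bbV^\fg = 0$ and no almost-Einstein scales exist.

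For the remaining two cases I would compute the common kernel of the relevant generators directly from \eqref{E:g2rep}. For $\sfN.7_c$, where $\hol = \langle f_{01},f_{11},f_{21},f_{31},f_{32}\rangle$, inspecting the columns of the corresponding matrices shows that $v_6$ and $v_7$ are annihilated by all five generators (the only lower-triangular generator detecting $v_6$ is $f_{10}$, which lies outside $\hol$), whereas $v_5$ is already moved by $f_{01}$ and $f_{11}$; hence $\bbV^\hol = \langle v_6,v_7\rangle$ is $2$-dimensional. For $\sfD.6_0$, where $\hol \cong \fsl(3,\bbC)$ with the explicit $8$-element basis listed, solving the resulting linear system (conveniently in {\tt Maple}) yields the single invariant line $\langle v_1 + v_7\rangle$, so the scale space is $1$-dimensional.

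Finally, I would address the passage to real forms. For an anti-involution $\psi$ of $(\ff;\fg,\fp)$, the real holonomy is the $\psi$-fixed subalgebra of $\hol$ and the real standard representation is $\bbV^\psi$; since $\psi$ preserves the complex annihilator $\bbV^\hol$, one has $\dim_\bbR (\bbV^\hol)^\psi = \dim_\bbC \bbV^\hol$, so each real form inherits the dimension computed over $\bbC$. The only genuinely nonroutine step is verifying the two explicit kernel computations for $\sfN.7_c$ and $\sfD.6_0$, but these are elementary linear algebra once \eqref{E:g2rep} and the holonomy bases are in hand, and present no conceptual obstruction.
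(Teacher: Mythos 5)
Your proposal is correct and follows essentially the same route as the paper: the correspondence of \cite[Thm.1 \& 2]{SW2017b}, the reduction via \cite[Prop.1.1]{GZ2021} to computing $\bbV^{\hol}$ in the matrix realization \eqref{E:g2rep}, and the same case-by-case kernel computations yielding $\bbV$, $0$, $\langle v_6,v_7\rangle$ and $\langle v_1+v_7\rangle$. Your closing remark on real forms (that $\psi$ stabilizes $\bbV^{\hol}$, so the real annihilator is a real form of the complex one and the dimension count carries over) is a point the paper leaves implicit, and it is a worthwhile addition.
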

 
 Thus, our algebraic approach allowed us to recover known results \cite{Willse2014, Willse2018, SW2017a} and complete the $\sfD.6$ case.  The almost-Einstein equation belongs to the broad class of BGG equations, and we refer the reader to \cite{GZ2021} for more examples of these on various homogeneous parabolic geometries.

 \section*{Acknowledgements}
 We gratefully acknowledge the {\tt DifferentialGeometry} package in {\tt Maple}, and insightful discussions over the years with Robert Bryant, Boris Doubrov, Boris Kruglikov, Pawel Nurowski, Katja Sagerschnig, and Travis Willse. The research leading to these results has received funding from the Norwegian Financial Mechanism 2014-2021 (project registration number 2019/34/H/ST1/00636),  and the Troms\o{} Research Foundation (project ``Pure Mathematics in Norway'').

 \newpage

 \appendix
 \section{Cartan connection structure equations}
 \label{S:curvature}
 With the curvature module $\bbE \subset \ker(\partial^*)^1$ given explicitly in Table \ref{F:CurvMod}, we can write out the structure equations $d\omega = -\omega \wedge \omega + K$ for the associated regular, normal Cartan geometry $(\cG \to M, \omega)$.  This is not required for our purposes in this article, but we do so to illustrate how representation theory is used to derive, organize, and interpret such equations, and to facilitate comparisons with \cite{Car1910, Nur2005}.  (See an attachment in our {\tt arXiv} submission for {\tt Maple} code supporting this section.)
 
 First, write $\omega$ in the form \eqref{E:g2rep}, but with $z_i,a_{st},b_{st}$ replaced by 1-forms $\zeta_i,\theta_{st}, \pi_{st} \in \Omega^1(\cG)$.  These are then dual to the vector fields $\omega^{-1}(\sfZ_i), \omega^{-1}(f_{st}), \omega^{-1}(e_{st})$.  Next, use \eqref{E:KFconvert} to convert $\kappa$, valued in $\bbE$, from 2-chain form to 2-cochain form, and then identify corresponding contributions to the structure equations, e.g.\ $e_{10} \wedge e_{31} \otimes f_{01}$ becomes a multiple of $f_{10}^* \wedge f_{31}^* \otimes f_{01}$, and so in the $d\theta_{01}$ structure equation this contributes the $\theta_{10} \wedge \theta_{31}$ term (appearing in ${\bf L_{01}}$ below).  In this manner, we obtain the {\sl primary structure equations}:
 \begin{footnotesize}
 \begin{align}
 \begin{split} \label{E:Pstreq}
 d\pi_{32} &= -(3\zeta_1 + 2\zeta_2) \wedge \pi_{32} - \pi_{01} \wedge \pi_{31} - 3 \pi_{11} \wedge \pi_{21} + {\bf K_{32}}\\
 d\pi_{31} &= -(3 \zeta_1 + \zeta_2) \wedge \pi_{31} - \theta_{01} \wedge \pi_{32} + 3\pi_{10} \wedge \pi_{21} + {\bf K_{31}}\\
 d\pi_{21} &= -(2\zeta_1 + \zeta_2) \wedge \pi_{21} + \theta_{10} \wedge \pi_{31} - \theta_{11} \wedge \pi_{32} - 2 \pi_{10} \wedge \pi_{11} + {\bf K_{21}}\\
 d\pi_{11} &= -(\zeta_1 + \zeta_2) \wedge \pi_{11} - 2 \theta_{10} \wedge \pi_{21} + \theta_{21} \wedge \pi_{32} + \pi_{01} \wedge \pi_{10} + {\bf K_{11}}\\
 d\pi_{10} &= -\zeta_1 \wedge \pi_{10} + \theta_{01} \wedge \pi_{11} + 2 \theta_{11} \wedge \pi_{21} - \theta_{21} \wedge \pi_{31} + {\bf K_{10}}\\
 d\pi_{01} &= -\zeta_2 \wedge \pi_{01} - 3 \theta_{10} \wedge \pi_{11} + \theta_{31} \wedge \pi_{32} + {\bf K_{01}}\\
 d\zeta_1 &= -\theta_{01} \wedge \pi_{01} + 2 \theta_{10} \wedge \pi_{10} - \theta_{11} \wedge \pi_{11} + \theta_{21} \wedge \pi_{21} + \theta_{31} \wedge \pi_{31} + {\bf K_1}\\
 d\zeta_2 &= 2\theta_{01} \wedge \pi_{01} - 3 \theta_{10} \wedge \pi_{10} + 3 \theta_{11} \wedge \pi_{11} - \theta_{31} \wedge \pi_{31} + \theta_{32} \wedge \pi_{32} + {\bf K_2}\\
 d\theta_{01} &= \zeta_2 \wedge \theta_{01} - 3 \theta_{11} \wedge \pi_{10} + \theta_{32} \wedge \pi_{31} + {\bf L_{01}}\\
 d\theta_{10} &= \zeta_1 \wedge \theta_{10} + \theta_{11} \wedge \pi_{01} + 2 \theta_{21} \wedge \pi_{11} - \theta_{31} \wedge \pi_{21}\\
 d\theta_{11} &= (\zeta_1 + \zeta_2) \wedge \theta_{11} - \theta_{01} \wedge \theta_{10} - 2\theta_{21} \wedge \pi_{10} + \theta_{32} \wedge \pi_{21}\\
 d\theta_{21} &= (2\zeta_1 + \zeta_2) \wedge \theta_{21} + 2\theta_{10} \wedge \theta_{11} + \theta_{31} \wedge \pi_{10} - \theta_{32} \wedge \pi_{11}\\
 d\theta_{31} &= (3\zeta_1 + \zeta_2) \wedge \theta_{31} - 3\theta_{10} \wedge \theta_{21} - \theta_{32} \wedge \pi_{01}\\
 d\theta_{32} &= (3\zeta_1 + 2\zeta_2) \wedge \theta_{32} + \theta_{01} \wedge \theta_{31} + 3 \theta_{11} \wedge \theta_{21}
 \end{split}
 \end{align}
 \end{footnotesize}
 where the terms that comprise the curvature $K$ are
 \begin{footnotesize}
 \begin{align}
 \begin{split}
 {\bf K_{32}} &= \tfrac{1}{3} D_2 (\theta_{11} \wedge \theta_{31} - \theta_{10} \wedge \theta_{32}) - \tfrac{2}{3}  D_1 \theta_{10} \wedge \theta_{31} + E \theta_{21} \wedge \theta_{31} + \CD_2 \theta_{10} \wedge \theta_{31} + \CD_3 (\theta_{10} \wedge \theta_{32} - \theta_{11} \wedge \theta_{31}) \\
 &\qquad\qquad - \CD_4 \theta_{11} \wedge \theta_{32} - 2 \CE_2 \theta_{21} \wedge \theta_{31} - 2 \CE_3 \theta_{21} \wedge \theta_{32} + \CF_2 \theta_{31} \wedge \theta_{32}\\
 {\bf K_{31}} &= \tfrac{1}{3} D_1 (\theta_{10} \wedge \theta_{32} - \theta_{11} \wedge \theta_{31}) - \tfrac{2}{3} D_2 \theta_{11} \wedge \theta_{32} - E \theta_{21} \wedge \theta_{32} + \CD_1 \theta_{10} \wedge \theta_{31} + \CD_2 (\theta_{10} \wedge \theta_{32} - \theta_{11} \wedge \theta_{31}) \\
 &\qquad\qquad - \CD_3 \theta_{11} \wedge \theta_{32} - 2 \CE_1 \theta_{21} \wedge \theta_{31} - 2 \CE_2 \theta_{21} \wedge \theta_{32} + \CF_1 \theta_{31} \wedge \theta_{32}\\
 {\bf K_{21}} &= -C_1 \theta_{10} \wedge \theta_{31} - C_2 (\theta_{10} \wedge \theta_{32} - \theta_{11} \wedge \theta_{31}) + C_3 \theta_{11} \wedge \theta_{32} + 2 D_1 \theta_{21} \wedge \theta_{31} + 2 D_2 \theta_{21} \wedge \theta_{32} - E \theta_{31} \wedge \theta_{32}\\
 {\bf K_{11}} &= -B_2 \theta_{10} \wedge \theta_{31} + B_3 (\theta_{11} \wedge \theta_{31} - \theta_{10} \wedge \theta_{32}) + B_4 \theta_{11} \wedge \theta_{32} + 2 C_2 \theta_{21} \wedge \theta_{31} + 2 C_3 \theta_{21} \wedge \theta_{32} - D_2 \theta_{31} \wedge \theta_{32}\\
 {\bf K_{10}} &= B_1 \theta_{10} \wedge \theta_{31} + B_2 (\theta_{10} \wedge \theta_{32} - \theta_{11} \wedge \theta_{31}) - B_3 \theta_{11} \wedge \theta_{32} - 2 C_1 \theta_{21} \wedge \theta_{31} - 2 C_2 \theta_{21} \wedge \theta_{32} + D_1 \theta_{31} \wedge \theta_{32}\\
 {\bf K_{01}} &= -A_3\theta_{10} \wedge \theta_{31} + A_4(\theta_{11} \wedge \theta_{31} - \theta_{10} \wedge \theta_{32}) + A_5 \theta_{11} \wedge \theta_{32} + 2 B_3 \theta_{21} \wedge \theta_{31} + 2 B_4 \theta_{21} \wedge \theta_{32} - C_3 \theta_{31} \wedge \theta_{32}\\
 {\bf K_1} &= -A_2\theta_{10} \wedge \theta_{31} - A_3(\theta_{10} \wedge \theta_{32} - \theta_{11} \wedge \theta_{31}) + A_4 \theta_{11} \wedge \theta_{32} + 2 B_2 \theta_{21} \wedge \theta_{31} + 2 B_3 \theta_{21} \wedge \theta_{32} + C_2 \theta_{32} \wedge \theta_{31}\\
 {\bf K_2} &= 2A_2\theta_{10} \wedge \theta_{31} + 2A_3(\theta_{10} \wedge \theta_{32} - \theta_{11} \wedge \theta_{31}) - 2 A_4 \theta_{11} \wedge \theta_{32} - 4 B_2 \theta_{21} \wedge \theta_{31} - 4 B_3 \theta_{21} \wedge \theta_{32} - 2 C_2 \theta_{32} \wedge \theta_{31}\\
 {\bf L_{01}} &= A_1 \theta_{10} \wedge \theta_{31} + A_2 (\theta_{10} \wedge \theta_{32} - \theta_{11} \wedge \theta_{31}) - A_3 \theta_{11} \wedge \theta_{32} - 2 B_1 \theta_{21} \wedge \theta_{31} - 2 B_2 \theta_{21} \wedge \theta_{32} + C_1 \theta_{31} \wedge \theta_{32}
 \end{split}
 \end{align}
 \end{footnotesize}
 
The 24 coefficients $A_1,...,\CF_2$ are functions on $\cG$.  Their exterior derivatives are the {\sl secondary structure equations}, obtained by imposing integrability conditions $d^2=0$ on \eqref{E:Pstreq}, and wedging with {\sl semibasic} forms $\langle \theta_{10}, \theta_{11}, \theta_{21}, \theta_{31}, \theta_{32} \rangle$ for $\cG \to M$, e.g.\ the $d(C_1)$ equation (modulo a semibasic 1-form $\gamma_1$) is calculated from:
 \begin{align} \label{E:C1V}
 \begin{split}
 0 &= d(d\theta_{01}) \wedge \theta_{10} \wedge \theta_{11} \wedge \theta_{21} \\
 &= (d(C_1) + (6 \zeta_1 + 2 \zeta_2) C_1 + 2 \theta_{01} C_2 + 2 \pi_{11} B_1 + 2 \pi_{10} B_2 ) \wedge \theta_{10} \wedge \theta_{11} \wedge \theta_{21} \wedge \theta_{31} \wedge \theta_{32}.
 \end{split}
 \end{align}
 Such equations encode the vertical variation of curvature coefficients.
Since $\omega$ is a Cartan connection, then these can be alternatively obtained from $\fp$-equivariancy of $\kappa$, i.e.\ $\cL_{X^\dagger} \kappa = -\rho(X) \circ \kappa, \,\,\forall X \in \fp$, where $X^\dagger \in \fX(\cG)$ is the associated fundamental vertical vector field, and $\rho : \fp \to \fgl(\bbE)$ is the $\fp$-rep on $\bbE$.  Identifying $\{ A_1,..., \widetilde{F}_2 \}$ with linear coordinates on $\bbE$, we obtain the secondary structure equations below from the induced $\fp$-action on $\bbE^*$.   The action of $X^\dagger$ can be immediately read off, e.g.\ from $\pi_{01}$ coefficients,
 \begin{align} \label{E:vertvar}
 \cL_{e_{01}^\dagger} (A_1,A_2,A_3,A_4,A_5, B_1, B_2, \ldots) = (0,-A_1,-2A_2,-3A_3,-4A_4,0,-B_1, \ldots)
 \end{align}
  
 Our secondary structure equations are:
 \begin{footnotesize}
 \begin{align}
 \begin{split}
d(A_1) &= -( \hspace{0.6in} 4\zeta_1 A_1 \hspace{0.39in} + 4 \theta_{01} A_2 ) + \alpha_1\\
d(A_2) &= -( \,\,\, \pi_{01} A_1 + (4 \zeta_1 +\,\,\, \zeta_2) A_2 + 3 \theta_{01} A_3 ) + \alpha_2\\
d(A_3) &= -( 2 \pi_{01} A_2 + (4 \zeta_1 + 2 \zeta_2) A_3 + 2 \theta_{01} A_4 ) + \alpha_3\\
d(A_4) &= -( 3 \pi_{01} A_3 + (4 \zeta_1 + 3 \zeta_2) A_4 + \,\,\,\theta_{01} A_5 ) + \alpha_4\\
d(A_5) &= -(  4 \pi_{01} A_4 + (4 \zeta_1 + 4 \zeta_2) A_5 \hspace{0.545in} ) + \alpha_5\\
d(B_1) &= -( \hspace{0.56in} (5 \zeta_1 +\,\,\, \zeta_2) B_1 + 3\theta_{01} B_2 +\,\, \pi_{11} A_1 +\,\,\, \pi_{10} A_2 ) + \beta_1\\
d(B_2) &= -( \,\,\,\pi_{01} B_1 + (5 \zeta_1 + 2 \zeta_2) B_2 + 2 \theta_{01} B_3 +\,\, \pi_{11} A_2 +\,\,\, \pi_{10} A_3 ) + \beta_2\\
d(B_3) &= -( 2 \pi_{01} B_2 + (5 \zeta_1 + 3 \zeta_2) B_3 +\,\,\, \theta_{01} B_4 +\,\, \pi_{11} A_3 +\,\,\, \pi_{10} A_4 ) + \beta_3\\
d(B_4) &= -( 3 \pi_{01} B_3 + (5 \zeta_1 + 4 \zeta_2) B_4 \hspace{0.545in} +\,\, \pi_{11} A_4 +\,\,\, \pi_{10} A_5 ) + \beta_4\\
d(C_1) &= -( \hspace{0.555in} (6 \zeta_1 + 2 \zeta_2) C_1 + 2 \theta_{01} C_2 + 2 \pi_{11} B_1 + 2 \pi_{10} B_2 ) + \gamma_1\\
d(C_2) &= -( \,\,\,\pi_{01} C_1 + (6 \zeta_1 + 3 \zeta_2) C_2 +\,\,\, \theta_{01} C_3 + 2 \pi_{11} B_2 + 2 \pi_{10} B_3 ) + \gamma_2\\
d(C_3) &= -( 2 \pi_{01} C_2 + (6 \zeta_1 + 4 \zeta_2) C_3 \hspace{0.54in} + 2 \pi_{11} B_3 + 2 \pi_{10} B_4 ) + \gamma_3\\
d(D_1) &= -( \hspace{0.555in} (7 \zeta_1 + 3 \zeta_2) D_1 + \,\,\theta_{01} D_2 + 3 \pi_{11} C_1 + 3 \pi_{10} C_2 ) + \delta_1\\
d(D_2) &= -( \,\,\pi_{01} D_1 + (7 \zeta_1 + 4 \zeta_2) D_2 \hspace{0.54in} + 3 \pi_{11} C_2 + 3 \pi_{10} C_3 ) + \delta_2\\
d(E) &= -(  \hspace{0.55in} (8 \zeta_1 + 4 \zeta_2) E \hspace{0.59in} + 4 \pi_{11} D_1 + 4 \pi_{10} D_2 ) + \epsilon\\ \\ \hline\\
 d(\CD_1) &= -( \hspace{0.56in} (7 \zeta_1 + 2 \zeta_2) \CD_1 + 3 \theta_{01} \CD_2 \hspace{1.76in} +\,\,\, 3 \pi_{10} C_1 + 3 \pi_{21} B_1 -\,\,\, \pi_{32} A_1 +\,\,\, \pi_{31}A_2 ) + \widetilde\delta_1\\
 d(\CD_2) &= -( \,\,\,\pi_{01} \CD_1 + (7 \zeta_1 + 3 \zeta_2) \CD_2 + 2 \theta_{01} \CD_3 \hspace{1.14in} -\,\,\,\,\,\, \pi_{11} C_1 +\,\,\, 2 \pi_{10} C_2 + 3 \pi_{21} B_2 -\,\,\, \pi_{32} A_2 +\,\,\, \pi_{31} A_3) + \widetilde\delta_2\\
 d(\CD_3) &= -( 2 \pi_{01} \CD_2 + (7 \zeta_1 + 4 \zeta_2) \CD_3 + \,\,\,\theta_{01} \CD_4 \hspace{1.14in} -\,\,\, 2 \pi_{11} C_2 + \,\,\,\,\,\, \pi_{10} C_3 + 3 \pi_{21} B_3 -\,\,\, \pi_{32} A_3 +\,\,\, \pi_{31} A_4 ) + \widetilde\delta_3\\
 d(\CD_4) &= -(  3 \pi_{01} \CD_3 + (7 \zeta_1 + 5 \zeta_2) \CD_4 \hspace{1.69in} -\,\,\, 3 \pi_{11} C_3 \hspace{0.63in} + 3 \pi_{21} B_4 -\,\,\, \pi_{32} A_4 +\,\,\, \pi_{31} A_5 ) + \widetilde\delta_4\\
 d(\CE_1) &= -( \hspace{0.56in} (8 \zeta_1 + 3 \zeta_2) \CE_1 +\, 2 \theta_{01} \CE_2 +\,\,\, \pi_{11} \CD_1 +\,\,\, \pi_{10} \CD_2 \hspace{0.63in} + \tfrac{10}{3} \pi_{10} D_1 + 3 \pi_{21} C_1 -\,\,\, \pi_{32} B_1 +\,\,\, \pi_{31} B_2 ) + \widetilde\epsilon_1\\
d(\CE_2) &= -( \,\,\,\pi_{01} \CE_1 + (8 \zeta_1 + 4 \zeta_2) \CE_2 + \,\,\,\, \theta_{01} \CE_3 +\,\,\, \pi_{11} \CD_2 +\,\,\, \pi_{10} \CD_3 - \,\, \tfrac{5}{3} \pi_{11} D_1 +\,\,\, \tfrac{5}{3} \pi_{10} D_2 + 3 \pi_{21} C_2 -\,\,\, \pi_{32} B_2 +\,\,\, \pi_{31} B_3 ) + \widetilde\epsilon_2\\
d(\CE_3) &= -( 2 \pi_{01} \CE_2 + (8 \zeta_1 + 5 \zeta_2) \CE_3 \hspace{0.56in} +\,\,\, \pi_{11} \CD_3 +\,\,\, \pi_{10} \CD_4 - \tfrac{10}{3}  \pi_{11} D_2 \hspace{0.64in} + 3 \pi_{21} C_3 -\,\,\, \pi_{32} B_3 +\,\,\, \pi_{31} B_4 ) + \widetilde\epsilon_3\\
 d(\CF_1) &= -( \hspace{0.56in} (9 \zeta_1 + 4 \zeta_2) \CF_1 \,+\,\,\, \theta_{01} \CF_2 \,+ 2 \pi_{11} \CE_1 + 2 \pi_{10} \CE_2 \,\hspace{0.64in} + \,\,\,4 \pi_{10} E \,\,\,+ 3 \pi_{21} D_1 -\,\, \pi_{32} C_1 \,+\,\,\, \pi_{31}C_2 ) + \widetilde\phi_1\\
 d(\CF_2) &= -( \,\,\,\,\pi_{01} \CF_1 + (9 \zeta_1 + 5 \zeta_2) \CF_2 \hspace{0.57in} + 2 \pi_{11} \CE_2 + 2 \pi_{10} \CE_3 \,-\,\,\, 4 \pi_{11} E \hspace{0.7in} + 3 \pi_{21} D_2 -\,\, \pi_{32} C_2 \,+\,\,\, \pi_{31} C_3 ) + \widetilde\phi_2
 \end{split}
 \end{align}
 \end{footnotesize}
 Here, $\alpha_1, ..., \widetilde\phi_2$ are 24 semibasic 1-forms, i.e.\ each is a linear combination of $\theta_{10}, \theta_{11}, \theta_{21}, \theta_{31}, \theta_{32}$.

Let us make the $\fp$-action on $\bbE / \bbE^5$ and $\bbE / \widetilde\bbE$ more concrete.  First, take the basis $(\sfy,\sfx,\sfz,\sfw,\sfv) = (e_{10}, -e_{11}, -2e_{21}, 3e_{31}, 3e_{32})$ on $\fp_+$.  The $\fp$-action on $\fp_+$ is via:
 \begin{align} \label{E:pp+}
 \begin{array}{l@{\,}l} 
 e_{01} &\,\,\leftrightarrow\,\, \sfx\partial_\sfy + \sfv\partial_\sfw,\\
 f_{01} &\,\,\leftrightarrow\,\, \sfy\partial_\sfx + \sfw\partial_\sfv,\\
 h_{01} &\,\,\leftrightarrow\,\, \sfx\partial_\sfx - \sfy\partial_\sfy + \sfv\partial_\sfv - \sfw\partial_\sfw,\\
 \sfZ_1 &\,\,\leftrightarrow\,\, \sfx\partial_{\sfx} + \sfy\partial_{\sfy} + 2\sfz\partial_{\sfz} + 3\sfv\partial_\sfv + 3\sfw\partial_\sfw,\\
 \end{array} \qquad
 \begin{array}{l@{\,}l}
 e_{10} &\,\,\leftrightarrow\,\, \sfz \partial_\sfx + 2\sfw \partial_\sfz,\\
 e_{11} &\,\,\leftrightarrow\,\, \sfz\partial_\sfy - 2\sfv\partial_\sfz,\\
 e_{21} &\,\,\leftrightarrow\,\, \sfv \partial_\sfx + \sfw \partial_\sfy,\\
 e_{31} &\,\,\leftrightarrow\,\, 0, \quad
 e_{32} \,\,\leftrightarrow\,\, 0.
 \end{array}
 \end{align}
 Then as $\fp$-modules, we have:
  \begin{enumerate}
 \item $\bbE / \bbE^5 \cong H_2(\fp_+,\fg)$ is identified with binary quartics
 \begin{align} \label{E:F}
 \sfF(\sfx,\sfy) &= A_1 \sfy^4 + 4 A_2 \sfx\sfy^3 + 6 A_3 \sfx^2 \sfy^2 + 4 A_4 \sfx^3\sfy + A_5 \sfx^4 \qquad \mod \sfz, \sfv, \sfw.
 \end{align}
 \item $\bbE / \widetilde\bbE$ is identified with {\em ternary} quartics
 \begin{align} \label{E:G}
 \begin{split}
 \sfG(\sfx,\sfy,\sfz) &= A_1 \sfy^4 + 4 A_2 \sfx\sfy^3 + 6 A_3 \sfx^2 \sfy^2 + 4 A_4 \sfx^3\sfy + A_5 \sfx^4\\
 & \qquad + 4(B_1 \sfy^3 + 3 B_2 \sfx\sfy^2 + 3 B_3 \sfx^2\sfy + B_4 \sfx^3) \sfz\\
 & \qquad + 6(C_1 \sfy^2 + 2 C_2 \sfx\sfy + C_3 \sfx^2) \sfz^2\\
 & \qquad + 4(D_1 \sfy + D_2 \sfx) \sfz^3\\
 & \qquad + E \sfz^4 \hspace{2.3in} \mod \sfv, \sfw
 \end{split}
 \end{align}
 \end{enumerate}
 
 The numerical coefficients have been carefully introduced so that $A_1,A_2,...$ transform (under the action induced from \eqref{E:pp+}) in the same way as the corresponding functions on $\cG$, c.f. \eqref{E:vertvar}.  Interpreted with the latter coefficient functions, \eqref{E:F} and \eqref{E:G} are the covariant tensors discovered by Cartan in \cite[para.33]{Car1910}.
 
 \section{From Lie-theoretic to Cartan-theoretic descriptions}
 \label{S:CLieTh}
 We summarize conversions from Lie-theoretic to Cartan-theoretic descriptions, c.f. Tables \ref{F:alg-models} and \ref{F:Fstr}, for {\em non-flat} (complex) multiply-transitive $(2,3,5)$-geometries.  Recall that the definition of the former (around \eqref{E:LieTh}) and non-flatness implies $\ff^1 = \ff^2 = \ff^3 = 0$ by Corollary \ref{C:PR}.  We will specify an {\sl adapted filtered basis}, i.e.\ a basis of $\ff^0$ (using $v_0$ and/or $w_0$) and 
  $\ff^{-1} / \ff^0 = \langle v_1, v_2 \rangle$, $\ff^{-2} / \ff^{-1} = \langle v_3 \rangle$, $\ff^{-3} / \ff^{-2} = \langle v_4, v_5 \rangle$.

Coordinate models (Table \ref{F:MongeModels}) can be matched with Lie-theoretic descriptions in a similar manner as in Example \ref{X:N6}.  See also the symmetries provided in \cite[Tables 3 \& 4]{Willse2019}.

 \begin{tiny}
 \begin{table}[h]
 \[
 \begin{array}{|l|l|c|l|} \hline
 \multicolumn{2}{|c}{\mbox{Lie-theoretic description}} &  \multicolumn{2}{|c|}{\mbox{Conversion to Cartan-theoretic description}}\\ \hline
 \mbox{Lie algebra $\ff$} & \mbox{Adapted filtered basis} & \mbox{Classification} & \mbox{Dictionary to our basis in Tables \ref{F:alg-models} and \ref{F:Fstr}} \\ \hline\hline
 \begin{array}{@{}l@{\,}l@{}}
 \multicolumn{2}{@{}c@{}}{\ff = \bbC^2 \ltimes \mathfrak{heis}_5}\\
 \begin{array}{@{}l@{}}
 [\bN_1,\bN_3] = \bN_5, \\ \\{}
 [\bT_1,\bN_1] = -\bN_1, \\ {}
 [\bT_1,\bN_2] = -\bN_2, \\ {}
 [\bT_1,\bN_3] = -\bN_3, \\ {}
 [\bT_1,\bN_4] = -\bN_4, \\ {}
 [\bT_1,\bN_5] = -2\bN_5, \\ {}
 \end{array} & 
 \begin{array}{@{}l@{}}
 [\bN_2,\bN_4] = \bN_5, \\ \\{}
 [\bT_2,\bN_2] = -m\bN_2, \\ {}
 [\bT_2,\bN_3] = -\bN_3, \\ {}
 [\bT_2,\bN_4] = (m-1)\bN_4, \\ {}
 [\bT_2,\bN_5] = -\bN_5,\\ \\\\
 \end{array}
 \end{array} & 
 \begin{array}{@{}l@{}}
 v_0 = \bT_1,\\
 \!w_0 = \frac{1}{m-1} \bN_1 + \frac{1}{2 m-1} \bN_2 \\
 \hspace{0.3in} + \frac{1}{m} \bN_3 + \frac{1}{m(m-1)} \bN_4, \\ 
 v_1 = \bT_2 - \frac{1}{2} \bT_1,\\
 v_2 = \bN_1 + \bN_2 + \bN_3,\\
 v_3 = - m \bN_1 + (1-m) \bN_3,\\
 v_4 = m^2 \bN_2 + \bN_3,\\
 v_5 = \bN_5
 \end{array} & 
 \begin{array}{@{}c@{}}
 \sfN.7_c\\
 c^2 = -\frac{2(k^2+1)^2}{3(k^2-9)(k^2-\frac{1}{9})}\\ \\
 (k \in \bbC \backslash \{ \pm \tfrac{1}{3}, \pm 1,\pm 3 \},\\
 m := \tfrac{k+1}{2})\\
 \end{array}
 &
 \begin{array}{@{}l@{}}
 \,\,\,T = v_0, \quad 
 N = w_0, \quad
 X_1 = \lambda v_1\\
 X_2 = \lambda \left(v_2 - (m-\frac{1}{2}) w_0\right),\\
 X_3 = \frac{1}{2}\lambda^2\left(v_3+ \frac{1}{10}(7m^2-7m+1) w_0\right),
\\
 X_4 = \frac{1}{6} \lambda^3 \left\{ -v_4 + (m+\frac{1}{2}) v_3 \right.\\
 \hspace{0.5in} \left. + \frac{1}{10}(7m^2 + 3m + 1) v_2 \right.\\
 \hspace{0.5in} + \left. \frac{1}{10}(m-\frac{1}{2})(3m^2 - 3m - 1)w_0\right\},
\\
 X_5 = -\frac{1}{6}\lambda^3 v_5 \\
 \quad \lambda^4 = -\frac{600}{(m+1)(m-2)(3m-1)(3m-2)}, \\
 \quad c = \frac{\lambda^2}{20} (2 m^2 - 2 m+1)
 \end{array} \\ \hline 
  \begin{array}{@{}ll@{}}
 \multicolumn{2}{@{}c@{}}{\ff = \bbC^2 \ltimes \mathfrak{heis}_5}\\
 \begin{array}{@{}l@{}}
 [\bN_1,\bN_3] = \bN_5, \\ \\{}
 [\bS,\bN_1] = -\bN_1, \\ {}
 [\bS,\bN_2] = -\bN_2, \\ {}
 [\bS,\bN_3] = -\bN_3, \\ {}
 [\bS,\bN_4] = -\bN_4, \\ {}
 [\bS,\bN_5] = -2\bN_5, \\ {}
 \end{array} & 
 \begin{array}{@{}l@{}}
 [\bN_2,\bN_4] = \bN_5, \\ \\{}
 [\bT,\bN_1] = -\bN_2, \\ {}
 [\bT,\bN_3] = -\bN_3, \\ {}
 [\bT,\bN_4] = \bN_3 - \bN_4, \\ {}
 [\bT,\bN_5] = -\bN_5\\ \\\\
 \end{array}
 \end{array} & 
 \begin{array}{@{}l@{}}
 v_0 = \bS,\\
 \!w_0 = \bN_1 + \bN_2 + \bN_3 + \bN_4,\\
 v_1 = \bT - \frac{1}{2} \bS,\\
 v_2 = \bN_1 + \bN_3,\\
 v_3 = \bN_2 + \bN_3,\\
 v_4 = \bN_5,\\
 v_5 = \bN_2
 \end{array} & 
 \begin{array}{c} 
 \sfN.7_c\\
 c^2 = \frac{3}{8}
 \end{array} & 
 \begin{array}{@{}l@{}}
 \,\,\,T =  v_0, \quad N = w_0, \quad
 X_1 = \lambda v_1, \\
 X_2 = -\lambda v_2 + \frac{\lambda}{2}  w_0, \\
 X_3 = -\frac{\lambda^2}{2} v_3 + \frac{\lambda^2}{20} w_0,\\
 X_4 = -\frac{\lambda^3}{60} v_2+\frac{\lambda^3}{12} v_3 - \frac{\lambda^3}{6}  v_5 + \frac{\lambda^3}{120} w_0,\\
 X_5 = -\frac{\lambda^3}{6} v_4, \\
 \quad \lambda^4 = 150, \,\, c = \frac{\lambda^2}{20}
 \end{array} \\ \hline
 \begin{array}{@{}ll@{}}
 \multicolumn{2}{@{}c@{}}{\ff = \fsl(2,\bbC) \ltimes \mathfrak{heis}_3}\\
 \begin{array}{@{}l@{}}
 [\bH,\bX] = 2\bX, \\ {}
 [\bH,\bY] = -2\bY, \\ {}
 [\bX,\bY] = \bH,
 \end{array} & 
 \begin{array}{@{}l@{}}
 [\bH,\bS] = \bS, \\ {}
 [\bH,\bT] = -\bT, \\ {}
 [\bX,\bT] = \bS, \\ {}
 [\bY,\bS] = \bT, \\ {}
 [\bS,\bT] = \bU
 \end{array}
 \end{array} & 
 \begin{array}{@{}l@{}}
 v_0 = \bX,\\
 v_1 = \bY - \bS - \bU,\\
 v_2 = \bH,\\
 v_3 = 3\bS + 2\bU,\\
 v_4 = \bT,\\
 v_5 = \bU
 \end{array} & \sfN.6 & 
 \begin{array}{@{}l@{}}
 \,\,N = \frac{i \sqrt{2}}{4} v_0\\
 X_1 = -2i\sqrt{2} v_1 + 3i\sqrt{2}
 v_0\\
 X_2 = v_2\\
 X_3 = i \sqrt{2} v_3 + \frac{15 i \sqrt{2}}{4} 
 v_0\\
 X_4 = 4 v_4 - v_2\\
 X_5 = \frac{2i \sqrt{2}}{3} v_5 - \frac{i\sqrt{2}}{3} v_3 - \frac{i\sqrt{2}}{4} v_0
 \end{array}\\ \hline
 \begin{array}{@{}ll@{}}
 \multicolumn{2}{@{}c@{}}{\ff = \fsl(2,\bbC) \times \fsl(2,\bbC)}\\
 \begin{array}{@{}l@{}}
 [\bH,\bX] = 2\bX, \\ {}
 [\bH,\bY] = -2\bY, \\ {}
 [\bX,\bY] = \bH,
 \end{array} & 
 \begin{array}{@{}l@{}}
 [\bH',\bX'] = 2\bX', \\ {}
 [\bH',\bY'] = -2\bY', \\ {}
 [\bX',\bY'] = \bH'
 \end{array}
 \end{array} & 
 \begin{array}{@{}l@{}}
 v_0 = \bH + \bH',\\
 v_1 = \bX - \lambda \bX',\\
 v_2 = \bY - \bY',\\
 v_3 = \bH + \lambda \bH',\\
 v_4 = \bX - \lambda^2 \bX',\\
 v_5 = \bY - \lambda \bY'
 \end{array} & 
 \begin{array}{@{}c@{}}
 \sfD.6_a\\
 a^2 = \tfrac{4(\lambda+1)^2}{(\lambda-9)\left(\lambda-\tfrac{1}{9}\right)}\\ \\
  (\lambda \in \bbC \backslash \{ 0,\frac{1}{9}, 1, 9 \})
 \end{array} & 
 \begin{array}{@{}l|l}
 \begin{array}{@{}l@{}}
 \underline{\lambda \neq -1, 0, \frac{1}{9}, 1, 9}:\\
 \,\,\,T = \frac{v_0}{2}\\
 X_1 = v_1\\
 X_2 = \tfrac{5a}{(\lambda+1)} v_2\\
 X_3 = -\tfrac{5a}{2(\lambda+1)} v_3 + \frac{3a}{2} v_0\\
 X_4 = \tfrac{5 a}{3(\lambda+1)} v_4 -\frac{7a}{6} v_1\\
 X_5 = \tfrac{25a^2}{3 (\lambda+1)^2} v_5 - \tfrac{35a^2}{6 (\lambda+1)} v_2
 \end{array} &
 \begin{array}{@{}l@{}}
 \underline{\lambda = -1}:\\
 \,\,\,T = \frac{v_0}{2}\\
 X_1 = v_1\\
 X_2 = 3 v_2\\
 X_3 = -\frac{3}{2} v_3\\
 X_4 = v_4\\
 X_5 = 3 v_5
 \end{array}
 \end{array}
 \\ \hline
 \begin{array}{@{}ll@{}}
 \multicolumn{2}{@{}c@{}}{\ff = \fsl(2,\bbC) \times \fe(2)}\\
 \begin{array}{@{}l@{}}
 [\bH,\bX] = 2\bX, \\ {}
 [\bH,\bY] = -2\bY, \\ {}
 [\bX,\bY] = \bH 
 \end{array}
 &
 \begin{array}{@{}l@{}}
 [\bZ,\bV_1] = \bV_1, \\ {}
 [\bZ,\bV_2] = -\bV_2, \\ {}
 \end{array}
 \end{array} & 
 \begin{array}{@{}l@{}}
 v_0 = \frac{1}{2}\bH + \bZ,\\
 v_1 = \bX + \bV_1,\\
 v_2 = \bY + \bV_2,\\
 v_3 = \bH,\\
 v_4 = \bX,\\
 v_5 = \bY
 \end{array} & 
 \begin{array}{c}
 \sfD.6_a\\ a^2=4
 \end{array} & 
 \begin{array}{@{}l@{}}
 \,\,\,T = v_0\\
 X_1 = v_1\\
 X_2 = 5 a v_2\\
 X_3 = -\frac{5}{2} a v_3 + \frac{3}{2} a v_0\\
 X_4 = \frac{5}{3} a v_4 - \frac{7}{6} a v_1\\
 X_5 = \frac{100}{3} v_5 - \frac{70}{3} v_2
 \end{array}\\ \hline
 \begin{array}{@{}l@{}l}
 \multicolumn{2}{@{}c@{}}{\ff = \fe(3)}\\
 \begin{array}{@{}ll@{}}
 [\bR_1,\bR_2] = \bR_3,\\{}
 [\bR_2,\bR_3] = \bR_1,\\{}
 [\bR_3,\bR_1] = \bR_2,
 \end{array} & 
 \begin{array}{l@{}}
 [\bR_1,\bV_2] = \bV_3\\ {}
 [\bR_1,\bV_3] = -\bV_3, \\ {}
 [\bR_2,\bV_3] = \bV_1\\ {}
 [\bR_2,\bV_1] = -\bV_3, \\ {}
 [\bR_3,\bV_1] = \bV_2\\ {}
 [\bR_3,\bV_2] = -\bV_1
 \end{array}
 \end{array}
 & 
 \begin{array}{@{}l@{}}
 v_0 = \bR_1,\\
 v_1 = \bR_2 + \bV_2,\\
 v_2 = \bR_3 + \bV_3,\\
 v_3 = \bV_1,\\
 v_4 = \bV_2,\\
 v_5 = \bV_3
 \end{array} & 
 \begin{array}{c}
 \sfD.6_a\\ a^2=-\frac{9}{4}
 \end{array} & 
 \begin{array}{@{}l@{}}
 \,\,\,T = i v_0\\
 X_1 = \frac{3}{2} (v_1 + i v_2)\\
 X_2 = \frac{5i}{2} (v_1 - i v_2)\\
 X_3 = -\frac{15}{2} v_3 + \frac{3i}{2} i v_0\\
 X_4 = -\frac{15 i}{4} (v_4 + i v_5) + \frac{3i}{4} (v_1 + i v_2)\\
 X_5 = \frac{25}{4} (v_4 - i v_5) - \frac{5}{4} (v_1 - i v_2)\\
 \end{array} \\ \hline
 \end{array}
 \]
 \caption{From Lie-theoretic to Cartan-theoretic descriptions}
 \label{F:LieTh}
 \end{table}
 \end{tiny}


 \end{document}